\documentclass[a4paper,12pt]{article}
\usepackage[utf8]{inputenc}
\usepackage[english]{babel}
\usepackage[margin=1.1in]{geometry} 
\usepackage[T1]{fontenc}
\usepackage{tgtermes}
\usepackage{amsthm}
\usepackage{amsmath}
\usepackage{bm}
\usepackage{enumitem}
\usepackage{amsfonts}
\usepackage{amssymb}
\usepackage{mathtools}
\usepackage{appendix}
\usepackage{mathrsfs}
\usepackage{setspace}
\usepackage{comment}
\usepackage{xcolor}
\usepackage{todonotes}
\usepackage{thmtools} 
\usepackage[pdfdisplaydoctitle,colorlinks,breaklinks,urlcolor=blue,linkcolor=blue,citecolor=blue]{hyperref} 
\usepackage[nameinlink,capitalise]{cleveref}

\newcommand{\B}{\mathcal{B}}
\newcommand{\C}{\mathbb{C}}

\newenvironment{acknowledgements}{%
  \begin{abstract}
}{%
  \end{abstract}
}

\newcommand{\EE}{\mathbb{E}}
\newcommand{\F}{\mathcal{F}}

\renewcommand{\L}{\mathscr{L}}
\newcommand{\LL}{\mathcal{L}}
\newcommand{\M}{\mathcal{M}}
\newcommand{\N}{\mathbb{N}}

\newcommand{\PP}{\mathbb{P}}

\newcommand{\R}{\mathbb{R}}

\newcommand{\T}{\mathbb{T}}
\newcommand{\U}{\mathcal{U}}

\newcommand{\Z}{\mathbb{Z}}

\let\div\relax

\newcommand{\div}{\operatorname{div}}

\renewcommand{\epsilon}{\varepsilon}
\renewcommand{\setminus}{\smallsetminus}
\newcommand{\eps}{\epsilon}
\renewcommand{\det}{{\operatorname{det}}}

\newcommand{\one}{\bm{1}}

\newcommand{\norm}[1]{\left\|#1\right\|}
\newcommand{\brak}[1]{\left\langle#1\right\rangle}

\newcommand{\expt}[2][]{\mathbb{E}_{#1}\Big[#2\Big]}

\newtheorem{theorem}{Theorem}[section]
\newtheorem{definition}[theorem]{Definition}
\newtheorem{hypothesis}[theorem]{Hypothesis}
\newtheorem{corollary}[theorem]{Corollary}
\newtheorem{lemma}[theorem]{Lemma}
\newtheorem{proposition}[theorem]{Proposition}

\theoremstyle{remark}
\newtheorem{remark}[theorem]{Remark}

\numberwithin{equation}{section}

\author{
  Antonio Agresti\thanks{Department of Mathematics Guido Castelnuovo, 
  Sapienza University of Rome, P.le Aldo Moro 5, 00185 Rome, Italy.
  \texttt{antonio.agresti@uniroma1.it}}
  \and
  Federico Butori\thanks{Scuola Normale Superiore, Piazza dei Cavalieri 7, Pisa, Italy.
  \texttt{federico.butori@sns.it}}
  \and
  Eliseo Luongo\thanks{Fakultät für Mathematik, Universität Bielefeld, 33501 Bielefeld, Germany.
  \texttt{eluongo@math.uni-bielefeld.de}}
}
\title{Global smooth solutions by high mode Lie-Transport noise for Logarithmically Hyperdissipative Navier-Stokes equations}

\allowdisplaybreaks

\begin{document}
\maketitle
\begin{abstract}
We study a logarithmically hyperviscous Navier–Stokes model on the three-dimen\-sional torus with Lie-transport noise, which includes both transport and stretching. We prove that, for noise of sufficiently large intensity and high frequency, the system admits a unique global smooth solution with probability arbitrarily close to one. Unlike previous works, this physically motivated noise does not preserve energy or enstrophy, but rather circulation. 
Global well-posedness is established through a probabilistic mechanism that produces effective dissipation via a scaling limit. Crucially, this approach bypasses the lack of conserved quantities and tames the singular nature of stochastic stretching.
\end{abstract}

\tableofcontents

\section{Introduction}\label{sec:intro}
Starting from the seminal paper \cite{galeati_convergence_2020}, it is nowadays well-known that high-modes, transport-type noise, has enhancing dissipating properties, strongly related to its mixing features, see e.g., \cite{flandoli_quantitative_2024,HMPZZ25, LuoMixing} and the references therein. This led in \cite{flandoli_high_2021} to the suppression of potential vorticity blow-up for the 3D stochastic Navier-Stokes equations, namely the system 
\begin{equation}\label{eq_flandoli_luo}\tag{SNSTN}
    \begin{cases}
      \mathrm{d}\omega+{\Pi}\left(\circ\, \mathrm{d}W \cdot \nabla \omega\right)=\left(\nu \Delta \omega-u\cdot\nabla \omega-\omega\cdot\nabla u\right)\mathrm{d}t,\\
       u=K[\omega], 
    \end{cases}
\end{equation}
with high probability\footnote{The symbol $K$ stands for the Biot-Savart Kernel, while $\Pi$ corresponds to the Leray projection. The latter has to be included in the system above, since in principle the term $W \cdot \nabla \omega$ is not divergence-free, contrary to the others.}. In the above, the noise $W$ is required to have sufficiently large intensity and sufficiently high spectrum. While conceptually striking, this result relies on a noise structure that is not physically motivated: it preserves enstrophy and does not capture vortex stretching, that is intrinsic to three-dimensional fluid dynamics, see the foundational work \cite{holm2015variational}, as well as the discussions in \cite{cotter2020data,crisan2023implementation}, \cite[Section 1.2]{flandoli_high_2021}, \cite[Chapter 5]{flandoli_stochastic_2023}. 
Extending the results of \cite{flandoli_high_2021} to the physically motivated stochastic model of \cite{holm2015variational}:
\begin{equation}\label{eq_holm}\tag{SNS}
   \begin{cases}
      \mathrm{d}\omega-\omega\cdot\nabla u+\circ\, \mathrm{d} W \cdot \nabla \omega-\omega\cdot \circ \mathrm{d} \nabla W=\left(\nu \Delta \omega-u\cdot\nabla \omega-\omega\cdot\nabla u\right)\mathrm{d}t,\\
       u=K[\omega],
   \end{cases}
\end{equation}
 and its hyperviscous variants have proven to be extremely challenging. Existing regularization by noise results mainly concerns noise structures that preserve key quantities (e.g., energy or enstrophy) in fluid dynamics, thereby leading to a more tractable analytical setting. In this paper, we prove, for the first time, regularization by noise for a logarithmically hyperviscous Navier–Stokes model with transport and stretching noise, also called Lie-Transport noise, having the same scaling as the 3D Navier-Stokes equations \eqref{eq_holm}. Before proceeding with the presentation of our contribution, we briefly discuss the challenges that Lie-Transport noise poses.

\smallskip 
 
The difficulties in treating systems like \eqref{eq_holm} underscore a recurring theme: while noise can have a stabilizing effect, like in \eqref{eq_flandoli_luo}, its precise structure and interaction with nonlinear mechanisms are decisive, and physically realistic perturbations may either regularize or destabilize the dynamics. Indeed, \cite[Appendix~2]{flandoli_high_2021} shows that the stochastic stretching term seems to become singular under the scaling limit of \cite{galeati_convergence_2020}. This also happens for linear variants of \eqref{eq_holm}, see for example \cite[Chapter 3]{flandoli_stochastic_2023}, and seems related to the fact that high intensity isotropic vector fields, like the ones considered in the scaling limit of \cite{galeati_convergence_2020}, can produce negative eddy viscosity phenomena, cf. \cite{frisch1991, gama1994negative,lanotte1999large}.

In \cite{agresti_global_2024}, the first author took a step forward in understanding the stabilizing properties of transport-stretching noise by establishing global well-posedness with high probability for the following hyperviscous Navier-Stokes equations with transport noise:
\begin{align}\label{eq_antonio}\tag{SHNS}
\begin{cases}
\mathrm{d} u+\big[ (-\Delta)^{\gamma}u+\Pi(u\cdot\nabla u)\big]\mathrm{d}t=\Pi\bigl(\circ \mathrm{d}{W}\cdot \nabla u\bigr),\\
\operatorname{div}u=0,
\end{cases}
\end{align}
where $\gamma>1$ is arbitrary. In contrast to \eqref{eq_flandoli_luo}, physical motivations for transport noise at the level of the velocity equation can be found in, e.g., \cite[Subsection 1.2]{agresti_anomalous_2024} and \cite{debussche_second_2024,MR04_turbulent,memin2014fluid}. {The reader is also referred to the recent paper \cite{jiao2026delayed}, where fractional operators appear in the driving noise, rather than in the differential operator in the deterministic part of the equation.}

It is worth pointing out that, besides the hyperviscosity, \eqref{eq_antonio} does not correspond to either \eqref{eq_holm} or \eqref{eq_flandoli_luo}. 
In particular, the stochastic stretching terms appearing in \eqref{eq_antonio} are stronger compared to \eqref{eq_flandoli_luo} but weaker than those in \eqref{eq_holm}. 
Let us also mention that the noise considered in \eqref{eq_flandoli_luo} is enstrophy preserving, while the one of \eqref{eq_antonio} is energy preserving, { and that the one considered in \cite{jiao2026delayed} preserves an appropriate Sobolev norm}. On the other side, the one of \eqref{eq_holm} conserves neither, being circulation preserving. This prevents the possibility of extending deterministic a priori bounds to the stochastic system \eqref{eq_holm}. In view of the comments above, understanding the stabilizing mechanism of the physically motivated stochastic perturbations introduced in \cite{holm2015variational} is completely open and highly non-trivial. Only recently some steps forward in the understanding of their effects (on linear models) were made by the second and third named authors, with collaborators, in the series of works \cite{butori2026ito, butori2026background, butori_meanfield_2025}. 

\smallskip

In this manuscript, we prove global well-posedness with large probability of \eqref{eq_holm} with a slight hyperviscosity weaker than the one present in \eqref{eq_antonio}, see \eqref{main_strat} below. 
To give an informal version of our main results, we introduce some notation.  
Let $\T^3=\R^3/\Z^3$ be the three-dimensional torus, and let $A=\Delta \log(1+(-\Delta)^{1/2})$ be the operator with Fourier multiplier $-|k|^2\log(1+|k|)$, i.e.,
\begin{align}\label{logaritmicA}
    \mathcal{F}\left[A\phi\right](k)=-|k|^2\log(1+|k|)\mathcal{F}\left[\phi\right](k)\quad \text{for }2\pi k\in \Z^3,
\end{align}
which acts componentwise on vector fields.
For the results in this work, the logarithmic growth of the symbol is not essential. Indeed, it is sufficient that $A$ is a Fourier multiplier with symbol growing more than $|k|^2$ corresponding to the Laplace operator, see \autoref{hp_correction} and \autoref{Defintition_operator} below.
Consider the following stochastic Navier-Stokes equations with hyperviscosity and Lie noise:
\begin{equation}\label{main_strat}\tag{SLNS}
\begin{cases}
    \mathrm{d}\omega + \left( \LL_u \omega + A\omega\right)\mathrm{d}t + \sqrt{\nu}\sum_{k\in I}\LL_{\sigma_k} \omega \circ \mathrm{d}W_t^k =0 \quad &\text{on }\T^3,\\
    u= K[\omega]\quad &\text{on }\T^3,\\
    \omega|_{t=0}=\omega_0,
    \end{cases}
\end{equation}
where $\omega$ is the vorticity field of an incompressible fluid and $u$ denotes the corresponding velocity field. Given two vector fields $X, Y$, the notation $\LL_X Y$ stands for the Lie derivative, i.e., 
$$
\LL_X Y:=
X\cdot\nabla Y-Y\cdot\nabla X.
$$
Moreover, $\nu>0$ is the noise intensity and, given the set of indices 
$
    I:=2\pi \Z^3_0\times \{1,2\},
$
for $k=(m,j),$ the noise coefficients $\sigma_k$ satisfy
\begin{align*}
    \sigma_k(x)=e^{im\cdot x}\theta_m a_{m,j},
\end{align*}
where $\theta=\{\theta_m\}_{m\in \Z^3_0}\in \ell^2 $ and, for each $m\in \Z^3_0$, $\{\frac{m}{|m|},\ a_{m,1},a_{m,2}\}$ is an orthonormal system of $\R^3$.
Finally, $\circ$ denotes the Stratonovich product and $\{W^k\}_{k\in I}$ is a family of complex
Brownian motions. Further assumptions on the noise coefficients $\sigma_k$'s and the complex Brownian motions $W^{k}$'s are described in detail below, cf. \autoref{sec_ito}. Here we only anticipate that the value of $\theta_m$ only depends on $|m|$ and, in view of this restriction, the noise in \eqref{main_strat} can be reformulated using only real-valued objects. However, we employ the complex formulation, as it is more convenient for computations.  

Our first main result shows that despite the presence of vortex stretching and the lack of conserved quantities, the noise delays blow-up with probability arbitrarily close to one. Namely, the following holds:
\begin{theorem}[Delayed blow-up solutions of LNSEs by transport-stretching noise – Informal statement]\label{informal_thm1}
    Let $T, R>0,\ \epsilon\in (0,1)$. There exist $\nu>0,\ \theta\in \ell^2(\Z^3_0)$ such that for all zero mean, divergence-free vector field $\omega_0$ such that
    \begin{align*}
        \|\omega_0\|_{L^2(\T^3;\R^3)}\leq R,
    \end{align*}
    there exists a unique smooth solution to the \eqref{main_strat} that does not blow-up before time $T$ with probability larger than $1-\epsilon$.
\end{theorem}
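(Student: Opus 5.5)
We prove this by a scaling-limit argument in the spirit of \cite{galeati_convergence_2020,flandoli_high_2021}, adapted to the stretching part of the Lie noise. First, we rewrite \eqref{main_strat} in It\^o form. The It\^o--Stratonovich corrector $\frac{\nu}{2}\sum_{k\in I}\LL_{\sigma_k}\LL_{\sigma_{-k}}\omega$ should be computed explicitly using isotropy, i.e.\ $\theta_m$ depending only on $|m|$. Its transport--transport part is an effective dissipation $c\,\nu\|\theta\|_{\ell^2}^2\Delta\omega$. The stretching contributions give an operator whose symbol is not sign-definite, and it grows like $\nu\sum_m\theta_m^2|m|^2$ at high modes; this is the singular, possibly negative eddy viscosity term. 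The hyperviscosity $A$, whose symbol grows faster than $|k|^2$, is what we use to absorb it.

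Concretely, we choose $\theta=\theta^N$ supported on $N\le |m|\le 2N$ with $\nu\|\theta^N\|_{\ell^2}^2$ fixed and large. We then check that the corrector splits as $\nu c\Delta\omega+R_N\omega$, where $R_N$ is a Fourier multiplier. On frequencies $|k|\lesssim N$ it is $O(\nu)$ times a bounded operator. On $|k|\gtrsim N$ its symbol is bounded by $C\nu|k|^2$, which $A$ dominates as soon as $\log(1+|k|)\gg \nu$.

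Second, we establish local well-posedness and a blow-up criterion for \eqref{main_strat} in a subcritical space, e.g.\ $H^{s}$ with $s$ large, or via $L^2_tH^{1+}$ energy. This comes from critical variational/maximal regularity theory, using coercivity of $A$ plus the corrector; the main point there is the estimate above showing the It\^o operator is still parabolic for $N$ large. Smoothness then follows by bootstrapping the parabolic regularity.

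Third, we compare with the deterministic limit. Let $\bar\omega$ solve $\partial_t\bar\omega+\LL_{\bar u}\bar\omega+A\bar\omega=\nu c\Delta\bar\omega$, possibly plus the limit of $R_N$ if that has a nontrivial low-frequency limit. For $\nu$ large this equation is globally well posed with $\|\bar\omega\|_{H^1}$ small after a short time, uniformly over $\|\omega_0\|_{L^2}\le R$, since the strong dissipation beats the cubic stretching. We also need $\bar\omega$ to stay bounded in $H^1$ on $[0,T]$. We then estimate $\omega-\bar\omega$ in a negative Sobolev norm $H^{-\alpha}$ up to the stopping time $\tau$ at which $\|\omega\|_{L^2_tH^1}$ leaves a large ball. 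The martingale term $\sqrt\nu\sum_k\LL_{\sigma_k}\omega\,dW^k$ is small in $H^{-\alpha}$ of order $\nu^{1/2}\|\theta^N\|_{\ell^\infty}\to0$, because the $\sigma_k$ oscillate at frequency $N$. The remainder $R_N$ contributes $o(1)$ in weak norms as $N\to\infty$. Interpolating this weak-norm smallness against the uniform strong bound, and combining it with the blow-up criterion and a continuity argument, gives $\PP(\tau<T)<\epsilon$ for $N$ large. Choosing $\nu$, then $N$, yields the result.

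\textbf{Main obstacle.} The hardest step is the a priori energy bound up to $\tau$, because Lie noise preserves neither energy nor enstrophy. It\^o's formula for $\|\omega\|_{L^2}^2$ produces the corrector together with the quadratic variation $\nu\sum_k\|\LL_{\sigma_k}\omega\|^2$. The transport parts of these two cancel, but the stretching parts leave a term like $\nu\sum_m\theta_m^2|m|^2\|\omega\|_{L^2}^2$ plus cross terms. I would first verify via explicit Fourier computation that the net contribution is controlled by $\frac12\|(-A)^{1/2}\omega\|^2$ at high modes and by $C\nu\|\omega\|^2$ at low modes. This is exactly where the super-quadratic growth of $A$ is needed. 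If the constants do not close, I would instead run the estimate in $H^{-\alpha}$, where stretching loses derivatives more favorably.
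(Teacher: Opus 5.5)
There is a genuine gap, and it sits in the step you flag as the main obstacle. It is structural, not a matter of constants.

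In the It\^o formula for $\|\omega\|_{L^2}^2$, the stretching part of the quadratic variation is, as you say, of size $\nu\sum_m\theta_m^2|m|^2\|\omega\|_{L^2}^2\sim\nu N^2\|\omega\|_{L^2}^2$ (the cross terms in fact vanish). It is generated by \emph{every} Fourier mode of $\omega$, including the low ones, where $A$ provides only $O(1)$ dissipation. So it cannot be bounded by $\frac12\|(-A)^{1/2}\omega\|^2+C\nu\|\omega\|^2$ with $C$ independent of $N$; your expression for the term already contradicts the hoped-for low-mode bound.

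Nothing in the drift compensates. With $\theta_{-m}=\theta_m$ and $a_{-m,j}=a_{m,j}$, the It\^o--Stratonovich corrector is exactly $\nu\Delta\omega$: every stretching contribution is odd in $m$ or contains $a_{m,j}\cdot m=0$. So your multiplier $R_N$ is zero, and the corrector's dissipation is used up exactly by the transport part of the quadratic variation.

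Heuristically it is worse still for your stopping time. The term $\sqrt\nu\,\omega\cdot\nabla\sigma_k\,\mathrm{d}W^k$ injects vorticity at frequency $\sim N$ at rate $\sim\nu N^2\|\omega_{\mathrm{low}}\|_{L^2}^2$. Since $A$ damps it at rate $\sim N^2g(N)$, a squared $L^2$-norm of order $\nu\|\omega_{\mathrm{low}}\|_{L^2}^2/g(N)$ remains at frequencies $\sim N$. Its squared $H^1$-norm is then of order $\nu N^2\|\omega_{\mathrm{low}}\|_{L^2}^2/g(N)\to\infty$, while its squared $H^0_g$-norm stays $O(1)$. Consequently:
\begin{itemize}
\item a stopping time defined through $\|\omega\|_{L^2_tH^1}$ is hit early with probability tending to one;
\item well-posedness or bootstrapping in $H^s$ cannot be made uniform in $N$;
\item $H^0_g$ is the strongest norm you can hope to control uniformly.
\end{itemize}
Your fallback ``some $H^{-\alpha}$'' does not fix this on its own, since for $\alpha<1$ the low-mode input still grows like $N^{2-2\alpha}$.

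The missing idea is to do the energy estimate at velocity level, i.e.\ in $H^{-1}$ for the vorticity, using $\LL_\sigma\omega=\nabla\times(\sigma\times\omega)$. No derivative falls on $\sigma_k$, so $\sum_k\|\LL_{\sigma_k}\omega\|_{H^{-1}}^2\le\sum_k\|\sigma_k\|_{L^\infty}^2\|\omega\|_{L^2}^2=3\|\omega\|_{L^2}^2$ uniformly in $N$ (\autoref{lem:noise_estimate}). The hyperviscosity absorbs this via $\|\omega\|_{L^2}^2\le\eps\|\omega\|_{H^0_g}^2+C_\eps\|\omega\|_{H^{-1}}^2$. Hence the noise is a lower-order perturbation of $A+\nu\Delta$ on the ground space $H^{-2}_g$ with domain $H^0_g$.

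Stochastic maximal $L^p$-regularity with $p>4$ can then be applied to the equation with the nonlinearity cut off in the subcritical norm $H^r_g$, $-\frac12<r<-\frac2p$. This gives bounds in $C_tB^{-2/p}_{g,2,p}\cap L^p_tH^0_g$ uniform in $N$. The $H^{-1}$ energy alone is only Leray--Hopf level and would not control the nonlinearity.

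The paper then concludes qualitatively rather than with your quantitative $H^{-\alpha}$ comparison:
\begin{itemize}
\item tightness in $C_tH^{r_0}_g$;
\item identification of the limit as the unique solution of the deterministic cut-off equation;
\item a deterministic estimate for large $\nu$, based on the decaying $H^{-1}$ energy, showing that this solution stays below the cutoff level;
\item hence the cutoff is inactive on $[0,T]$ with probability $\ge1-\eps$.
\end{itemize}
Your deterministic step and your treatment of the martingale tested against smooth functions are compatible with this, since they only need an $L^2_tL^2$ bound. The uniform strong bound, however, must come from the $H^{-1}$/maximal-regularity route, not from $L^2$ or $H^1$ estimates.
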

The above is an informal and suboptimal version of \autoref{thm:main} below. 
In particular, in the latter, we can allow a larger class of (subcritical) initial data. We refer to \autoref{sec:setup} for the definition of all the notation involved in the quoted theorem. \autoref{informal_thm1} says that carefully chosen, small scales transport-stretching noise delays blow-up of logarithmically hyperviscous Navier-Stokes equations. However, contrary for example to \cite{agresti_global_2024, flandoli_high_2021}, extending such solutions to $T=+\infty$ seems a non-trivial, and possibly false, fact. 
This is a consequence of the following two observations already mentioned above that we believe are strongly related:
\begin{itemize}
    \item Transport-Stretching noise is only circulation preserving. In particular, the martingale terms in \eqref{main_strat} have effects in terms of enstrophy estimates, cf. \cite[Appendix~2]{flandoli_high_2021}, and energy estimates, cf. \cite{butori_meanfield_2025}.
    \item The mechanism behind the proof of the result above lies in the possibility to extract an eddy dissipation mechanism from the transport stretching noise, similarly to the quoted literature, cf. \cite{galeati_convergence_2020, flandoli_quantitative_2024}. However, high intensity isotropic vector fields can produce negative eddy viscosity phenomena, cf. \cite{frisch1991,lanotte1999large}. 
\end{itemize}
The above observations seem to prevent the possibility that \emph{time-independent}, isotropic, noise coefficients may improve deterministic theory in terms of providing a global solution, i.e., $T=+\infty$ in \autoref{informal_thm1}, since negative eddy-viscosity effects can affect the system. However, if we allow the coefficients $\theta$ to be piecewise constant, global solutions with probability arbitrarily close to one can be recovered.
 
\begin{theorem}[Global solutions of LNSEs by transport-stretching noise – Informal statement]\label{informal_thm2}
    Let $R>0,\ \epsilon\in (0,1)$. There exist $\nu>0,\ \theta(t)\in \ell^2(\Z^3_0)$ such that for all zero mean, divergence-free vector field $\omega_0$ such that
    \begin{align*}
        \|\omega_0\|_{L^2(\T^3;\R^3)}\leq R,
    \end{align*}
    there exists a unique smooth solution to the \eqref{main_strat} that is global in time with probability larger than $1-\epsilon$. Moreover, $\theta(t)$ can be chosen so that there exists a sequence of deterministic times $\{\overline{T}_i\}_{i\geq 0}$ such that 
\begin{align*}
    \overline{T}_0=0,\quad \overline{T}_{i+1}-\overline{T}_{i}\geq 1 \quad \text{for each } i\geq 0,
\end{align*}
and $\theta(t)$ is constant on $[\overline{T}_i,\overline{T}_{i+1})$.
\end{theorem}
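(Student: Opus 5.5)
Global existence will be obtained by iterating the finite-horizon mechanism behind \autoref{informal_thm1} on successive unit time intervals. The smallness produced on each interval is what lets us restart, and the failure probabilities are arranged to be summable.

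\textbf{Ingredients.} The finite-horizon result (the precise version of \autoref{informal_thm1}, i.e.\ \autoref{thm:main}) is proved by a scaling-limit argument. As $\theta$ concentrates on high modes with suitable normalization and $\nu$ is fixed large, \eqref{main_strat} converges on $[0,T]$, in probability and in a suitable norm, to a deterministic equation. This limit carries an enhanced dissipation $\kappa\Delta$ plus the corrector from the stretching part, which is of lower order thanks to $A$ dominating $-\Delta$ (\autoref{hp_correction}). I need a quantitative form of this convergence, uniform over initial data in an $L^2$-ball (or a subcritical ball) of radius $R$. On the limit equation, large $\kappa$ together with the hyperviscosity gives global smooth solutions with decay: for data of size $\le R$, the solution at time $1$ has size $\le \delta$, and $\delta$ can be made small by choosing $\kappa$ large. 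I also use that the deterministic log-hyperviscous equation is globally well-posed for small data. So once the state is small, even the noisy system with fixed $\theta$ should remain small for one unit of time, with high probability.

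\textbf{Iteration.} Fix $\epsilon_i=\epsilon 2^{-i-1}$. On $[\overline T_0,\overline T_1)=[0,1)$, choose $(\nu,\theta^{(0)})$ via the quantitative finite-horizon result with horizon $1$, radius $R$ and error $\epsilon_0$. This gives an event of probability $\ge 1-\epsilon_0$ on which the solution exists on $[0,1]$ and $\|\omega(1)\|\le R$ (indeed $\le\delta$). Then restart at $\overline T_1$ from $\omega(\overline T_1)$, which lies again in the ball of radius $R$, using the Markov/strong Markov property and the independence of Brownian increments after $\overline T_1$. On the next interval choose $\theta^{(1)}$ with error $\epsilon_1$, and continue. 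The uniformity in the initial datum is essential: $\theta^{(i)}$ must be deterministic and cannot depend on $\omega(\overline T_i)$, only on the deterministic radius $R$. A union bound gives global existence with probability $\ge 1-\sum_i\epsilon_i\ge 1-\epsilon$. Uniqueness and smoothness follow from the local theory on each interval, using parabolic smoothing from $A$. The interval length $1$ ensures $\overline T_{i+1}-\overline T_i\ge1$. In fact the same $\theta$ might work on every interval, but varying it costs nothing.

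\textbf{Main obstacle.} The main difficulty is propagating the ball. The quantitative scaling limit gives closeness to the limit trajectory in a norm like $C([0,1];H^{-s})$ combined with a bound in $L^2_tH^{1}$. The restart, however, needs control of $\|\omega(\overline T_i)\|$ in the \emph{same} norm in which the data ball is measured. I would first get smallness of the limit solution in a stronger norm at time $1$, then show $\|\omega(1)\|_{L^2}$ is small by interpolation: weak-norm closeness plus a high-probability bound on a higher norm. A possible route for the latter is an energy estimate on the difference, in which the stretching It\^o correction is controlled by $A$. If this fails, I would instead restart from some random time in $[\overline T_i+\tfrac12,\overline T_i+1]$ where the $H^1$ norm is small, and adjust $\overline T_i$ accordingly.
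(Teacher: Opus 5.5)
There is a genuine gap, and it is exactly the step you flag as the main obstacle: none of your proposed remedies closes it. Your outer architecture is the paper's: deterministic switching times, one $\nu$ for all intervals, errors $\epsilon_i=\epsilon 2^{-i-1}$, a scaling limit uniform over random $\mathcal F_{\overline T_i}$-measurable data in a ball, and an intersection of good events on which the cutoff is inactive.

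The only information uniform in $n$ comes from stochastic maximal regularity in the $H^{-1}$-based scale (ground space $H^{-2}_g$). It gives bounds on $\sup_t\|\omega^n_t\|_{B^{-2/p}_{g,2,p}}$ and $\int_0^T\|\omega^n_t\|^p_{H^0_g}\,\mathrm{d}t$. It gives convergence to the deterministic limit only in $C([0,T];H^{r_0}_g)$ with $r_0<-2/p$, strictly below the trace space in which the data ball lives. Control in $L^2$ or $H^1$ at a fixed time is precisely what is unavailable. In $L^2$ the stretching part of the It\^o correction, $\nu\sum_k\|\omega\cdot\nabla\sigma^n_k\|^2_{L^2}$, is of order $\nu n^2\|\omega\|^2_{L^2}$, which $A$ cannot absorb uniformly in $n$. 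So an energy estimate on the difference in $L^2$ or above does not close. In $H^{-1}$ it closes, but the It\^o term $\nu\sum_k\|\LL_{\sigma^n_k}\omega^n\|^2_{H^{-1}}$ involves $\omega^n$, not the difference, and is of order $\nu\|\omega^n\|^2_{L^2}$. The estimate therefore gives only an $O(1)$ bound, not smallness; the paper's convergence is purely qualitative, by compactness.

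The interpolation route fails too. Interpolating the $B^{-2/p}_{g,2,p}$ bound with weak-norm closeness gives smallness only in spaces strictly weaker than $B^{-2/p}_{g,2,p}$. Restarting with a Chebyshev radius, which grows like $\epsilon_i^{-1/p}$, forces $M_i\to\infty$ and destroys the uniform choice of $\nu$ in \autoref{lem:dissipation}. Restarting at a random good time makes $\overline T_i$ random, contradicting the required structure of $\theta(t)$.

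The missing idea is to accept a small, summable loss of regularity at each restart. The paper takes $p_i=p_\infty+1/(n_0+i)\downarrow p_\infty>4$ and exponents $-\tfrac12<r^i<r^i_0<\gamma^i<-2/p_i$ with $r^i=-2/p_{i+1}$. Then the convergence space $H^{r^i_0}_g$ of step $i$ embeds into the trace space $B^{-2/p_{i+1}}_{g,2,p_{i+1}}$ of step $i+1$ (via $H^{r^i}_g\hookrightarrow B^{-2/p_{i+1}}_{g,2,p_{i+1}}$ with constant $C_i$), and every space stays subcritical. The construction then runs as follows:
\begin{itemize}
\item $\nu$ is fixed once by \autoref{lem:dissipation}, uniformly because the $\gamma^i$ stay in a compact subset of $(-\tfrac12,0)$ and all radii stay below $\tilde M$.
\item $T_i\ge 1$ is chosen so that the deterministic solution has decayed to $\le M_i/C_i$ in $H^{r^i_0}_g$.
\item $n(i)$ is chosen via \autoref{cor_uniform_random_cond} so that the stochastic solution is $\delta_i/2$-close to it with probability at least $1-\epsilon_i$.
\item The restart datum then lies in $\B_{p_{i+1}}(M_{i+1})$ with $M_i\le M+1$ for all $i$.
\item The glued object is a $p_\infty$-solution, upgraded to a $p$-solution by compatibility of $L^p$-solutions.
\end{itemize}
If you want to keep a fixed $p$, your interpolation idea could at best be rescued by taking the higher norm inside the $H^{-1}$-based scale. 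An example would be a uniform-in-$n$ bound in $B^{-2/q}_{g,2,q}$, $q>p$, at positive times from time-weighted maximal regularity. That is a different argument, and you would have to supply it.

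Finally, your aside that a single $\theta$ might serve every interval is unsupported. Summability of the per-interval errors is what forces $n(i)\to\infty$, and the paper argues that time-independent noise may fail because of negative eddy-viscosity effects.
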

Again, the above is an informal and suboptimal version of \autoref{thm_global} below. The interpretation of \autoref{informal_thm2} is very clear, strongly related to the results proved in \cite{debussche_second_2024, flandoli_additive_2022} that solutions of stochastic models, like the ones considered here, describe large scales of a fluid, while the noise terms are the small ones. If this is the case, due to direct cascade in 3D fluids \cite{richardson, kolmogorov1968local}, energy has to move to smaller and smaller scales, and indeed, we see noise terms of fixed magnitude in time, $\nu,$ but concentrating on smaller and smaller scales as $t\rightarrow +\infty.$ This cascade allows us to extract an eddy dissipation mechanism from the transport stretching noise and ultimately seems to prevent the generation of negative eddy viscosity phenomena.

\subsubsection*{Blow-up in Three-Dimensional Fluid Models and Regularization by Noise}
Hyperviscous variants of the Navier-Stokes equations, written here for simplicity in velocity formulation,
\begin{align}\label{eq_det_hyper}
    \partial_t u + u\cdot\nabla u + \nabla p + Au = 0,
\end{align}
where $A$ is as defined in \eqref{logaritmicA} or $A = (-\Delta)^{\gamma}$ with $\gamma>1$, have attracted considerable attention in the last decades, as they share several structural features with the classical Navier-Stokes equations. Although they can be viewed as a regularized version of the latter due to the presence of hyperviscosity, similarly to the Navier-Stokes case, the only a priori estimate available in general up to the (possible) blow-up time is the energy balance.

As shown by J.-L. Lions \cite{lions_global}, if $A = (-\Delta)^{\gamma}$ with $\gamma \geq \frac{5}{4}$, the energy balance
\begin{align}\label{energy_eq}
\frac{1}{2}    \|u(t)\|^2_{L^2(\T^3;\R^3)} + \int_0^t \|A^{1/2}u(s)\|_{L^2(\T^3;\R^3)}^2\, \mathrm{d}s \leq 
\frac{1}{2}\|u_0\|^2_{L^2(\T^3;\R^3)}
\end{align}
is sufficient to prevent blow-up of strong solutions. Scaling considerations help to understand the relevance of this threshold. In particular, in the case $A = (-\Delta)^{\gamma}$, the space $L^\infty(\R_+; H^{\frac{5}{2}-2\gamma}(\T^3))$ is (locally) invariant under the self-similar scaling of \eqref{eq_det_hyper}:
\begin{align*}
    u^{\lambda}(t,x)=\lambda^{1-\frac{1}{2\gamma}}u(\lambda t, \lambda^{\frac{1}{2\gamma}}x),
\end{align*}
see \cite[Subsection 1.2]{agresti_anomalous_2024} for details.
Therefore, the energy space $L^2(\T^3;\R^3)$ is critical (respectively subcritical) for the hyperviscous Navier-Stokes equations in velocity formulation if and only if $\gamma = \frac{5}{4}$ (respectively $\gamma > \frac{5}{4}$). This threshold is expected to play a role both for the possible blow-up of solutions and for the nonuniqueness of weak solutions satisfying the energy inequality \eqref{energy_eq}; see, for instance, \cite{albritton_nonuniqueness_2022, jia_are_2015}. 

In terms of the vorticity formulation of the Navier-Stokes equations, such as \eqref{main_strat} considered here, the critical space is shifted by one degree of spatial smoothness, and it is given by (in the $L^2$-scale)
\begin{equation}
\label{eq:critical_smoothness_vorticity}
H^{\frac{3}{2}-2\gamma}(\T^3;\R^3).
\end{equation}
Consequently, all spaces of the form $L^\infty_t H^{s}_x$ with $s > \frac{3}{2}-2\gamma$ are subcritical for the equation in vorticity formulation. Let us also remark that, with respect to scaling and criticality, the operator $A$ defined in \eqref{logaritmicA} enjoys the same properties as $-\Delta$ in the classical Navier-Stokes equations.

Several attempts have been made to extend the global well-posedness result of \cite{lions_global} to operators that are less regularizing than $(-\Delta)^{5/4}$. Tao \cite{tao2010global}, and Barbato, Morandin, and Romito \cite{barbato2015global} proved global well-posedness when $(-\Delta)^{5/4}$ is slightly weakened by a suitable Fourier multiplier. Later, Colombo, De Lellis, and Massaccesi \cite{colombo_regularity_2020} showed that there exists $\varepsilon>0$, depending only on the size of the initial condition in suitable function spaces, such that \eqref{eq_det_hyper} admits a global smooth solution whenever $\gamma \geq \frac{5}{4}-\varepsilon$.

Despite these advances, the question of global smooth solutions for \eqref{eq_det_hyper} remains widely open in the case $A = (-\Delta)^{\gamma}$ with $1 < \gamma \ll \frac{5}{4}$, as well as for the even less regularizing operator $A$ defined in \eqref{logaritmicA}. As it is well known, the existence of global smooth solutions for the three-dimensional Navier--Stokes equations, one of the Millennium Prize Problems, remains open \cite{fefferman2006existence}. One of the main difficulties is that the only available a priori estimate for \eqref{eq_det_hyper} is the energy balance \eqref{energy_eq}, which is strongly supercritical with respect to the natural scaling.

On the other hand, if blow-up of smooth solutions occurs only along exceptional, \emph{non-generic} trajectories, stochastic perturbations may drive the system away from such trajectories, thereby restoring well-posedness. This general idea goes back to the seminal works of Zvonkin and Veretennikov \cite{veretennikov1981strong, zvonkin1974transformation} and has since been developed in a wide range of finite- and infinite-dimensional settings (see, for instance, \cite{delarue2014noise, Vicol_reg, herr2023three} for examples of this phenomenon preventing blow-up in infinite-dimensional systems).

In the context of fluid dynamics models, it is now well understood that transport-type noise can improve the well-posedness theory, both by preventing blow-up of smooth solutions \cite{agresti_global_2024, fedrizzi2013noise, flandoli_high_2021} and by restoring uniqueness in low-regularity classes \cite{flandoli_wellposedness_2010, coghi_existence_2023}. However, for three-dimensional models, transport noise neglects stretching effects, which are crucial for accurately capturing the dynamics of fluids (see, e.g., \cite{holm2015variational, flandoli_stochastic_2023}). 

\smallskip

\autoref{informal_thm1} and \autoref{informal_thm2} provide, to the best of our knowledge, the first example in which the regularizing properties of transport-stretching noise are investigated. In particular, they show that blow-up can be suppressed with large probability for a fluid model possessing the same scaling properties and critical spaces as the three-dimensional Navier-Stokes equations.

\subsubsection*{Strategy of the proof}
We outline the main ingredients of the proof of \autoref{thm:main}. The argument is based on a scaling limit from a stochastic equation with transport-stretching noise to a deterministic equation, combined with uniform estimates in a suitable, subcritical, functional setting. In addition, a gluing procedure is required to obtain global results, cf. \autoref{thm_global}.

Fix a time horizon $T>0$. Similarly to \cite{flandoli_high_2021}, the proof relies on the convergence of the solutions $\omega^n$\footnote{Here we write everything in It\^o form, making explicit the structure of the It\^o-Stratonovich corrector which plays a major role in the argument.} to the stochastic system
\begin{align}\label{main_ito_cutoff_scaling}
 \begin{cases}
    \mathrm{d}\omega^n + \left( \phi_{R}(\|\omega^n\|_{H^r_g})\LL_{u^n} \omega^n -A\omega^n - \nu\Delta \omega^n \right)\mathrm{d}t + \sqrt{\nu}\sum_{k\in I}\LL_{\sigma^n_k} \omega \mathrm{d}W_t^k =0 \\
    u^n= K[\omega^n],
    \end{cases}   
\end{align}
to the solution $\overline{\omega}$ of the deterministic equation
\begin{align}\label{main_det_cutoff_intro}
 \begin{cases}
    \partial_t\overline{\omega} +  \phi_{R}(\|\overline{\omega}\|_{H^r_g})\LL_{\overline{u} }\overline{\omega} -A\overline{\omega} - \nu\Delta \overline{\omega} =0 \\
    \overline{u}= K[\overline{\omega}],
    \end{cases}   
\end{align}
in a sufficiently rich space compared to the critical smoothness captured by the scaling of the space \eqref{eq:critical_smoothness_vorticity}. Here, $H_{g}^r$ are space modelled on the Fourier symbol $g$ of the operator $A$ (see \autoref{def_function_spaces}), and the noise coefficients $\sigma_k^n$ satisfy suitable isotropy assumptions and concentrate at smaller and smaller spatial scales as $n \to \infty$. The cutoff function $\phi_{R}$ is smooth, weakly decreasing, and satisfies
\begin{align*}
    \phi_{R}(\rho)=\begin{cases}
        1\quad \text{if } \rho\leq R\\
        0\quad \text{if } \rho> R+1,
    \end{cases}
\end{align*}
with parameters $R>0$ and $-\tfrac{1}{2} < r < 0$. With the latter choice, the space $H^r_g$ has \emph{strictly} more regularity than \eqref{eq:critical_smoothness_vorticity}, and hence, subcriticality. 
In particular, the subcriticality of $H^r_g$ and the cutoff ensure global existence and uniqueness of smooth solutions to both \eqref{main_ito_cutoff_scaling} and \eqref{main_det_cutoff_intro}, as a direct consequence of the theory developed in \cite{agresti_nonlinear_2022a, agresti_nonlinear_2022}. The main difficulty lies instead in proving the scaling limit, due to the lack of uniform (in $n$) a priori bounds for $\omega^n$. Indeed, as $n \to \infty$ the noise becomes increasingly singular, enhancing stretching effects and preventing standard $L^2$ estimates (see \cite[Appendix 2]{flandoli_high_2021}).

The key observation is that \eqref{main_ito_cutoff_scaling} is naturally an equation for the vorticity, and should therefore be analyzed in the space $H^{-1}$ rather than in $L^2$. In this topology, the balance between noise and dissipation is restored. Denoting the noise operators by
\begin{align*}
    C^{k,n}\omega=\sigma_k^n\cdot\nabla \omega-\omega\cdot\nabla \sigma^{k,n},
\end{align*}
one can show that the transport-stretching noise is of lower order than the hyperdissipative operator $A$ in $H^{-1}$. More precisely, the estimate
\begin{align*}
    \sum_{k}\| C^{k,n}\omega \|_{H^{-1}}^2
\le \frac{1}{4} \langle -A \omega,\omega\rangle_{H^{-1}}
+C\|\omega \|_{H^{-1}}^2
\end{align*}
holds uniformly in $n$, for some constant $C \ge 0$. This inequality shows that the noise acts as a perturbation of the dissipative dynamics.

This structure allows us to apply stochastic maximal $L^p$-regularity in time (for $p \gg 2$), yielding uniform a priori bounds for $\omega^n$ in a Besov space compactly embedded in $H^r_g$. By stochastic compactness arguments, we deduce that, for every $\nu \ge 0$, $\omega^n$ converges in probability to $\overline{\omega}$ in $C([0,T];H^r_g)$. This convergence can be made uniform with respect to the size of the initial data.

We now exploit the dissipative structure of the limiting equation. If $\nu$ is chosen sufficiently large with respect to the initial condition of \eqref{main_det_cutoff_intro}, then the Laplacian term enforces exponential decay of $\|\overline{\omega}(t)\|_{H^r_g}$, uniformly in $R$. In particular, for $R$ large enough, one has $\|\overline{\omega}(t)\|_{H^r_g} \le R-1$ for all $t \in [0,T]$. If $T$ is sufficiently large, $\overline{\omega}_T$ is arbitrarily small.

Combining this decay with the convergence of $\omega^n$ to $\overline{\omega}$, it follows that for every $\epsilon>0$ there exists $n$ such that, with probability at least $1-\epsilon$, the solution $\omega^n$ remains below the cutoff threshold $R$ for all $t \in [0,T]$. Hence, with high probability, the cutoff is never activated in \eqref{main_ito_cutoff_scaling}. This allows us to conclude the proof of \autoref{thm:main} by selecting the noise coefficients corresponding to $\omega^n$.

In \cite{agresti_global_2024, flandoli_high_2021}, the combination of convergence up to time $T$ and smallness at time $T$ allows one to extend solutions globally, thanks to conservation properties of the noise (enstrophy or energy preservation). This mechanism is not available in our setting, and a different strategy is required.
To prove \autoref{thm_global}, we implement a gluing procedure: we construct global solutions by iteratively extending local solutions, using the probabilistic control provided by the scaling limit, in place of the conservation laws used by previous results. More precisely, we construct a sequence of deterministic times $(\overline{T}_i)_{i\ge 1}$ and noise coefficients such that:
\begin{itemize}
    \item $\overline{T}_{i+1} - \overline{T}_i \ge 1$,
    \item with probability at least $1 - \epsilon/2^i$, no blow-up occurs on $[\overline{T}_{i-1},\overline{T}_i]$,
    \item the norm of the solution at time $\overline{T}_i$ remains comparable to that of the initial data.
\end{itemize}
At each step, we restart the dynamics with suitably chosen noise coefficients and the same eddy viscosity $\nu$. Iterating this construction yields global existence with probability arbitrarily close to one.

To make this argument rigorous, it is necessary to extend the scaling limit uniformly with respect to random initial data. Moreover, each iteration induces a slight loss of regularity due to stochastic compactness. This loss is compensated by a careful analysis of the deterministic equation \eqref{main_det_cutoff_intro} and the use of stochastic maximal $L^p$-regularity, allowing us to close the argument.

\smallskip

Finally, we emphasize that the scaling limit from \eqref{main_ito_cutoff_scaling} to \eqref{main_det_cutoff_intro} is purely qualitative. As a consequence, no quantitative rate can be obtained on how fast the noise coefficients concentrate at small scales. The latter goes beyond the scope of the current manuscript.

\subsubsection*{Structure of the paper} The paper is organized as follows. In \autoref{sec:setup}, we fix some notation employed throughout the paper, as well as rigorously define what we mean by being a solution of \eqref{main_strat} and state our main results. In \autoref{sec:well_posed}, we provide some local well-posedness results for \eqref{main_strat} as well as global ones and a priori bounds for its cutoff version introduced above. The convergence of \eqref{main_ito_cutoff_scaling} to \eqref{main_det_cutoff_intro} occupies \autoref{sec:scaling_limit}, while the proofs of \autoref{informal_thm1} and \autoref{informal_thm2} occupy \autoref{sec:proof_of_main} and \autoref{sec:proof_global} respectively. Finally, in \autoref{app_technical_res}, we recall for the convenience of the readers some results from \cite{agresti_nonlinear_2025}, as well as prove some interpolation properties related to our functional analytic framework.

\section{Preliminaries}\label{sec:setup}
\subsection{Notation}
We begin by introducing some classical notation and recalling basic properties of operators in the periodic setting that will be used throughout the paper. For a comprehensive treatment of the material summarized in this subsection, we refer to the monographs \cite{temam1995navier,trie1995fun}.

\smallskip

Recall that $\mathbb{T}^3=\mathbb{R}^3/\mathbb{Z}^3$ denotes the three-dimensional torus, and let $\mathbb{Z}_0^3=\mathbb{Z}^3\setminus\{0\}$ the integer lattice without the origin. We introduce a partition
\[
\mathbb{Z}_0^3=\Lambda_{+}\cup\Lambda_{-},
\]
such that $\Lambda_{+}\cap\Lambda_{-}=\emptyset$ and $\Lambda_{+}=-\Lambda_{-}$. We also set
\[
I:=2\pi\mathbb{Z}_0^3\times\{1,2\}.
\]
For every $m\in2\pi\mathbb{Z}_0^3$ we fix unit vectors $a_{m,j}$, $j\in\{1,2\}$, such that 
\[
\left\{\frac{m}{|m|},a_{m,1},a_{m,2}\right\}
\]
forms an orthonormal basis of $\mathbb{R}^3$. We additionally require the symmetry property
\[
a_{-m,j}=a_{m,j}, \qquad j\in\{1,2\}.
\]
For $k=(m,j)\in I$ we define $-k:=(-m,j)$ and set
\[
e_k(x)=a_{m,j}e^{im\cdot x}.
\]
If $a,b>0$, we write $a\lesssim b$ if there exists a constant $C>0$ such that $a\leq Cb$. We write $a\lesssim_\xi b$ when we wish to emphasize the dependence of the constant $C$ on a parameter $\xi$.

\smallskip

Let $\big(H^{s,p}(\mathbb{T}^3),\|\cdot\|_{H^{s,p}}\big)$, $s\in\mathbb{R}$ and $p\in(1,\infty)$, denote the Bessel potential spaces of periodic functions with zero mean. In the case $p=2$ we simply write $H^s(\mathbb{T}^3)$ instead of $H^{s,2}(\mathbb{T}^3)$ and denote by $\langle\cdot,\cdot\rangle_{H^s}$ the corresponding inner product.

When $s=0$, we write $L^p(\mathbb{T}^3)$ in place of $H^{0,p}(\mathbb{T}^3)$, and when $p=2$ we omit the subscript in the notation for both the norm and the inner product. With a slight abuse of notation, for $s>0$ we denote the duality pairing between $H^{-s}$ and $H^s$ by $\langle\cdot,\cdot\rangle$.

We similarly introduce the Bessel spaces of vector fields.
\[
H^{s,p}(\mathbb{T}^3;\mathbb{R}^3)
=
\{u=(u^1,u^2,u^3)^t:\ u^1,u^2,u^3\in H^{s,p}(\mathbb{T}^3)\}.
\]
Again, when $s=0$ we write $L^p(\mathbb{T}^3;\mathbb{R}^3)$ instead of $H^{0,p}(\mathbb{T}^3;\mathbb{R}^3)$ and omit subscripts in the notation of the norm and scalar product when $p=2$.

We denote by $H^{s,p}_{\div}$ the closed subspace of $H^{s,p}(\mathbb{T}^3;\mathbb{R}^3)$ consisting of zero-mean divergence-free vector fields, endowed with the norm induced by $H^{s,p}(\mathbb{T}^3;\mathbb{R}^3)$. In the case $p=2$ (resp. $s=0$) we simply write $H^s_{\div}$ (resp. $L^p_{\div}$).
It is well known that the Stokes operator
\[
\Delta:H^2_{\div}\subset L^2_{\div}\rightarrow L^2_{\div}
\]
is linear, closed, self-adjoint, and of negative type. Moreover, the family $(e_k)_{k\in I}$ forms an orthonormal system in $L^2_{\div}$ consisting of eigenfunctions of $-\Delta$.

We also denote by
\[
\Pi:H^{s,p}(\mathbb{T}^3;\mathbb{R}^3)\rightarrow H^{s,p}_{\div}
\]
the Leray projection and by
\[
K:H^{s,p}_{\div}\rightarrow H^{s+1,p}_{\div}
\]
the Biot-Savart operator, without distinguishing the values of $s$ and $p$ in the notation.

\smallskip

We conclude this subsection by introducing some standard notation for stochastic processes taking values in separable Banach spaces. Let $(\Omega,\mathcal{F},(\mathcal{F}_t)_{t\ge0},\mathbb{P})$ be a filtered probability space and let $Z$ be a separable Banach space with norm $\|\cdot\|_Z$. Throughout the paper, we assume, unless otherwise specified, that $(\Omega,\mathcal{F},\mathbb{P})$ is complete and that $(\mathcal{F}_t)_{t\ge0}$ is a right-continuous filtration such that $\mathcal{F}_0$ contains all $\mathbb{P}$-null sets.

Given $T_0,T\in [0,+\infty],\ T>T_0$, we denote $L_{\mathcal{F}}^{0}(T_0,T;Z)$ the space of progressively measurable processes with values in $Z$, and by $L_{\mathcal{F}}^{p}(T_0,T;Z)$, $p\in[1,\infty)$, the space of progressively measurable processes $(X_t)_{t\in[0,T]}$ with values in $Z$ such that
\[
\mathbb{E}\Big[\int_{T_0}^T \|X_t\|_Z^p\,\mathrm{d}t\Big]<\infty.
\]
Finally, for $t\ge0$ and $p\in (0,\infty)$, $L^p_{\mathcal{F}_t}(\Omega,Z)$ denotes the space of $\mathcal{F}_t$-measurable random variables with values in $Z$ with finite $p$-th moment, and with the obvious modifications in case $p\in \{0,+\infty\}$.
\subsection{Functional Analytic Setup}
In this section, we introduce the class of hyperdissipative operators $A$ considered in this work, of which \eqref{logaritmicA} provides a representative example, together with the functional framework naturally associated with them. We begin by stating a set of assumptions.
\begin{hypothesis}\label{hp_correction}
    Let $g:\R^+\rightarrow \R^+$ be a function satisfying
    \begin{enumerate}
        \item\label{hp_correction1} $g$ is increasing and $g(1)>0.$
        \item\label{hp_correction2} $\sup_{r>0}\frac{g(2r)}{g(r)}<+\infty.$
    \item\label{hp_correction3} $\lim_{r\rightarrow +\infty}g(r)=+\infty,\ \lim_{r\rightarrow +\infty}\frac{g(r)}{r^2}=0.$
    \end{enumerate}
\end{hypothesis}

\begin{remark}
    Typical examples satisfying the above assumptions are 
    \begin{equation*}
        g(r)=\log(1+r),\quad g(r)=r^{\gamma},\ 0\leq \gamma<2.
    \end{equation*}
\end{remark}
For every function $g$ satisfying \autoref{hp_correction}, we define a hyperdissipative operator $A$ as follows.
\begin{definition}\label{Defintition_operator}
We denote by $A$ the diagonal operator with Fourier symbol $-\lvert m\rvert^2 g(|m|)$, namely 
\begin{align*}
    Ae^{im\cdot x}=-\lvert m\rvert^2 g(|m|) e^{im\cdot x},\qquad m\in2\pi \mathbb{Z}^{3}.
\end{align*}
\end{definition}
Throughout the paper, \autoref{hp_correction} will always be assumed, even when not explicitly stated. We underline that condition 1) and 3) imply that $A$ is stronger than $\Delta$ at high frequencies, while 2) is a technical assumption only needed to characterize interpolation spaces. Before stating the properties of the operator $A$, we need to shortly recall some notation from \cite[Section 3]{gubinelli2015lectures}. Let $(\chi,\rho)$ be a dyadic partition of unity. Define
\begin{align*}
    \rho_{-1}:=\chi, \qquad 
\rho_j(\xi):=\rho(2^{-j}\xi), \qquad j\ge0.
\end{align*}
For $u\in\mathscr{S}'(\T^3)$ we define the Littlewood-Paley blocks
\[
\Delta_j u
=
\mathcal{F}^{-1}(\rho_j \mathcal{F}u)
=
\sum_{m\in\Z^3}
e^{im\cdot x}\,
\langle u,e^{-im\cdot x}\rangle
\rho_j(m).
\]
With this notation at hand, we can introduce the main function spaces used in this work, which are tailored to the operator $A$ defined in \autoref{Defintition_operator} (see \autoref{semigroup} and \autoref{generation_semigroup} below).
\begin{definition}\label{def_function_spaces}
    For $s\in \R$ we denote by 
    \begin{align*}
        H^{s}_g=\Big\{h\in \mathscr{S}'(\T^3;\R^3):\operatorname{div}h=0,\int_{\T^3}h \mathrm{d}x=0, \sum_{k=(m,j)\in I} \lvert m\rvert^{2s}g(|m|)^{s+1}\langle h,e_{k}\rangle^2<+\infty\Big\}
    \end{align*}
    endowed with its natural norm. 
    For $s\in \R,\ p\in [1,+\infty)$ we set
    \begin{align*}
        B^{s}_{g,2,p}=\Big\{h\in \mathscr{S}'(\T^3;\R^3)\,&:\, \operatorname{div}h=0,\int_{\T^3}h \mathrm{d}x=0,\\
        &\ \ \sum_{j\geq -1} \left(2^{s j}g(2^j)^{\frac{s+1}{2}}\norm{\Delta_j h}_{L^2(\T^3;\R^3)}\right)^{p}<+\infty \Big\}
    \end{align*}
    endowed with its natural norm, with the obvious modification in the case $p=\infty$.
\end{definition}
\begin{remark}
    Clearly $B^{s}_{g,2,p_1}\hookrightarrow B^{s}_{g,2,p_2}$ whenever $p_1\leq p_2$.
\end{remark}

We further introduce the spaces
\begin{equation*}
    X_0=H^{-2}_g,
\qquad
X_{1/2}=H^{-1}_g\simeq H^{-1}_{\div},
\qquad
X_1=H^0_g .
\end{equation*}
The spaces defined above and the operator $A$ satisfy the following properties. The proofs follow from standard arguments and are postponed to \autoref{sec:interpolation}.
\begin{lemma}\label{lemma_interpolation}
For every $s\in \R$ it holds
\begin{align}\label{identification_p2}
H^s_g\simeq B^{s}_{g,2,2}. 
\end{align}
Given $p_0,p_1,p\in [1,+\infty],\ s_0,s_1\in \R: s_0\leq s_1$ and $\theta\in (0,1)$ it holds
\begin{align}\label{real_interpolation}
    \left(B^{s_0}_{g,2,p_0}, B^{s_1}_{g,2,p_1}\right)_{\theta,p}=B^{\theta s_1+(1-\theta)s_0}_{g,2,p}
\end{align}
\end{lemma}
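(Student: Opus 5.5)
The plan is to reduce both statements to facts about weighted sequence spaces, using that every operator in sight is a Fourier multiplier. For \eqref{identification_p2}, fix a divergence-free, zero-mean $h$ and write $\|\Delta_j h\|_{L^2}^2=\sum_{m}\rho_j(m)^2|\hat h(m)|^2$ by Plancherel. The dyadic partition of unity has two properties we use: for $j\ge0$ the support of $\rho_j$ lies in an annulus $\{c2^j\le |m|\le C2^j\}$, and at most two $\rho_j$'s are nonzero at any given $m$, with $\sum_j\rho_j(m)^2\simeq 1$. It then suffices to show that on this annulus the weight $|m|^{2s}g(|m|)^{s+1}$ is comparable to $2^{2sj}g(2^j)^{s+1}$, with constants independent of $j$ and $m$. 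The factor $|m|^{2s}$ is comparable to $2^{2sj}$ trivially. For $g(|m|)\simeq g(2^j)$, iterate \autoref{hp_correction}\eqref{hp_correction2} a bounded number of times to get the upper bound. For the lower bound, use monotonicity of $g$ in the form $g(2^j)\le g(2^{N}|m|)\le M^N g(|m|)$, where $M$ is the constant in \autoref{hp_correction}\eqref{hp_correction2}. Since $s+1$ may be negative, both directions of the comparison are needed, and that is why we prove two-sided bounds. The low-frequency block $j=-1$ contains only finitely many $m\neq0$ (the zero mode is excluded), and $g>0$ on $[2\pi,\infty)$ because $g(1)>0$ and $g$ is increasing, so this block is harmless. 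Summing over $j$ then gives the equivalence of norms.

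For \eqref{real_interpolation}, we view $B^{s}_{g,2,p}$ as a retract of the weighted sequence space $\ell^p_{w_s}(L^2)$ with weight $w_s(j)=2^{sj}g(2^j)^{(s+1)/2}$. The coretraction is $h\mapsto(\Delta_j h)_j$, and the retraction is $(f_j)_j\mapsto\sum_j\tilde\Delta_j f_j$, where $\tilde\Delta_j=\Delta_{j-1}+\Delta_j+\Delta_{j+1}$, composed with the Leray projection and the removal of the mean. All of these are $L^2$-contractions that commute with the blocks and change weights only by bounded factors, using the doubling property once more. By the retraction theorem (Triebel), it is enough to interpolate $(\ell^{p_0}_{w_{s_0}}(L^2),\ell^{p_1}_{w_{s_1}}(L^2))_{\theta,p}$.

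The main obstacle is that the weights are not pure powers $2^{sj}$, so the classical result $(\ell^{p_0}_{s_0},\ell^{p_1}_{s_1})_{\theta,p}=\ell^p_{s_\theta}$ cannot be quoted verbatim. The ratio $w_{s_1}(j)/w_{s_0}(j)=\big(2^{j}g(2^j)^{1/2}\big)^{s_1-s_0}$ is, however, strictly geometrically increasing when $s_0<s_1$. This is where we need $g$ increasing and doubling: the sequence $2^jg(2^j)^{1/2}$ grows at least like $2^j$ and at most like $C^j$. Hence we can run the standard $K$-functional computation. We estimate $K(t,f)$ by splitting the index set at $j(t)$, defined by $w_{s_1}(j)/w_{s_0}(j)\approx t^{-1}$, and then change variables $t\leftrightarrow j$. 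Because the ratio grows geometrically, the integral $\int t^{-\theta p}K(t,f)^p\,\mathrm{d}t/t$ becomes a sum over $j$ weighted by $w_{s_0}^{1-\theta}w_{s_1}^{\theta}=w_{\theta s_1+(1-\theta)s_0}$. This is exactly the general theorem on interpolating weighted $\ell^q$ spaces with arbitrary weights (Bergh--Löfström, Thm.~5.6.1), which I would cite after checking the geometric-growth hypothesis. That theorem does not require $p_0=p_1$, because the outer index $p$ alone determines the result. The degenerate case $s_0=s_1$ is covered by the same theorem, or simply by a direct argument.
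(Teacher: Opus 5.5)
For $s_0<s_1$ your argument is correct and is essentially the paper's route. The norm equivalence is the paper's Step 1 (Plancherel, $\sum_j\rho_j^2\simeq 1$, annular supports, doubling). Your two-sided comparison of $g(|m|)$ with $g(2^j)$ handles $s+1<0$ directly, where the paper appeals to duality. For the interpolation identity the paper only refers to a Besov-space theorem in Hyt\"onen--van Neerven--Veraar--Weis. Its Step 2 proves just the inequality $\|f\|_{B^{s_\theta}_{g,2,1}}\lesssim\|f\|_{B^{s_0}_{g,2,\infty}}^{1-\theta}\|f\|_{B^{s_1}_{g,2,\infty}}^{\theta}$, by cutting the dyadic sum at an $N$ chosen through $\beta=2^Ng(2^N)^{1/2}$. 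That is the same quantity and the same splitting as your $K$-functional argument, which in addition yields the full identity.

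\textbf{The gap: the case $s_0=s_1$.} Your final sentence is wrong. Bergh--L\"ofstr\"om's Theorem 5.6.1 assumes $s_0\neq s_1$, and the identity is false in that case. For any Banach space $X$ one has $K(t,x;X,X)=\min(1,t)\|x\|_X$, hence $(X,X)_{\theta,p}=X$. So, for example, $(B^{s}_{g,2,2},B^{s}_{g,2,2})_{\theta,\infty}=B^{s}_{g,2,2}$, which is strictly smaller than $B^{s}_{g,2,\infty}$. To see this, put one divergence-free Fourier mode in each dyadic annulus, normalised so that $w_s(j)\|\Delta_j h\|_{L^2}\simeq 1$. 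More generally, for $s_0=s_1$ your retraction identifies the interpolation space with a weighted Lorentz sequence space $\ell^{p_\theta,p}(L^2)$, where $1/p_\theta=(1-\theta)/p_0+\theta/p_1$; this is $\ell^p$ only when $p=p_\theta$. In your own terms, the ratio $w_{s_1}/w_{s_0}$ is then identically $1$, so there is no geometric growth and the change of variables $t\leftrightarrow j$ is unavailable. You should prove the identity only for $s_0<s_1$, and read the lemma's hypothesis "$s_0\le s_1$" that way. The paper's own proof tacitly uses $s_0<s_\theta<s_1$, and all later uses have $s_0<s_1$ (e.g.\ $X_0=H^{-2}_g$, $X_1=H^0_g$).

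\textbf{A smaller point on the citation.} Bergh--L\"ofstr\"om's Theorem 5.6.1 concerns the power weights $2^{js}$, not arbitrary weights. Write $w_s(j)=\lambda_j^{s}\mu_j$ with $\lambda_j=2^jg(2^j)^{1/2}$ and $\mu_j=g(2^j)^{1/2}$. Multiplication by $\mu_j$ is an isometry from $\ell^p_{w_s}(L^2)$ onto $\ell^p_{\lambda^s}(L^2)$, for all $s$ and $p$ simultaneously. The proof of that theorem then runs verbatim with $\lambda_j$ in place of $2^j$, because $2\le\lambda_{j+1}/\lambda_j\le 2\big(\sup_{r>0}g(2r)/g(r)\big)^{1/2}$. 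What is needed is this two-sided bound on \emph{consecutive} ratios, not the global bounds $2^j\lesssim\lambda_j\lesssim C^j$ you mention:
\begin{itemize}
\item the lower bound (lacunarity) is what sums the geometric series;
\item the upper bound is what lets you discretise $\int_0^\infty t^{-\theta p}K(t,f)^p\,\mathrm{d}t/t$ into a sum over $j$.
\end{itemize}
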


\begin{lemma}\label{generation_semigroup}
The operator
\begin{align*}
    A:D(A)=X_1\subseteq X_0\rightarrow X_0
\end{align*}
 is linear and closed. Moreover, $-A$ is self-adjoint and satisfies
\begin{align}\label{inequality_coercivity}
    \langle -A x,x\rangle_{X_0}\geq \norm{x}_{X_{1/2}}^2\geq g(1)\norm{ x}_{X_0}^2
\end{align}  
for all $x\in X_1.$
\end{lemma}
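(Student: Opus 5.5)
The whole argument will be carried out on the Fourier side. Every object is diagonal in the orthonormal system $(e_k)_{k\in I}$ of $L^2_{\div}$.

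For $h$ zero-mean and divergence-free we write $\hat h_k=\langle h,e_k\rangle$. With this notation the norm of $X_\alpha=H^{2\alpha-2}_g$ reads
\[
\|h\|_{X_\alpha}^2=\sum_{k=(m,j)\in I}|m|^{2(2\alpha-2)}g(|m|)^{2\alpha-1}|\hat h_k|^2 .
\]
Hence $X_0$ has weights $w_0(m)=|m|^{-4}g(|m|)^{-1}$, $X_{1/2}$ has weights $w_{1/2}(m)=|m|^{-2}$, and $X_1$ has weights $w_1(m)=g(|m|)$. In particular the norm of $X_{1/2}$ coincides with that of $H^{-1}_{\div}$, which justifies $X_{1/2}\simeq H^{-1}_{\div}$. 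Moreover $A e_k=-\lambda_m e_k$ with $\lambda_m=|m|^2g(|m|)>0$, because $g>0$ on $[1,\infty)$ by \autoref{hp_correction}\ref{hp_correction1} and $|m|\ge 2\pi$.

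\emph{Domain and boundedness.} We regard $A$ as the multiplier $\hat h_k\mapsto-\lambda_m\hat h_k$. For $x\in X_1$ we have
\[
\|Ax\|_{X_0}^2=\sum_k \lambda_m^2 w_0(m)|\hat x_k|^2=\sum_k g(|m|)|\hat x_k|^2=\|x\|_{X_1}^2 .
\]
So $A:X_1\to X_0$ is an isometry, and it is onto, since for $y\in X_0$ the element with coefficients $-\hat y_k/\lambda_m$ lies in $X_1$. We also have $X_1\hookrightarrow X_0$ densely: finite Fourier sums are dense, and $w_0(m)\le C\,w_1(m)$ since $w_0/w_1=|m|^{-4}g(|m|)^{-2}$ is bounded on $|m|\ge2\pi$ by monotonicity of $g$.

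\emph{Closedness.} Let $x_n\to x$ in $X_0$ with $Ax_n\to y$ in $X_0$. Since $A$ is an isometry from $X_1$ onto $X_0$, the sequence $(x_n)$ is Cauchy in $X_1$, hence converges in $X_1$ to some $\tilde x$. The embedding gives $\tilde x=x$, and boundedness of $A:X_1\to X_0$ gives $Ax=y$.

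\emph{Self-adjointness.} In $X_0$ we compute
\[
\langle -Ax,z\rangle_{X_0}=\sum_k\lambda_m w_0(m)\hat x_k\overline{\hat z_k}=\sum_k |m|^{-2}\hat x_k\overline{\hat z_k}.
\]
This is symmetric in $x,z\in X_1$, so $-A$ is symmetric. Since $A$ is surjective onto $X_0$, and more generally $\lambda-A$ is surjective for $\lambda\ge0$ by the same coefficientwise inversion, a symmetric operator with this property is self-adjoint. Alternatively, $-A$ is unitarily equivalent to multiplication by the real sequence $\lambda_m$ on the weighted $\ell^2$ space, with maximal domain exactly $X_1$.

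\emph{Coercivity.} Taking $z=x$ in the identity above gives
\[
\langle -Ax,x\rangle_{X_0}=\sum_k|m|^{-2}|\hat x_k|^2=\|x\|_{X_{1/2}}^2 ,
\]
which is the first inequality of \eqref{inequality_coercivity}, in fact with equality. For the second inequality we compare weights:
\[
|m|^{-2}=|m|^{-4}g(|m|)^{-1}\cdot |m|^2 g(|m|)\ge w_0(m)\,(2\pi)^2 g(2\pi)\ge g(1)\,w_0(m),
\]
using $|m|\ge 2\pi>1$ and that $g$ is increasing. Summing over $k$ yields $\|x\|_{X_{1/2}}^2\ge g(1)\|x\|_{X_0}^2$.

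Everything here is routine. The only points needing care are the identification of the domain, which uses the isometry property, and the use of monotonicity of $g$ together with $|m|\ge2\pi$ in the final weight comparison.
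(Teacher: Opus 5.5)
Your proof is correct and follows the paper's route. Closedness and \eqref{inequality_coercivity} come from the diagonal Fourier computation, which the paper calls trivial and you write out in full; you in fact get equality in the first inequality. Self-adjointness comes from symmetry plus surjectivity of $A:X_1\to X_0$, which is exactly the argument the paper spells out through the domain of the adjoint.
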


As a direct consequence of \autoref{lemma_interpolation} and \autoref{generation_semigroup}, together with standard semigroup and interpolation results (see in particular \cite[Chapter 2]{Lunardi_book} and \cite{weis2006h}), we obtain the following.
\begin{corollary}\label{semigroup}
$A:X_1\subseteq X_0\rightarrow X_0$ generates an analytic semigroup of negative type on $X_0$ which we denote by $e^{At}$. Moreover, $-A$ admits a bounded $H^{\infty}$ calculus with angle zero. Finally, for $\theta\in (0,1),$ and  $p\in [1,+\infty]$, setting 
\begin{align*}
    D_A(\theta,p)=\Big\{x\in X_0:t\rightarrow v_t=\norm{t^{1-\theta-\frac{1}{p}}A e^{At}x}_{X_0}\in L^p(0,1)\Big\}
\end{align*}
endowed with the norm $\norm{x}_{D_A(\alpha,p)}=\norm{x}_{X_0}+\norm{v}_{L^p(0,1)}$,
it holds
\begin{align}\label{fractional_powers}
    [X_0,X_1]_{\theta}&=D((-A)^{\theta})= H^{2(\theta-1)}_g,\\
    \label{trace_spaces}
    \left(X_0,X_1\right)_{\theta,p}&=D_A(\theta,p)=B^{2(\theta-1)}_{g,2,p},\\
    \label{embedding}
    D_A(\theta,1)&\hookrightarrow D((-A)^{\theta})\hookrightarrow D_A(\theta,\infty).
\end{align}
\end{corollary}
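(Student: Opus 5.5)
Since $-A$ is diagonal in the orthonormal system $(e_k)$ with symbol $\lambda_m:=|m|^2 g(|m|)$, I will argue entirely through the spectral theorem and Fourier multipliers. The plan has three steps.

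First, I prove the analytic semigroup claim. By \autoref{generation_semigroup}, $-A$ is self-adjoint on the Hilbert space $X_0$ and satisfies $\langle -Ax,x\rangle_{X_0}\ge g(1)\|x\|_{X_0}^2$. So $-A$ is a positive self-adjoint operator with spectrum contained in $[g(1),\infty)$. The spectral theorem then gives that $A$ generates the analytic semigroup $e^{At}e_k=e^{-\lambda_m t}e_k$, with $\|e^{At}\|\le e^{-g(1)t}$. The semigroup is of negative type, and it is analytic of angle $\pi/2$.

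Second, I get the bounded $H^\infty$ calculus and the complex interpolation spaces. For $f\in H^\infty(\Sigma_\phi)$ and any $\phi>0$, the spectral theorem gives $f(-A)e_k=f(\lambda_m)e_k$, hence $\|f(-A)\|\le \sup_{\lambda>0}|f(\lambda)|\le\|f\|_\infty$. This is a bounded $H^\infty$ calculus of angle zero. The calculus in particular yields bounded imaginary powers, so $[X_0,X_1]_\theta=D((-A)^\theta)$ (see \cite{weis2006h}, \cite{Lunardi_book}). To identify $D((-A)^\theta)$, compute the weight. The $X_0=H^{-2}_g$ weight is $|m|^{-4}g(|m|)^{-1}$. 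Multiplying by $\lambda_m^{2\theta}=|m|^{4\theta}g^{2\theta}$ gives $|m|^{4(\theta-1)}g^{2\theta-1}$, and this is exactly the $H^{s}_g$ weight $|m|^{2s}g^{s+1}$ with $s=2(\theta-1)$. This proves \eqref{fractional_powers}.

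Third, I treat the real interpolation. The identity $(X_0,X_1)_{\theta,p}=D_A(\theta,p)$ is the standard characterization of real interpolation spaces through analytic semigroups, \cite[Chapter 2]{Lunardi_book}; the weight $t^{1-\theta-1/p}$ in the definition makes $v_t$ land in $L^p(0,1)$. For the concrete identification, I use \eqref{identification_p2} to write $X_0=B^{-2}_{g,2,2}$ and $X_1=B^{0}_{g,2,2}$. Then \eqref{real_interpolation} with $s_0=-2$, $s_1=0$ gives $(X_0,X_1)_{\theta,p}=B^{-2(1-\theta)}_{g,2,p}$, which proves \eqref{trace_spaces}.

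The embeddings \eqref{embedding} follow from the general chain $(X_0,X_1)_{\theta,1}\hookrightarrow[X_0,X_1]_\theta\hookrightarrow(X_0,X_1)_{\theta,\infty}$. Alternatively, they can be checked directly, since $B^{s}_{g,2,1}\hookrightarrow B^s_{g,2,2}=H^s_g\hookrightarrow B^s_{g,2,\infty}$.

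The only genuinely delicate point is \eqref{real_interpolation} itself, i.e.\ \autoref{lemma_interpolation}, where the doubling condition on $g$ is needed. It lets the symbol $|m|^{s}g(|m|)^{(s+1)/2}$ be replaced, uniformly on each dyadic annulus, by $2^{js}g(2^j)^{(s+1)/2}$. Since that lemma is assumed here, the corollary reduces to the bookkeeping above. The step I would check most carefully is the exponent matching for $g$ in \eqref{fractional_powers}.
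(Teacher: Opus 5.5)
Your proposal is correct and follows the paper's route: the paper obtains the corollary as a direct consequence of \autoref{lemma_interpolation} and \autoref{generation_semigroup} together with the standard results in \cite[Chapter 2]{Lunardi_book} and \cite{weis2006h}. Your argument supplies exactly those details: the spectral theorem for the positive self-adjoint diagonal operator $-A$, the explicit weight computation for $D((-A)^\theta)$, Lunardi's characterization of $D_A(\theta,p)$, \eqref{real_interpolation} with $s_0=-2$ and $s_1=0$, and the standard chain $(X_0,X_1)_{\theta,1}\hookrightarrow[X_0,X_1]_\theta\hookrightarrow(X_0,X_1)_{\theta,\infty}$.
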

\subsection{Main Results}\label{sec_ito}
Before stating our main results, we need to rigorously introduce system \eqref{main_strat} and define the noise coefficients. As already mentioned in \autoref{sec:intro}, we employ an isotropic, divergence-free noise. We choose the coefficients $(\sigma_k)_{k\in I}$ having the following structure
\begin{align*}
    \sigma_k(t,x)= \sigma_{m,i}(t,x)=\theta_m(t) a_{m,i}e^{im\cdot x},\qquad \theta_m=\theta_{-m}.
\end{align*}
Next, we take $(W_t^k)_{k\in I}$ a sequence of $\C$-valued Brownian motions, such that 
\begin{align*}
    [W^{(m,i)}_t, \overline{W^{(m', j)}_t}]= 2t \delta_{m,m'}\delta_{i,j} , \qquad \overline{W^{(m, j)}_t}= W^{(-m, j)}_t.
\end{align*}
In this way, our noise is real, indeed 
\begin{equation*}
    \overline{\sigma_k} = \sigma_{-k}, \qquad \overline{W}_t:= \overline{\sum_{k\in I}\sigma_kW_t^k} = \sum_{k\in I}\overline{\sigma_k} W_t^{-k} = \sum_{k\in I} \sigma_{-k}W_t^{-k} = W_t.
\end{equation*}
Finally, we make an explicit choice for the (possibly time-varying) coefficients to simplify computations:
\begin{hypothesis}\label{hyp_noise}
There exists $(\overline{T}_i)_{i\geq 0}$ such that $\overline{T}_0=0,\ \overline{T}_{i+1}-\overline{T}_i\geq 1 $, $\theta(t)$ is piecewice constant and for each $i\geq 0$ there exists $n=n(i)\in \N$ such that 
\begin{equation}\label{def_coefficients}
 \theta_m(t) := \frac{1}{Z_n}\one_{[n\le |m| \le 2n]} |m|^{-3/2}\quad \text{for each } t\in [\overline{T}_i, \overline{T}_{i+1}),
\end{equation}
where $Z_n= \left(\frac{2}{3}\sum_{m\in \Z_0^3}\one_{[n\le |m| \le 2n]} |m|^{-3}\right)^{1/2}$.
\end{hypothesis}
As it is customary, to interpret \eqref{main_strat} we first rewrite it in It\^o form.
Due to \autoref{hyp_noise}, by \cite[Lemma 2.13]{butori_meanfield_2025}, equation \eqref{main_strat} is formally equivalent to the following stochastic partial differential equation: 
\begin{equation}\label{main_ito}
\begin{cases}
    \mathrm{d}\omega + \left( \LL_u \omega - A\omega - \nu\Delta \omega \right)\mathrm{d}t + \sqrt{\nu}\sum_{k\in I}\LL_{\sigma_k} \omega \mathrm{d}W_t^k =0 \\
    u= K[ \omega].
    \end{cases}
\end{equation}
Our analysis will focus on the properties of this equation. We first introduce a proper notion of solution for \eqref{main_ito}.
\begin{definition}\label{def:local_well_posed}
    Given $\omega_0 \in L^0_{\mathcal{F}_0}(\Omega, H^{-1}_{\operatorname{div}})$, $\theta$ satisfying \autoref{hyp_noise}, $p\in [2, +\infty)$, a stopping time $\tau:\Omega\rightarrow [0, +\infty]$ and a progressively measurable process $\omega_t: \Omega\times [0, \tau) \rightarrow H^{0}_g(\T^3)$.  We say that :
    \begin{itemize}
        \item[$\bullet$] $(\omega,\tau)$ is a \emph{Local p-Solution} of equation \eqref{main_ito} if the following are satisfied:
    \begin{itemize}
        \item[-] $\omega\in L^p((0, \tau); H^{0}_g) $ almost surely
        \item[-] $\LL_u\omega \in L^p((0, \tau); H^{-2}_g) $ almost surely
        \item[-] almost surely, it holds for all $t\in [0,\tau)$ 
        \begin{equation*}
            \omega_t- \omega_0 + \int_0^t \left(\LL_u  -A - \nu \Delta\right)\omega_s \mathrm{d}s + \sqrt{\nu}\sum_{k\in I} \int_0^t\LL_{\sigma_k(s)}\omega_s \mathrm{d}W_s^k=0,
        \end{equation*}
        where the equality is intended as elements of $H^{-2}_g$.
    \end{itemize}
    \item[$\bullet$] A local p-solution $(\omega,\tau)$ to \eqref{main_ito} is a said to be a \emph{unique local p-solution} to \eqref{main_ito} if for any local solution $(\omega',\tau')$ we have $\omega=\omega'$ a.e. on $[0,\tau\wedge\tau')\times \Omega$.
    \item[$\bullet$] A unique local p-solution $(\omega,\tau)$ to \eqref{main_ito} is a said to be a \emph{unique maximal p-solution} to \eqref{main_ito} if for any local solution $(\omega',\tau')$ we have $\tau'\leq \tau$ almost surely and $\omega=\omega'$ a.e. on $[0,\tau\wedge\tau')\times \Omega$.
    \end{itemize}
\end{definition}
\begin{remark}
Considering the cutoff version of \eqref{main_ito}, namely
\begin{equation}\label{main_ito_cutoff}
\begin{cases}
    \mathrm{d}\omega + \left( \phi_{R}(\|\omega\|_{H^r_g})\LL_u \omega -A\omega - \nu\Delta \omega \right)\mathrm{d}t + \sqrt{\nu}\sum_{k\in I}\LL_{\sigma_k} \omega \mathrm{d}W_t^k =0 \\
    u= K[\omega],
    \end{cases}
\end{equation}
where $R>0,\ -\frac{1}{2}<r<0$ and $\phi_{R}:[0,+\infty)\rightarrow [0,1]$ is a smooth, weakly decreasing function such that
\begin{align*}
    \phi_{R}(\rho)=\begin{cases}
        1\quad \text{if } \rho\leq R\\
        0\quad \text{if } \rho> R+1,
    \end{cases}
\end{align*}
an analogous definition holds, just replacing the nonlinearity $\LL_u\omega$ with $\phi_R(\|\omega\|)\LL_u \omega.$
\end{remark}
Existence and uniqueness of maximal local p-solutions is proved in \autoref{sec:well_posed} for $p\geq 4$ and $\omega_0 \in B^{{-\frac{2}{p}}}_{g,2,p}$. In case of $p>4$ we can run the program described in the introduction by \autoref{informal_thm1} and \autoref{informal_thm2} leading to the following results: 
\begin{theorem}\label{thm:main}
Let $p\in ( 4, \infty)$, $\eps\in(0,1)$, $M\ge 1$ and $T>0$. Then there exists $\nu >0$, $n\ge 1$ and $(\theta_k)_{k\in I}$ satisfying \autoref{hyp_noise} and independent of time such that for all initial data 
\begin{align*}
    \omega_0 \in B^{{-\frac{2}{p}}}_{g,2,p}, \qquad \|\omega_0\|_{B^{{-\frac{2}{p}}}_{g,2,p}} \le M
\end{align*}
there exists a unique maximal $L^p$ solution $(\omega, \tau)$ of equation \eqref{main_ito} such that 
\begin{align*}
    \PP(\tau \ge T) \ge 1-\eps.
\end{align*}
Moreover for all $\theta, \theta_1\in [0,\frac{1}{2})$ and $ \theta_2\in \N$,
\begin{align*}
\omega &\in H^{\theta, p}_{{\rm loc}}([0, \tau); H^{-2\theta}_g)
\quad a.s. \\ 
\omega &\in C([0, \tau); B^{{-\frac{2}{p}}}_{g,2,p}) \quad a.s.\\
\omega &\in C^{\theta_1, \theta_2}_{{\rm loc}} ((0, \tau)\times \T^3) \quad a.s.
\end{align*} 
\end{theorem}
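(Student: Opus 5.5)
The plan follows the strategy outlined in the introduction, working with the cutoff system \eqref{main_ito_cutoff}. Fix $r\in(-\frac12,0)$ such that $B^{-2/p}_{g,2,p}$ embeds into $H^r_g$. Since $p>4$ we have $-2/p>-\frac12$, so any $r<-2/p$ works, and by \autoref{lemma_interpolation} this embedding is even compact.

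\textbf{Well-posedness and regularity.} We first use the results of \autoref{sec:well_posed}. These give a unique maximal $L^p$-solution $(\omega,\tau)$ of \eqref{main_ito}. For each $R$ they also give a global solution $\omega^{n,R}$ of the cutoff problem with noise \eqref{def_coefficients} at level $n$. Both come from the critical stochastic maximal regularity theory of \cite{agresti_nonlinear_2022a,agresti_nonlinear_2022}, set on the pair $(X_0,X_1)$ with trace space $B^{-2/p}_{g,2,p}$ from \autoref{semigroup}. The key input is the $H^{-1}$ estimate
\[
\textstyle\sum_k\|C^{k,n}\omega\|_{H^{-1}}^2\le\frac14\langle -A\omega,\omega\rangle_{H^{-1}}+C\|\omega\|_{H^{-1}}^2,
\]
uniform in $n$. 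The regularity claims ($H^{\theta,p}$ in time, continuity in $B^{-2/p}_{g,2,p}$, and local smoothness) then follow from the same theory plus the usual bootstrap via instantaneous regularization.

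By uniqueness, $\omega=\omega^{n,R}$ on $[0,\tau_R)$, where $\tau_R$ is the first time $\|\omega^{n,R}\|_{H^r_g}$ reaches $R$. Therefore
\[
\PP(\tau\ge T)\ge\PP\Big(\sup_{t\le T}\|\omega^{n,R}_t\|_{H^r_g}<R\Big).
\]

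\textbf{Deterministic limit.} Next we study \eqref{main_det_cutoff_intro}. We test it in $H^r_g$ and bound the nonlinearity using $\phi_R$ together with subcriticality, i.e.\ interpolation between $H^r_g$ and $H^{r+1}_g$. This shows that for $\nu$ large, depending only on $M$ (through $\|\overline\omega_0\|_{H^r_g}\lesssim M$) and not on $R$, $\|\overline\omega(t)\|_{H^r_g}$ decays exponentially. In particular it stays below some $R_0(M)$ on $[0,T]$. We choose $R=R_0+2$, so the cutoff is inactive along the limit trajectory.

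\textbf{Scaling limit (main obstacle).} The crux is to show that, for fixed $\nu$ and $R$,
\[
\sup_{\|\omega_0\|_{B^{-2/p}_{g,2,p}}\le M}\PP\Big(\sup_{t\le T}\|\omega^{n,R}_t-\overline\omega_t\|_{H^r_g}>1\Big)\to0 \quad\text{as } n\to\infty.
\]
The steps are as follows.
\begin{enumerate}
\item From the uniform $H^{-1}$ bound and stochastic maximal $L^p$-regularity, obtain bounds uniform in $n$ in $L^p(\Omega;H^{\theta,p}(0,T;H^{-2\theta}_g))\cap L^p(\Omega;C([0,T];B^{-2/p}_{g,2,p}))$.
\item Deduce tightness in $C([0,T];H^r_g)$ by Aubin--Lions-type compactness.
\item Identify limits. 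The Itô correction was already absorbed into $\nu\Delta$ by \autoref{hyp_noise} and \cite[Lemma 2.13]{butori_meanfield_2025}. The martingale term tends to zero in $H^{-2-\delta}_g$ because its quadratic variation against a test function $\varphi$ is of order $\nu\sum_k\theta_m^2|\langle \omega,\LL_{\sigma_k}^*\varphi\rangle|^2$. This scales like $\|\theta^n\|_{\ell^\infty}^2\to0$, since $\theta^n$ is supported on high modes with $\|\theta^n\|_{\ell^2}$ normalized.
\item Use uniqueness for \eqref{main_det_cutoff_intro} and Gyöngy--Krylov to get convergence in probability.
\end{enumerate}
The delicate points are the convergence of the martingale, where the stretching part $\omega\cdot\nabla\sigma_k$ adds a factor $|m|$ that must be offset by testing against smooth $\varphi$, and uniformity over initial data. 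For the latter we argue by contradiction: take a sequence of initial data $\omega_0^n$ violating the bound, extract a subsequence converging in $H^r_g$ by compactness of the ball of $B^{-2/p}_{g,2,p}$, and use continuous dependence of $\overline\omega$ on the initial data.

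\textbf{Conclusion.} Finally we choose $n$ large enough that this probability is less than $\eps$. On the complementary event, $\sup_{t\le T}\|\omega^{n,R}_t\|_{H^r_g}\le R_0+1<R$, and therefore $\tau\ge T$.
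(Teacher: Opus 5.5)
Your proposal is correct, and its architecture matches the paper's. Both use a cutoff in a subcritical $H^r_g$ norm and $n$-uniform stochastic maximal regularity bounds driven by the $H^{-1}$ noise estimate. Both then pass through tightness and Skorokhod, and identify the limit, with the martingale vanishing because $\|\theta^n\|_{\ell^\infty}\to0$ once the curl is moved onto the test function. A Dirac limit gives convergence in probability, so Gy\"ongy--Krylov is not actually needed. Uniformity over $\B_p(M)$ comes by contradiction, and the proof ends with the localization $\tau\ge\tau_R$.

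In the contradiction step, say explicitly that you rerun the scaling limit with $n$-dependent data $\omega_0^n\to\omega_0$. Your tightness argument covers this because all bounds are uniform over the ball, and the paper states it.

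The genuine difference is the deterministic dissipation step.
\begin{itemize}
\item \emph{Your route.} You test the cutoff equation directly in $H^r_g$ and use the damping $2\nu(2\pi)^2\|\omega\|_{H^r_g}^2$ coming from $-\nu\Delta$. Interpolating $H^{r/2+1/4}_g$ between $H^r_g$ and $H^{r+1}_g$ and applying Young, the nonlinearity costs $C\|\omega\|_{H^r_g}^{q}$ with $q>2$, since $r>-\frac12$. A continuity argument then gives monotone exponential decay once $\nu\gtrsim\|\omega_0\|_{H^r_g}^{q-2}$. To get the independence of $R$ you claim, use only $0\le\phi_R\le1$, not $\|\omega\|_{H^r_g}\le R+1$ on the support of $\phi_R$. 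Even the latter would not be circular, since the norm is non-increasing and you could fix $R$ from $M$ first.
\item \emph{The paper's route (\autoref{lem:dissipation}).} It works with the uncut equation in $H^\gamma_g$, $\gamma>r_0$, and drops the $\nu$-damping in the $H^\gamma_g$ estimate. Smallness $\nu^{-1/4}$ comes instead from the $H^{-1}$ energy identity, where the nonlinearity cancels. A bootstrap gives a time-uniform $H^\gamma_g$ bound, and decay in $H^{r_0}_g$ follows by interpolating with the exponentially decaying $H^{-1}$ norm. It then needs \autoref{rmk:uniqueness} to identify this global solution with the cutoff $p$-solution.
\end{itemize}
Your route is shorter and bypasses both the global theory of the uncut deterministic equation and that identification. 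The paper's keeps the space carrying the uniform bound separate from the space where decay is measured, a structure it reuses with the shifting exponents $\gamma^i, r_0^i$ in the gluing proof of \autoref{thm_global}.

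A smaller difference: you take time regularity from the SMR $H^{\theta,p}$ bound, while the paper uses a separate BDG-based H\"older estimate. Yours is fine provided the SMR constants are uniform in $n$, which holds because they depend only on $\|\theta^n\|_{\ell^2}$.
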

\begin{theorem}\label{thm_global}
Let $p\in ( 4, \infty)$, $\eps\in(0,1)$ and $M\ge 1$. Then, 
there exists $\nu>0$ and noise coefficients $\theta(t)$ satisfying \autoref{hyp_noise} such that, for all initial data for which
\begin{align*}
    \omega_0 \in B^{{-\frac{2}{p}}}_{g,2,p}, \qquad \|\omega_0\|_{B^{{-\frac{2}{p}}}_{g,2,p}} \le M
\end{align*} 
there exists a maximal $L^p$ solution $(\omega, \tau)$ of equation \eqref{main_ito} with 
\begin{align*}
    \PP(\tau =+\infty) \ge 1-\eps.
\end{align*}
Moreover for all $\theta,\theta_1\in [0,\frac{1}{2})$, and  $\theta_2\in \N$,
\begin{align*}
\omega &\in H^{\theta, p}_{\text{loc}}([0, \tau); H^{-2\theta}_g)
\quad a.s. \\ 
\omega &\in C([0, \tau); B^{{-\frac{2}{p}}}_{g,2,p}) \quad a.s.\\
\omega &\in C^{\theta_1, \theta_2}_{\text{loc}} ((0, \tau)\times \T^3) \quad a.s.
\end{align*} 
\end{theorem}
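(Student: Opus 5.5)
We prove \autoref{thm_global} by a gluing argument that iterates the finite-horizon mechanism behind \autoref{thm:main}. The key is a version of the scaling-limit convergence that is uniform over \emph{random} initial data bounded in $B^{-2/p}_{g,2,p}$. Fix $p>4$, $\eps$, $M$, and choose $r\in(-\tfrac12,0)$ with $B^{-2/p}_{g,2,p}\hookrightarrow H^r_g$ (compactly after a small loss).

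\textbf{Step 1: deterministic decay.} For the limiting equation \eqref{main_det_cutoff_intro}, run an energy estimate in $H^r_g$. The extra term $-\nu\Delta$ gives decay at rate $\sim\nu$, and the nonlinearity is handled by subcriticality of $H^r_g$. Hence, for $\nu=\nu(M)$ large, every solution with $\|\overline\omega_0\|_{B^{-2/p}_{g,2,p}}\le 2M$ satisfies two bounds, both uniformly in the cutoff level $R$:
\begin{align*}
\sup_{t\le 1}\|\overline\omega(t)\|_{H^r_g}&\le C(M),\\
\|\overline\omega(t)\|_{B^{-2/p}_{g,2,p}}&\le M/2 \quad \text{for } t\ge 1.
\end{align*}
The second bound uses parabolic smoothing from time $t/2$ to $t$ in order to regain the $B^{-2/p}_{g,2,p}$ norm. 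Fix $R=C(M)+1$ and this $\nu$ once and for all, so that the same $\nu$ serves at every step.

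\textbf{Step 2: uniform scaling limit.} Show that for every $\delta,\eta>0$ there is $n_0$ such that, for all $n\ge n_0$ and all $\mathcal F_{T_0}$-measurable data with $\|\omega_{T_0}\|_{B^{-2/p}_{g,2,p}}\le M$ a.s., the cutoff solution $\omega^n$ on $[T_0,T_0+1]$ satisfies
\[
\PP\Big(\sup_{t\in[T_0,T_0+1]}\|\omega^n_t-\overline\omega_t\|_{B^{-2/p-\kappa}_{g,2,p}}>\delta\Big)\le\eta,
\]
where $\overline\omega$ is the deterministic flow started pathwise from $\omega_{T_0}$. This argument has three parts.
\begin{itemize}
\item Uniform-in-$n$ stochastic maximal $L^p$-regularity bounds follow from the $H^{-1}$ coercivity estimate $\sum_k\|C^{k,n}\omega\|^2_{H^{-1}}\le\frac14\langle -A\omega,\omega\rangle_{H^{-1}}+C\|\omega\|^2_{H^{-1}}$.
\item Tightness then follows, and we identify the limit: the martingale term vanishes because $\sigma^n_k$ concentrates at high modes, leaving only the It\^o corrector $-\nu\Delta$.
\item Uniformity over the data is obtained by contradiction. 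Take a bad sequence of data, extract a jointly convergent subsequence (data together with solutions), and use continuity of the deterministic flow in the data.
\end{itemize}
The regularity loss $\kappa$ is then recovered for $t\ge1/2$ by the smoothing in Step 1 together with maximal regularity on the stochastic equation restarted at $T_0+1/2$. This yields a bound on $\|\omega^n_{T_0+1}\|_{B^{-2/p}_{g,2,p}}$ as well.

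\textbf{Step 3: iteration.} Set $\overline T_i=i$ and choose $n(i)$ from Step 2 with $\eta=\eps 2^{-i-1}$ and $\delta$ small enough that two conclusions hold on the good event:
\begin{itemize}
\item $\|\omega\|_{H^r_g}\le R$ on $[\overline T_{i-1},\overline T_i]$, so the cutoff is inactive;
\item $\|\omega_{\overline T_i}\|_{B^{-2/p}_{g,2,p}}\le M$.
\end{itemize}
To restart, project $\omega_{\overline T_i}$ onto the ball of radius $M$ (it equals the true value on the good event) and apply Step 2 on the next interval. On the intersection of all good events, which has probability at least $1-\eps$, the cutoff solution is never truncated. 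By uniqueness from \autoref{sec:well_posed}, it therefore coincides with the maximal $L^p$ solution of \eqref{main_ito}, so $\tau=\infty$. The regularity statements are local and follow as in \autoref{thm:main}.

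The main obstacle is Step 2. The convergence must be uniform over random $\mathcal F_{T_0}$-measurable data, and the endpoint norm $B^{-2/p}_{g,2,p}$ must be recovered despite the compactness loss. I would handle the latter with a short-time smoothing argument applied to the difference $\omega^n-\overline\omega$.
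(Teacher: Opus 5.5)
Your architecture matches the paper's: deterministic decay for large $\nu$, a scaling limit uniform over $\mathcal F_{T_0}$-measurable data, gluing on nested good events, and identification with the maximal solution by uniqueness. However, the step your fixed-$p$ iteration depends on is not established, and the mechanism you suggest for it does not deliver it.

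\textbf{What your Step 3 needs.} To restart at $\overline T_i$ with the same $p$ and the same radius $M$, you need $\PP(\|\omega^n_{\overline T_i}\|_{B^{-2/p}_{g,2,p}}>M)\le \eps 2^{-i-1}$ for $n$ large. That is essentially convergence in probability in the endpoint trace norm. Compactness only gives convergence in $H^{r_0}_g$ with $r_0<-2/p$.

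\textbf{Why your recovery step fails.}
\begin{enumerate}
\item Maximal regularity for the stochastic equation restarted at $T_0+\frac12$ gives only a moment bound of size $C(\nu)(1+M^p)$ on $\|\omega^n_{T_0+1}\|^p_{B^{-2/p}_{g,2,p}}$. A bound is not smallness. Via Chebyshev it yields a fixed failure probability, not a summable one. With unit intervals and $\nu$ fixed, the deterministic profile at $T_0+\frac12$ is not arbitrarily small either.
\item Running maximal regularity on the difference $\omega^n-\overline\omega$ fails for a structural reason. The difference is forced by the additive term $\sqrt\nu\sum_k\LL_{\sigma^n_k}\overline\omega\,\mathrm{d}W^k$. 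For smooth $\overline\omega$, the stretching part $\overline\omega\cdot\nabla\sigma^n_k$ alone makes $\sum_k\|\LL_{\sigma^n_k}\overline\omega\|^2_{H^{-1}}$ of order $\|\overline\omega\|^2_{L^2}$, uniformly in $n$.
\item $L^p(0,T;\ell^2(H^{-1}))$ is exactly the norm that controls $C([0,T];B^{-2/p}_{g,2,p})$ in stochastic maximal $L^p$-regularity. So you get an $O(1)$ bound on the difference, not $o(1)$.
\item This forcing is small only in weaker norms, which brings the loss back. You also cannot move to a higher regularity scale, because $\sum_k\|\LL_{\sigma^n_k}\omega\|^2_{H^{-1+\epsilon}}$ grows like $n^{2\epsilon}$.
\end{enumerate}

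\textbf{How the paper avoids this.} The paper never recovers the endpoint norm. It takes $p_i\downarrow p_\infty>4$ with $p_0=p$, and $-\frac12<r^i<r^i_0<\gamma^i<-\frac{2}{p_i}$, $r^i=-\frac{2}{p_{i+1}}$. Then the $H^{r^i_0}_g$-control at $\overline T_{i+1}$ embeds into $B^{-2/p_{i+1}}_{g,2,p_{i+1}}$, and the exponents never drop below $p_\infty>4$. It chooses interval lengths $T_i\ge 1$ large enough that $e^{-\eta_iT_i}R^i_0\le M_i/C_i$. A single $\nu$ works for all $i$ because the $\gamma^i$ lie in a compact interval. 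It then glues the pieces and returns to $p$ via the $p_\infty$-solution and compatibility of $L^p$-solutions.

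\textbf{A possible repair if you keep $p$ fixed.} What is missing is a uniform-in-$n$ bound at the restart time in a space compactly embedded in $B^{-2/p}_{g,2,p}$. For instance, one could aim for $\sup_n\EE\|\omega^n_{T_0+1}\|^q_{B^{-2/q}_{g,2,q}}<\infty$ for some $q>p$. This should be obtainable from time-weighted stochastic maximal $L^q$-regularity on the same ground space $H^{-2}_g$. The weight $t^{q/p-1}$ makes $B^{-2/p}_{g,2,p}\subset B^{-2/p}_{g,2,q}$ admissible data, and \autoref{SMR_perturbation_estimate} is uniform in $n$. Interpolating this bound with the $H^{r_0}_g$-convergence (together with deterministic smoothing of $\overline\omega$) would give convergence in probability in $B^{-2/p}_{g,2,p}$ at $\overline T_i$, which is what your Step 3 requires.
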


\section{Well-Posedness Results}
\label{sec:well_posed}
We collect in this section the basic well-posedness results on \eqref{main_ito} and \eqref{main_ito_cutoff}; we refer to \autoref{sec:proof_local_well} below for their proofs.

As a direct consequence of \autoref{SMR_maximal_thm}, we obtain the local well-posedness of \eqref{main_ito}.

\begin{proposition}\label{prop:local_well_nonlinear}
    Let $4\leq p<+\infty$ and $\theta$ satisfying \autoref{hyp_noise}. Then for all $\omega_s\in L^0_{\mathcal{F}_s}\big(\Omega, B^{-\frac{2}{p}}_{g,2,p}\big),$ there exists a unique local p-solution $(\omega,\tau)$ of \eqref{main_ito} satisfying $\tau>s$ $\mathbb{P}-a.s.$ and
    \begin{align}
        \omega&\in H^{\theta,p}_{loc}\left([s,\tau);H_g^{-2\theta}\right)\quad \mathbb{P}-a.s. \quad  \text{for all } \theta\in [0,1/2)\\
        \omega&\in C\big([s,\tau);B^{-\frac{2}{p}}_{g,2,p}\big)\quad \mathbb{P}-a.s. 
    \end{align}
    Moreover, the solution $(\omega,\tau)$ instantaneously regularizes in
time and space:
\begin{align}
\label{eq:instantaneous_regularization_omega}
    \omega\in C^{\theta_0,\theta_1}_{{\rm loc}}\left((s,\tau)\times \T^3;\R^3\right) \quad \mathbb{P}-a.s. \quad \text{for all }\theta_0\in [0,1/2),\ \theta_1<+\infty.
\end{align}
\end{proposition}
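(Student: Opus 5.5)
We will obtain the result by applying the abstract critical-space theory for stochastic evolution equations of Agresti--Veraar, \autoref{SMR_maximal_thm}, in the triple $X_0=H^{-2}_g$, $X_{1/2}=H^{-1}_g$, $X_1=H^0_g$. We use the time weight $\kappa=0$, so the trace space is $(X_0,X_1)_{1-1/p,p}=B^{-2/p}_{g,2,p}$ by \eqref{trace_spaces}. The equation \eqref{main_ito} is written as
\[
\mathrm{d}\omega+(\mathcal{A}\omega-F(\omega))\,\mathrm{d}t=\sum_k \mathcal{B}_k\omega\,\mathrm{d}W^k,
\]
with the following pieces:
\begin{itemize}
\item $\mathcal{A}=-A-\nu\Delta$;
\item $F(\omega)=-\Pi\LL_{K\omega}\omega$;
\item $\mathcal{B}_k\omega=-\sqrt{\nu}\,\Pi\LL_{\sigma_k}\omega$.
\end{itemize}
On each interval $[\overline{T}_i,\overline{T}_{i+1})$ the coefficients are constant and finitely many. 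We therefore apply the theorem interval by interval, starting at time $s$, and glue.

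\textbf{Step 1: stochastic maximal regularity of the linear part.} By \autoref{semigroup}, $-A$ has a bounded $H^\infty$-calculus of angle zero on the Hilbert space $X_0$, so $-A$ has stochastic maximal $L^p$-regularity. The term $-\nu\Delta$ has symbol $\nu|m|^2=o(|m|^2g(|m|))$ by \autoref{hp_correction}, so it is a lower-order perturbation $X_{1}\to X_0$ of arbitrarily small relative bound.

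For the noise we need $\sum_k\|\mathcal{B}_k\omega\|_{X_{1/2}}^2\le \eta\|\omega\|_{X_1}^2+C\|\omega\|_{X_0}^2$ with $\eta$ below the threshold allowed by the perturbation theorem for stochastic maximal regularity. Since $\mathcal{B}_k$ is a first-order operator with smooth coefficients and there are finitely many of them, this follows from
\[
\|\mathcal{B}_k\omega\|_{H^{-1}}\lesssim \|\omega\|_{L^2}\lesssim \epsilon\|\omega\|_{H^0_g}+C_\epsilon\|\omega\|_{H^{-2}_g}.
\]
The last bound uses $g\to\infty$: the $H^0_g$ norm carries the weight $g(|m|)$ on high frequencies, which beats the $L^2$ weight. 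The Itô corrector $\nu\Delta$ has already been absorbed into $\mathcal{A}$.

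\textbf{Step 2: critical nonlinearity estimate.} We must check $\|F(\omega)-F(\omega')\|_{X_0}\lesssim \sum_{j}(1+\|\omega\|_{X_{\beta_j}}+\|\omega'\|_{X_{\beta_j}})\|\omega-\omega'\|_{X_{\beta_j}}$ with $\beta_j\in(1-1/p,1)$ and subcriticality $2\beta_j\le 1+1-1/p$, i.e.\ $\beta_j\le 1-\frac{1}{2p}$. Here $X_\beta=H^{2(\beta-1)}_g\hookrightarrow H^{2(\beta-1)}$. Since $F$ is bilinear, it suffices to bound $\|\LL_{K\omega}\omega'\|_{H^{-2}}$.

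Write $\LL_{u}\omega'=\div(u\otimes\omega'-\omega'\otimes u)$. Then
\[
\|\LL_u\omega'\|_{H^{-2}}\lesssim \|u\otimes\omega'\|_{H^{-1}}\lesssim \|u\otimes\omega'\|_{L^{6/5}}\le \|u\|_{L^{3}}\|\omega'\|_{L^2},
\]
and more sharply, by Sobolev product estimates, one bounds it by $\|\omega\|_{H^{\sigma}}\|\omega'\|_{H^{\sigma}}$ with $\sigma=-\tfrac14$. The scaling check is $2(1+\sigma)+\dots=\tfrac32+\dots$, since $H^{-1/2}$ is critical for vorticity. This gives $2(\beta-1)=-\frac14$, i.e.\ $\beta=\frac78$.

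The constraints require $\beta>1-\frac1p$, i.e.\ $p<8$, and $\beta\le 1-\frac{1}{2p}$, i.e.\ $p\ge 4$. The critical exponent $p=4$ is exactly the equality case, which matches the hypothesis $p\ge4$. For $p\ge 8$ we instead take $\beta=1-\frac1{2p}$ with the corresponding product estimate in $H^{2(\beta-1)}$. The weight $g^{s+1}$ only helps, since $g\ge g(1)>0$.

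\textbf{Step 3: conclusion.} \autoref{SMR_maximal_thm} then yields a unique local $p$-solution with $\tau>s$ a.s. The $H^{\theta,p}_{\rm loc}([s,\tau);H^{-2\theta}_g)$ and $C([s,\tau);B^{-2/p}_{g,2,p})$ regularity follows from the maximal regularity space $H^{\theta,p}(X_{1-\theta})$, using \eqref{fractional_powers}. Instantaneous regularization \eqref{eq:instantaneous_regularization_omega} follows from the parabolic bootstrapping part of the theory. We use time weights $\kappa>0$ to raise the trace regularity, iterating in $p$ and in the scale $H^{s}_g$. This works because the noise coefficients are $C^\infty$ and the nonlinearity is polynomial, and Sobolev embedding gives $C^{\theta_1}$ in space.

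\textbf{Main obstacle.} The delicate step is Step 2 at the critical exponent $p=4$. There the product estimate must be sharp in the $g$-modified scale, and we must verify the precise numerology of \autoref{SMR_maximal_thm}, including that the $g$-weights do not break the embedding $H^{2(\beta-1)}_g\hookrightarrow H^{2(\beta-1)}$ used for the products. A secondary issue is ensuring that the lower-order stretching part $\omega\cdot\nabla\sigma_k$ of the noise does not spoil the smallness needed for the perturbation argument in Step 1.
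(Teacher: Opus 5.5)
Your proposal is correct and follows the paper's route: \autoref{SMR_maximal_thm} in the scale $X_0=H^{-2}_g$, $X_1=H^0_g$, with the bilinear bound in $X_{7/8}=H^{-1/4}_g$ (critical exactly at $p=4$), and $\mathcal{SMR}^\bullet_p$ obtained perturbatively because both $\nu\Delta$ and the Lie noise (bounded from $L^2$ to $H^{-1}$) are of strictly lower order than $A$ thanks to $g\to\infty$. The differences are minor: you glue over the intervals $[\overline{T}_i,\overline{T}_{i+1})$ instead of treating $B(t)$ as time dependent with the $t$-uniform bound of \autoref{lem:noise_estimate}, and for $p\ge 8$ you raise $\beta$ to $1-\frac{1}{2p}$, since $\beta=7/8$ then violates $\beta>1-1/p$; this is a correct fix of a point the paper passes over.
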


In case of the truncated equation \eqref{main_ito_cutoff}, global well-posedness holds.
\begin{proposition}\label{prop:global_well_cutoff}
 Let $4< p<+\infty$ and $-\frac{1}{2}<r<-\frac{2}{p}$. For all $\omega_0\in B^{-2/p}_{g,2,p},$ there exists a unique local p-solution $(\omega,\tau)$ of \eqref{main_ito_cutoff} satisfying $\tau=+\infty$ $\mathbb{P}-a.s.$ and
 \begin{align*}
     \omega&\in H^{\theta,p}_{loc}\left([0,+\infty);H_g^{-2\theta}\right)\cap  C\big([0,+\infty);B^{-\frac{2}{p}}_{g,2,p}\big)\quad \mathbb{P}-a.s.\quad  \text{for all } \theta\in [0,1/2).
 \end{align*}
 Moreover, for all $T>0$, there exists $C=C(T)$ such that
    \begin{align}\label{a_priori_estimate_cutoff}
     \mathbb{E}\Big[\sup_{s\in [0,T]}\|\omega_s\|^p_{B^{-\frac{2}{p}}_{g,2,p}}\Big]+\mathbb{E}\int_0^T \|\omega_s\|_{H^0_g}^p \,\mathrm{d}s\leq C(T)\Big(1+\|\omega_0\|_{B^{-\frac{2}{p}}_{g,2,p}}^p\Big) .
    \end{align}
    Moreover the hidden constant $C(T)$ depends only on $\sup_{t\in [0,T]}\|\theta\|^2_{\ell^2}$, and not on the particular choice of frequency localization ($n>1$) of the noise (c.f. \autoref{hyp_noise}). 
\end{proposition}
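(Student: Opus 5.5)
The proof follows the critical-variational / stochastic maximal $L^p$-regularity framework of \cite{agresti_nonlinear_2022a, agresti_nonlinear_2022}, recalled in \autoref{SMR_maximal_thm}, in the Hilbert setting $X_0=H^{-2}_g$, $X_1=H^0_g$. The trace space is $(X_0,X_1)_{1-1/p,p}=B^{-2/p}_{g,2,p}$ by \eqref{trace_spaces}. The plan is in four steps: first local existence for the truncated equation \eqref{main_ito_cutoff}, then a blow-up criterion, then an energy-type a priori bound in $H^{-1}$ that rules out blow-up, and finally an upgrade of that bound to \eqref{a_priori_estimate_cutoff} via maximal regularity.

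\textbf{Step 1 (linear part and nonlinearity).} Take the linear operator to be $-A-\nu\Delta$ on $X_0$ and the noise operator $B\omega=(\sqrt\nu\,\LL_{\sigma_k}\omega)_k$ from $X_1$ to $\ell^2(I;X_{1/2})$. Because $\theta\in\ell^2$ and each $\sigma_k$ is smooth (finitely many modes), $B$ is bounded $X_1\to \ell^2(X_{1/2})$.

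The key structural input is the coercivity inequality
\[
\sum_k\|\LL_{\sigma_k}\omega\|_{H^{-1}}^2\le \tfrac14\langle -A\omega,\omega\rangle_{H^{-1}}+C\|\omega\|_{H^{-1}}^2 .
\]
Here $C$ depends only on $\|\theta\|_{\ell^2}$, since the $\sigma_k\cdot\nabla$ part is first order, the stretching part is order zero in $H^{-1}$ after duality, and $g\to\infty$ absorbs the top order. This gives stochastic maximal $L^p$-regularity for the linear pair (e.g.\ via the $H^\infty$-calculus in \autoref{semigroup} plus perturbation, or via the variational setting in $H^{-1}$), with constants depending only on $\|\theta\|_{\ell^2}$.

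For the nonlinearity $F(\omega)=\phi_R(\|\omega\|_{H^r_g})\LL_{K\omega}\omega$, use the product estimate
\[
\|\LL_{K\omega}\omega\|_{H^{-2}_g}\lesssim\|\omega\|_{H^{-1/2}}^2\lesssim \|\omega\|_{X_{3/4}}^2 .
\]
This is a critical/subcritical bilinear bound with weight $\beta=3/4$, so $F$ satisfies the local Lipschitz conditions for $p\ge4$, since $(1+1/p)/2\ge... \beta$ requires $\beta\le 1-\frac{1}{2}(1-\frac1p)$ up to $p\ge4$. \autoref{SMR_maximal_thm} then gives a unique maximal solution $(\omega,\tau)$ with the stated regularity, together with the blow-up criterion
\[
\PP\Bigl(\tau<T,\ \sup_{t<\tau}\|\omega\|_{B^{-2/p}_{g,2,p}}+\|\omega\|_{L^p(0,\tau;X_1)}<\infty\Bigr)=0 .
\]

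\textbf{Step 2 (global existence via the cutoff).} Because of the cutoff, $\|\omega\|_{H^r_g}\le R+1$ whenever $F\neq0$. Since $r>-1/2$, interpolation between $H^r_g$ and $X_1$ yields
\[
\|F(\omega)\|_{X_0}\lesssim_R \|\omega\|_{X_{3/4}}^{2}\,\mathbf 1_{\{\|\omega\|_{H^r_g}\le R+1\}}\lesssim_R \|\omega\|_{X_1}^{1-\delta}
\]
for some $\delta>0$, using $r$ strictly above $-1/2$. So $F$ is sublinear in the maximal-regularity norm.

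Apply the stochastic maximal regularity estimate on $[0,\tau_j\wedge T]$, where $\tau_j$ is a localizing sequence. Young's inequality absorbs the sublinear term, and stopping-time arguments plus Burkholder--Davis--Gundy give \eqref{a_priori_estimate_cutoff} with $C(T)$ independent of $j$. Letting $j\to\infty$ and using the blow-up criterion gives $\tau=\infty$ a.s. All constants enter only through the maximal-regularity constant and the coercivity constant $C$, which depend on $\sup_t\|\theta\|_{\ell^2}^2$ and not on $n$. For piecewise-constant $\theta$, iterate this over the intervals $[\overline T_i,\overline T_{i+1})$.

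\textbf{Main obstacle.} The hardest step is proving the stochastic maximal regularity estimate with constants uniform in $n$. The noise is not small relative to $-A$ in $X_0=H^{-2}_g$ at the critical order, so a naive perturbation argument fails. I would handle this by first establishing the estimate in the $H^{-1}$ variational (Hilbert) setting via the coercivity inequality above, using Itô's formula for $\|\omega\|_{H^{-1}}^2$. I would then bootstrap to $L^p$ in time through the $p$-independent structure of \cite{agresti_nonlinear_2025}, where maximal regularity transfers from $p=2$ to $p>2$ for operators with $H^\infty$-calculus and a noise of the same order satisfying the coercivity condition.
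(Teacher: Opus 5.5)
Your skeleton matches the paper's: stochastic maximal $L^p$-regularity on $X_0=H^{-2}_g$, $X_1=H^0_g$ with trace space $B^{-2/p}_{g,2,p}$; local existence from \autoref{SMR_maximal_thm}; then the blow-up criterion plus an SMR bound in which the cut-off nonlinearity is sublinear in $\|\omega\|_{H^0_g}$ and is absorbed by Young. But two steps fail as written.

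First, the bilinear estimate $\|\LL_{K\omega}\omega\|_{H^{-2}_g}\lesssim\|\omega\|_{H^{-1/2}}^2$ is false. For $g(r)=\log(1+r)$ one has $H^{-2}_g\hookrightarrow H^{-2-\epsilon}$ for every $\epsilon>0$. On the other hand, $\omega\in H^{-1/2}$ only gives $u=K\omega\in H^{1/2}\hookrightarrow L^3$, so $u\otimes u$ is in general no better than $H^{-1/2}$, and $\LL_u\omega=\nabla\times\div(u\otimes u)$ no better than $H^{-5/2}$. Scaling puts the threshold at $H^{-1/4}$. The correct bound is \autoref{lem:nonlinear_est}, $\|\LL_u\omega\|_{H^{-2}_g}\lesssim\|\omega\|_{H^{-1/4}_g}^2$ via $H^{3/4}\hookrightarrow L^4$, i.e.\ $\beta=7/8$. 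Then the condition $\rho(\beta-1+\frac1p)+\theta\le1$ with $\rho=1$, $\theta=\beta$ reads $\frac34+\frac1p\le 1$, which is where $p\ge4$ comes from; your arithmetic does not produce it. The same correction is what makes your Step 2 work. Interpolating $H^{-1/4}_g$ between $H^r_g$ and $H^0_g$ gives $\|\omega\|^2_{H^{-1/4}_g}\le\|\omega\|_{H^r_g}^{1+k}\|\omega\|_{H^0_g}^{1-k}$ with $1+k=-\frac{1}{2r}$, and $k>0$ exactly because $r>-\frac12$. With your $X_{3/4}=H^{-1/2}_g$ the chain is inconsistent: $H^r_g\hookrightarrow H^{-1/2}_g$ for $r>-\frac12$, so no interpolation with $X_1$ would be needed.

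Second, your ``main obstacle'' is misdiagnosed, and the remedy you propose is not sound. The noise is not of critical order. By \autoref{lem:noise_estimate} (using $\LL_\sigma\omega=\nabla\times(\sigma\times\omega)$), $\sum_k\|\LL_{\sigma_k}\omega\|_{H^{-1}}^2\le 3\|\omega\|_{L^2}^2$. Since $g\to\infty$, $\|\omega\|_{L^2}^2\le\epsilon\|\omega\|_{H^0_g}^2+C_\epsilon\|\omega\|_{H^{-1}}^2$ for every $\epsilon>0$. So your coercivity inequality holds with any $\epsilon$ in place of $\frac14$. That means $B:X_1\to\ell^2(X_{1/2})$, like $\nu\Delta:X_1\to X_0$, is a strictly lower-order perturbation of $A$.

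The route you mention in Step 1 and then reject ($H^\infty$-calculus plus perturbation) is therefore exactly what works. The operator $-A$ has a bounded $H^\infty$-calculus of angle $0$ on $H^{-2}_g$, and perturbation gives $(A+\nu\Delta,\sqrt\nu B)\in\mathcal{SMR}^\bullet_p$ for every $p\ge 2$. The constants depend only on $\nu,g,p,T$ and $\|\theta\|_{\ell^2}$, hence are uniform in $n$. This is precisely \autoref{SMR_linear_prop} combined with \autoref{SMR_perturbation_estimate}.

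By contrast, the principle you invoke is not available: that a variational $p=2$ estimate transfers to $p>2$ for noise of the same order as $A$ under coercivity alone. For critical-order gradient noise, the $L^p$ stochastic parabolicity condition genuinely depends on $p$ (Brze\'{z}niak--Veraar). So if the noise really were critical, your plan would break down at exactly this step. With these two corrections, your argument becomes the paper's.
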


\begin{remark}\label{rem:stoch2det}
    Clearly, the analogous statement of \autoref{prop:global_well_cutoff}, without the expected value, holds as a consequence also for the deterministic system with cutoff, obtained by setting $\theta_m = 0$ for every $m\in \Z_0^3$. 
\end{remark}

\subsection{Proofs of well posedness for the nonlinear problems}\label{sec:proof_local_well}
In order to prove \autoref{prop:local_well_nonlinear} and  \autoref{prop:global_well_cutoff} we will use \cite[Theorem 4.7]{agresti_nonlinear_2025} which we recall in the \autoref{sec:av_theory}.
First, we need two lemmas to check \autoref{SMR_nonlinear_hp}.
\begin{lemma}\label{lem:nonlinear_est}
    For all $g$ verifying \autoref{hp_correction} and every $\xi, \omega \in H^{-1/4}_g$, denoting $v=K[ \xi], \ u = K[\omega]$ it holds 
    \begin{equation}
        \|\nabla \times (\div (u\otimes v))\|_{H^{-2}_g}\le \|\omega \|_{H_g^{-1/4}}\|\xi \|_{H_g^{-1/4}}. 
    \end{equation}
    As a consequence, it holds 
    \begin{equation}
        \|\LL_u\omega - \LL_{v}\xi\|_{H^{-2}_g}\le C( \|\omega\|_{H_g^{-1/4}} + \|\xi\|_{H_g^{-1/4}})\|\omega - \xi\|_{H^{-1/4}_g}.
    \end{equation}
\end{lemma}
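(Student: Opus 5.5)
We work at the level of velocities and reduce the estimate to a bilinear bound in an $L^2$-based space. Since $\omega,\xi$ are divergence free, $\nabla\times$ maps $H^{s}_g$-type velocity spaces to vorticity spaces with a loss of one derivative, and $K$ gains one derivative. The first step is therefore to note that
\[
\|\nabla\times F\|_{H^{-2}_g}\lesssim \|\Pi F\|_{H^{-1}}\Big(\sup_{|m|\ge 2\pi}\tfrac{1}{g(|m|)^{1/2}}\Big)\lesssim \|F\|_{H^{-1}},
\]
using the definition of $H^{-2}_g$: the weight is $|m|^{-4}g(|m|)^{-1}$, and $g$ is increasing with $g(2\pi)\ge g(1)>0$. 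Taking $F=\div(u\otimes v)$ and using $\|\div G\|_{H^{-1}}\le\|G\|_{L^2}$, it suffices to bound $\|u\otimes v\|_{L^2}\le \|u\|_{L^4}\|v\|_{L^4}$.

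The second step uses Sobolev embedding $H^{3/4}(\T^3)\hookrightarrow L^4(\T^3)$ (sharp, since $3/4=3(1/2-1/4)$). We get $\|u\|_{L^4}\lesssim\|u\|_{H^{3/4}}\lesssim\|\omega\|_{H^{-1/4}}$, because $K$ is an isomorphism $H^{-1/4}_{\div}\to H^{3/4}_{\div}$ on zero-mean fields. Finally, $\|\omega\|_{H^{-1/4}}\lesssim\|\omega\|_{H^{-1/4}_g}$: the $H^{-1/4}_g$ weight is $|m|^{-1/2}g(|m|)^{3/4}\ge g(2\pi)^{3/4}|m|^{-1/2}$ by monotonicity. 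The same holds for $v$ and $\xi$. This yields the first inequality up to a constant depending on $g(1)$ and the Sobolev constants, which can be absorbed, or the norm normalized as the statement is read.

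For the consequence, we use the identity $\LL_u\omega=u\cdot\nabla\omega-\omega\cdot\nabla u=\nabla\times(\omega\times u)$, valid for divergence-free $u,\omega$ with $\omega=\nabla\times u$. Together with $\omega\times u=u\cdot\nabla u-\nabla|u|^2/2$, this gives $\LL_u\omega=\nabla\times\div(u\otimes u)$, since curls of gradients vanish and $\div(u\otimes u)=u\cdot\nabla u$. Bilinearity then gives
\[
\LL_u\omega-\LL_v\xi=\nabla\times\div\big((u-v)\otimes u\big)+\nabla\times\div\big(v\otimes(u-v)\big),
\]
with $u-v=K[\omega-\xi]$. Applying the first estimate to each term (it is symmetric in its two entries) gives the bound with $C(\|\omega\|+\|\xi\|)\|\omega-\xi\|$.

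The only delicate point is the first step: checking that the $g$-weights go in the favourable direction, i.e.\ that $H^{-2}_g$ is weaker than $H^{-3}$ relative to $H^{-1}$ for the velocity, and that $H^{-1/4}_g\hookrightarrow H^{-1/4}$. Both follow from $g\ge g(1)>0$ on the lattice. The rest is the standard $L^4$ Sobolev argument.
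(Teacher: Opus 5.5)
Your proposal is correct and follows essentially the same route as the paper. The paper also bounds $\|\nabla\times\div(u\otimes v)\|_{H^{-2}_g}$ by the $g^{-1}$-weighted $L^2$ norm of $u\otimes v$, which is controlled by $\|u\otimes v\|_{L^2}$ since $g$ is increasing and positive. It then applies H\"older, $H^{3/4}\hookrightarrow L^4$, the Biot--Savart gain and $H^{-1/4}_g\hookrightarrow H^{-1/4}$, and gets the difference estimate from the same splitting $(u-v)\otimes u+v\otimes(u-v)$. As in the paper, you obtain the first inequality only up to a constant, which is the form in which it is used later.
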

\begin{proof}
    The second statement easily follows from the first one thanks to the bilinearity of the operator $\LL$. Indeed, it holds
    \begin{equation*}
        \LL_u\omega - \LL_{v}\xi =  \nabla \times (\div(u\otimes u- v\otimes v)) = \nabla \times \left(\div \left[(u-v)\otimes u + v \otimes (u-v)\right]\right).
    \end{equation*}
    We prove the first statement. We have 
    \begin{align*}
         \|\nabla \times (\div (u\otimes v))\|_{H^{-2}_g} \lesssim \|u\otimes v\|_{H^{0}_{g^{-1}}} .
    \end{align*}
    Since $g$ is increasing, we have the embedding $L^2_{\div} \hookrightarrow {H^{0}_{g^{-1}}}$. Moreover, we have the usual Sobolev embedding $H^{3/4}\hookrightarrow L^4$, therefore we obtain 
    \begin{align*}
        \|u\otimes v\|_{H^{0}_{g^{-1}}} \le \|u\otimes v\|_{L^2} = \|u\|_{L^4}\|v\|_{L^4} \lesssim \|u\|_{H^{3/4}} \|v\|_{H^{3/4}} \lesssim \|\omega\|_{H^{-1/4}}\|\xi\|_{H^{-1/4}}.
    \end{align*}
    Finally, since for all $s>-1$ it holds $H^s_g \hookrightarrow H^s$, we conclude  
    \begin{align*}
        \|\nabla \times (\div (u\otimes v))\|_{H^{-2}_g} \lesssim \|\omega\|_{H^{-1/4}_g} \|\xi\|_{H^{-1/4}_g}.
    \end{align*}
\end{proof}
\begin{remark}
    The computation of the previous lemma is almost sharp if $g(k)\sim \log(1+|k|)$; however, it is suboptimal if $g$ contains powers of $|k|$. Assume $g(|k|)= |k|^{2\gamma}\log(1+|k|)$ for some $\gamma >1$, then we have, by repeated Sobolev embeddings, assuming for simplicity $\gamma \le3/2$,
    \begin{align*}
        \|u\otimes v\|_{H^0_{g^{-1}}} \le \|u\otimes v\|_{H^{-\gamma}}  \lesssim \|\omega\|_{H^{-1/4 - \gamma /2}}\|\xi\|_{H^{-1/4 - \gamma /2}}.
    \end{align*}
    Finally, a straightforward inspection of the definition of the $H^s_g$ norm yields
    \begin{equation*}
        \|\omega\|_{H^{-1/4 - \gamma /2}} \lesssim \|\omega\|_{H_g^{-\frac{1}{4}(\frac{6\gamma +1}{\gamma +1})}}
    \end{equation*}
    and similarly for $\xi$. Note that for $\gamma = 3/2$, the right hand side in the last inequality becomes $\|\omega\|_{H_g^{-1}}=\|\omega\|_{H^{-1}}$. We will not stress this loss of sharpness in the sequel, since our aim is mainly to treat the case $\gamma =1$. 
\end{remark}
\begin{lemma}\label{lem:nonlinear_est_cutoff}
    For all $g$ verifying \autoref{hp_correction}, every $r \in (-1/2, 0)$  and every $\omega, \xi \in H^0_g$, denoting $u=K[\omega]$, $v=K[\xi]$, there exists $\delta_0 \in (r,0) $ and $k(r, \delta_0) \in (0,1)$ such that for every $\delta >\delta_0$
    \begin{equation}\label{ineq:nonlinear_cutoff_interp}
        \|\LL_u \omega \|_{H^{-2}_g} \lesssim \|\omega\|^{1-k}_{H^\delta_g}\|\omega \|^{1+k}_{H^r_g}.
    \end{equation}
    As a consequence, denoting $F_{R,r}(\omega):= \phi_R(\|\omega\|_{H_g^r})\LL_u \omega$, it holds
    \begin{equation}\label{ineq:lipschitz_cutoff}
        \|F_{R,r}( \omega) - F_{R,r}( \xi) \|_{H^{-2}_g} \lesssim (1 +\|\xi\|_{H^\delta_g})\|\omega - \xi\|_{H^r_g} + (\|\omega\|_{H^{-1/4}_g}+\|\xi\|_{H^{-1/4}_g})\|\omega - \xi\|_{H^{-1/4}_g}. 
    \end{equation}
\end{lemma}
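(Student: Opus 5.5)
The interpolation inequality \eqref{ineq:nonlinear_cutoff_interp} will come out of \autoref{lem:nonlinear_est} together with interpolation in the $H^s_g$ scale. The Lipschitz bound \eqref{ineq:lipschitz_cutoff} then follows from a splitting of the cutoff nonlinearity.

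\textbf{Step 1: the interpolation inequality.} By \autoref{lem:nonlinear_est} with $\xi=\omega$ we have $\|\LL_u\omega\|_{H^{-2}_g}\lesssim\|\omega\|_{H^{-1/4}_g}^2$. Since the $H^s_g$ norms are weighted $\ell^2$ norms on Fourier coefficients, Hölder's inequality gives the log-convexity
$$\|\omega\|_{H^{s}_g}\le \|\omega\|_{H^{r}_g}^{1-\lambda}\|\omega\|_{H^{\delta}_g}^{\lambda}, \qquad s=(1-\lambda)r+\lambda\delta.$$
Here the weight $|m|^{2s}g^{s+1}$ is exactly log-linear in $s$. Alternatively one can use \autoref{lemma_interpolation} with complex interpolation.

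\emph{Choice of $\delta_0$.} If $r\ge -1/4$, then $\|\omega\|_{H^{-1/4}_g}\lesssim\|\omega\|_{H^r_g}$ and the bound is trivial (with $k$ close to $1$ and the $\|\omega\|_{H^\delta_g}$ factor absorbed, since $H^\delta_g\hookrightarrow H^r_g$). So assume $r<-1/4$. Fix any $\delta_0\in(-1/4,0)$; for $\delta>\delta_0$ the point $-1/4$ lies between $r$ and $\delta$, with $\lambda=\frac{-1/4-r}{\delta-r}$.

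\emph{The exponent $k$.} We obtain $\|\LL_u\omega\|_{H^{-2}_g}\lesssim \|\omega\|_{H^r_g}^{2(1-\lambda)}\|\omega\|_{H^\delta_g}^{2\lambda}$. We need $2\lambda<1$, i.e.\ $\delta-r>2(-1/4-r)$, i.e.\ $\delta>-1/2-r$. Since $r>-1/2$, the number $-1/2-r$ is negative. So we take
$$\delta_0=\max\{-1/2-r,\,-1/4\}\in(r,0).$$
Setting $1-k=2\lambda$ gives $k\in(0,1)$. The exponent $\lambda$ depends on $\delta$; to get $k=k(r,\delta_0)$ independent of $\delta$, we use the monotone embedding $H^\delta_g\hookrightarrow H^{\delta'}_g$ for some fixed $\delta'\in(\delta_0,\delta]$ (say $\delta'$ slightly above $\delta_0$). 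This is where the bookkeeping needs the most care, but it is routine.

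\textbf{Step 2: the Lipschitz bound.} Write
$$F_{R,r}(\omega)-F_{R,r}(\xi)=\phi_R(\|\omega\|_{H^r_g})\bigl(\LL_u\omega-\LL_v\xi\bigr)+\bigl(\phi_R(\|\omega\|_{H^r_g})-\phi_R(\|\xi\|_{H^r_g})\bigr)\LL_v\xi .$$
\emph{First term.} It is bounded via the second part of \autoref{lem:nonlinear_est}, using $\phi_R\le1$. This yields the $\|\cdot\|_{H^{-1/4}_g}$ contribution.

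\emph{Second term.} By the Lipschitz property of $\phi_R$ and the triangle inequality, the difference of cutoffs is $\lesssim\|\omega-\xi\|_{H^r_g}$. The difference also vanishes unless $\min(\|\omega\|_{H^r_g},\|\xi\|_{H^r_g})\le R+1$.
\begin{itemize}
\item If $\|\xi\|_{H^r_g}\le R+1$, Step 1 gives $\|\LL_v\xi\|_{H^{-2}_g}\lesssim_R\|\xi\|_{H^\delta_g}^{1-k}\le 1+\|\xi\|_{H^\delta_g}$.
\item If instead $\|\xi\|_{H^r_g}>R+1\ge\|\omega\|_{H^r_g}$, swap the roles of $\omega$ and $\xi$ in the decomposition. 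The cutoff-difference term then carries $\LL_u\omega$ with $\|\omega\|_{H^r_g}\le R+1$, while the other term is again handled by \autoref{lem:nonlinear_est}.
\end{itemize}

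\emph{Main obstacle.} The swap in the second case produces $1+\|\omega\|_{H^\delta_g}$ instead of $1+\|\xi\|_{H^\delta_g}$. I would treat this by bounding with the symmetric quantity; if the strict form is needed, I would write $\|\omega\|_{H^\delta_g}\le\|\xi\|_{H^\delta_g}+\|\omega-\xi\|_{H^\delta_g}$ and absorb the difference using a further interpolation into the $H^{-1/4}_g$ term. This also requires $\delta\le -1/4$-compatible ordering, so this case is the delicate point of the argument. All other steps are direct consequences of \autoref{lem:nonlinear_est}.
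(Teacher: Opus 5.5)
\textbf{Overall.} Your approach is the paper's. The one case you flag is a real gap for the inequality as literally stated, but the paper's proof has the same gap, and your proposed fix for it does not work. Details below.

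\textbf{Step 1 and the choice of $k$.} Your Step 1 matches the paper: you interpolate $\|\omega\|^2_{H^{-1/4}_g}$ from \autoref{lem:nonlinear_est} between $H^r_g$ and $H^\delta_g$. Your $\lambda$ equals the paper's $\theta=\frac{1+4r}{4(r-\delta)}$, and your threshold $\delta>-\frac12-r$ makes explicit what the paper leaves implicit. Your Step 2 is the paper's splitting of the cutoff difference, combined with the Lipschitz bound on $\phi_R$ and Step 1 applied to whichever of $\omega,\xi$ lies in the $H^r_g$-ball of radius $R+1$.

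There is one bookkeeping slip. A fixed $\delta'>\delta_0$ cannot lie in $(\delta_0,\delta]$ for every $\delta>\delta_0$. The fix is to declare $\delta'$ to be the lemma's $\delta_0$: any $\delta_0>-\frac12-r$ works, with $k=k(r,\delta_0)$. Monotonicity of the $H^s_g$ norms then covers every $\delta\ge\delta_0$.

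\textbf{The flagged case and the paper.} The case $\|\omega\|_{H^r_g}\le R+1<\|\xi\|_{H^r_g}$ is a genuine gap for the inequality as written. The paper does not resolve it either. Its step ``otherwise exchange the roles of $\omega$ and $\xi$'', together with its decomposition (which puts $\LL_u\omega$, not $\LL_v\xi$, next to the cutoff difference), only proves the symmetric bound with factor $1+\|\omega\|_{H^\delta_g}+\|\xi\|_{H^\delta_g}$.

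That symmetric bound is all that is used downstream, since the Lipschitz condition in \autoref{SMR_nonlinear_hp} is symmetric. So your fallback is exactly what the paper proves. Your decomposition is in fact better suited to the asymmetric form: it already gives $1+\|\xi\|_{H^\delta_g}$ in the other two cases.

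\textbf{Why your proposed fix fails.} You suggest absorbing $\|\omega-\xi\|_{H^r_g}\|\omega-\xi\|_{H^\delta_g}$ into the $H^{-1/4}_g$ term. As you suspected, this cannot work: $\delta>-\frac14$ makes $H^\delta_g$ the stronger norm.

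\textbf{A fix that works.} For $r\ge-\frac14$ the bad case is trivial, since $\|\LL_u\omega\|_{H^{-2}_g}\lesssim\|\omega\|^2_{H^r_g}\le(R+1)^2$ and $\phi_R(\|\omega\|_{H^r_g})\lesssim\|\omega-\xi\|_{H^r_g}$.

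For $r<-\frac14$, note that $F_{R,r}(\xi)=0$. Set $d_r=\|\omega-\xi\|_{H^r_g}$ and $d=\|\omega-\xi\|_{H^{-1/4}_g}$, so that $d\gtrsim d_r$. Then $\phi_R(\|\omega\|_{H^r_g})\lesssim\min\{1,d_r\}$, and
\[
\|F_{R,r}(\omega)-F_{R,r}(\xi)\|_{H^{-2}_g}\lesssim\min\{1,d_r\}\|\omega\|^2_{H^{-1/4}_g}\lesssim\min\{1,d_r\}\|\xi\|^2_{H^{-1/4}_g}+d^2 .
\]
By the triangle inequality, $d^2\le(\|\omega\|_{H^{-1/4}_g}+\|\xi\|_{H^{-1/4}_g})\,d$, which is the $H^{-1/4}_g$ term.

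For the remaining term, apply Step 1 to $\xi$ and use $\|\xi\|_{H^r_g}\le R+1+d_r$.
\begin{itemize}
\item If $d_r\le1$, the term is $\lesssim_R d_r\|\xi\|^{1-k}_{H^\delta_g}\le d_r\,(1+\|\xi\|_{H^\delta_g})$.
\item If $d_r>1$, the term is $\lesssim_R(\|\xi\|_{H^\delta_g}d_r)^{1-k}(d_r^2)^k\le\|\xi\|_{H^\delta_g}d_r+d_r^2$ by Young's inequality. Since $d_r^2\lesssim d^2$, this last piece is again absorbed into the $H^{-1/4}_g$ term.
\end{itemize}
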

\begin{proof}
    Again, the second statement is implied by the first one and by \autoref{lem:nonlinear_est}. Indeed, it holds
    \begin{align*}
        F_{R,r}( \omega) - F_{R,r}( \omega') = \left(\phi_R(\|\omega\|_{H_g^r}) - \phi_R(\|\xi\|_{H_g^r})\right)\LL_u \omega + \phi_R(\|\xi\|_{H_g^r})\left(\LL_u\omega - \LL_{v}\xi\right).
    \end{align*}
    Thus, we only need to control the first term. Without loss of generality, we can assume that $\|\omega\|_{H^r_g} \le R+1$. Indeed, if this does not hold, either $\|\xi\|_{H^r_g} \le R+1$ and then we can exchange the role of $\xi$ and $\omega$, or the term under study is zero, thank to the properties of $\phi_R$.
     Thanks to the smoothness of $\phi_R$, we have 
    $$\|\left(\phi_R(\|\omega\|_{H_g^r}) - \phi_R(\|\xi\|_{H_g^r})\right)\LL_u \omega \|_{H^{-2}_g} \le C\|\omega - \xi\|_{H_g^r}\|\LL_u \omega \|_{H_g^{-2}}.$$
    Thus, \eqref{ineq:lipschitz_cutoff} follows from \eqref{ineq:nonlinear_cutoff_interp} and the Young inequality.
    To prove \eqref{ineq:nonlinear_cutoff_interp} we just perform an interpolation. By \autoref{lem:nonlinear_est}
    \begin{equation*}
        \|\LL_u \omega \|_{H^{-2}_g} \lesssim \|\omega\|^2_{H^{-1/4}_g}. 
    \end{equation*}
    Thus, if $r> -1/4$, there is nothing to prove. If $r\le -1/4$, we assume $\delta > -1/4$. Then interpolation gives
    \begin{align*}
        \|\omega\|^2_{H^{-1/4}_g} \lesssim  \|\omega\|^{2(1-\theta)}_{H^{r}_g} \|\omega\|^{2\theta}_{H^{\delta}_g}, \qquad \theta = \frac{1+4r}{4(r-\delta)}.
    \end{align*}
    Finally, since $r \ge -1/2$, it is possible to choose $-1/4 < \delta < 0$ so that $2\theta = 1-k$ and $2(1-\theta)= 1+k$ for some $k\in (0,1). $
\end{proof}

\subsubsection{Proof of \autoref{prop:local_well_nonlinear}}
Under the \autoref{hyp_noise}, the proof is a direct application of \autoref{SMR_maximal_thm} with linear operator given by $A + \nu \Delta$: thanks to \autoref{lem:nonlinear_est} and recalling that $H^{-1/4}_g = [H^{-2}_g, H^0_g]_\theta$ for $\theta = 7/8$, \autoref{SMR_nonlinear_hp} is verified for every $p\ge 4$, while the $\mathcal{SMR}_p^\bullet$ property is provided by \autoref{SMR_linear_prop}.
Hence, \autoref{prop:local_well_nonlinear} follows by using the abstract results in \cite[Subsection 4.2 and 5.3]{agresti_nonlinear_2025} (see also \cite[Subsection 8.3]{agresti_nonlinear_2025} or \cite[Subsection 4.2]{MR4703457} for similar situations). To avoid repetitions, we omit the details of the proof of \eqref{eq:instantaneous_regularization_omega}.

\subsubsection{Proof of \autoref{prop:global_well_cutoff}}
The local well-posedness follows as in the case without cut-off thanks to \autoref{lem:nonlinear_est_cutoff}. To show that the unique solution is global, we employ the blow-up criterion \autoref{SMR_blowup+instreg}: given $(\omega, \eta)$ the maximal solution, then for all $t< \infty$
$$\PP\Big(\eta <t, \ \sup_{r \in [0, \eta)}\|\omega_r\|_{B^{-\frac{2}{p}}_{g,2,p}}  + \int_0^\eta \|\omega_r\|_{H^0_g}^p \mathrm{d}r<+\infty\Big)=0$$
    Thanks to \autoref{SMR_linear_prop}, by seeing the nonlinearity as forcing terms, for every $t>0$ and every stopping time $\tau < \eta\land t$ we have 
    \begin{align*}
        \expt{\sup_{r \in [0, \tau)}\|\omega_r\|^p_{B^{-\frac{2}{p}}_{g,2,p}}}  &+ \expt{\int_0^\tau \|\omega_r\|_{H^0_g}^p \mathrm{d}r} \\ &\lesssim_\nu \expt{\|\omega_0\|^p_{B^{-\frac{2}{p}}_{g,2,p}}} + \expt{\int_0^\tau \|\phi_R(\|\omega_r\|_{H^r_g}) \LL_{u_r}\omega_r\|^p_{H^{-2}_g} \mathrm{d}r} \\
        &\lesssim_\nu  \expt{\|\omega_0\|^p_{B^{-\frac{2}{p}}_{g,2,p}}} + (R+1)^{p(1+k)} \expt{\int_0^\tau \|\omega_r\|^{p(1-k)}_{H^0_g}\mathrm{d}r } \\
        &\lesssim_\nu\Big(1+ \expt{\|\omega_0\|^p_{B^{-\frac{2}{p}}_{g,2,p}}} \Big) + \frac{1}{2} \expt{\int_0^\tau \|\omega_r\|^{p}_{H^0_g} \mathrm{d}r}. 
    \end{align*}
    We employed \autoref{lem:nonlinear_est_cutoff} with $\delta =0$ and $1+k = -(2r)^{-1} \in (1,2)$ (which gives $k\in (0,1)$) in the third line. Hence, absorbing the term $\frac{1}{2} \expt{\int_0^\tau \|\omega_r\|^{p}_{H^0_g} \mathrm{d}r}$ on the left-hand side of the above estimate, we get
      \begin{align*}
        \expt{\sup_{r \in [0, \tau)}\|\omega_r\|^p_{B^{-\frac{2}{p}}_{g,2,p}}}  &+ \expt{\int_0^\tau \|\omega_r\|_{H^0_g}^p \mathrm{d}r}  \lesssim_\nu  \Big(1+ \expt{\|\omega_0\|^p_{B^{-\frac{2}{p}}_{g,2,p}}} \Big).
        \end{align*}
  Taking a sequence of stopping times $\tau_n \uparrow \eta \land t$ yields by Fatou's lemma and the blow-up criterion, $\PP(\eta < t)=0$, from which $\eta = +\infty$ almost surely. \\
    
We end this subsection by providing further properties of the global solutions of \eqref{main_ito_cutoff} provided by \autoref{prop:global_well_cutoff}. This is the content of \autoref{lem: H-1 estimate_local_nonlinear} below. Before presenting it, we start with a preliminary computation.
\begin{lemma}\label{lem:noise_estimate}
    Let $\omega \in H_g^0$, and $\theta(t)$ satisfying \autoref{hyp_noise}, then for each $t\geq 0$ it holds 
    \begin{align*}
        \sum_{k\in I} \|\LL_{\sigma_k(t)} \omega\|_{H^{-1}}^2 \le 3 \|\omega \|_{L^2}^2.
    \end{align*}
\end{lemma}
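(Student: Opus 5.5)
The plan is to estimate each term $\LL_{\sigma_k}\omega$ in $H^{-1}$ by duality, using that $\sigma_k$ is divergence-free. We write
\[
\LL_{\sigma_k}\omega=\sigma_k\cdot\nabla\omega-\omega\cdot\nabla\sigma_k=\div(\sigma_k\otimes\omega-\omega\otimes\sigma_k),
\]
which uses $\div\sigma_k=0$ and $\div\omega=0$. Equivalently, $\LL_{\sigma_k}\omega=-\nabla\times(\sigma_k\times\omega)$. Since $\nabla\times$ (respectively $\div$) maps $L^2$ boundedly into $H^{-1}$ with norm at most one on zero-mean functions (the multiplier $|m|^{-1}$ times a unit-modulus component), this gives
\[
\|\LL_{\sigma_k}\omega\|_{H^{-1}}\le\|\sigma_k\times\omega\|_{L^2}.
\]
Summing over $k=(m,j)\in I$ then yields
\[
\sum_{k}\|\sigma_k\times\omega\|_{L^2}^2=\int_{\T^3}\sum_{m,j}|\theta_m|^2\,|a_{m,j}\times\omega(x)|^2\,\mathrm{d}x,
\]
because $|e^{im\cdot x}|=1$, so no Fourier interaction between the modes of $\sigma_k$ and $\omega$ remains.

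Next we use the pointwise identity $|a\times w|^2=|w|^2-(a\cdot w)^2$ for unit vectors $a$. Summing over $j=1,2$ and using that $\{m/|m|,a_{m,1},a_{m,2}\}$ is orthonormal gives
\[
\sum_{j}|a_{m,j}\times w|^2=2|w|^2-|w|^2+\frac{(m\cdot w)^2}{|m|^2}=|w|^2+\frac{(m\cdot w)^2}{|m|^2}\le 2|w|^2.
\]
Therefore
\[
\sum_k\|\LL_{\sigma_k}\omega\|_{H^{-1}}^2\le 2\|\theta\|_{\ell^2}^2\|\omega\|_{L^2}^2.
\]
Alternatively, the isotropy of $\theta_m$, which depends only on $|m|$, lets us average $(m\cdot w)^2/|m|^2$ to $|w|^2/3$ and sharpen this to $\tfrac43\|\theta\|_{\ell^2}^2$.

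It then remains to compute $\|\theta\|_{\ell^2}^2$ from \eqref{def_coefficients}. By the definition of $Z_n$,
\[
\sum_m\theta_m^2=Z_n^{-2}\sum_{n\le|m|\le2n}|m|^{-3}=\tfrac32.
\]
This gives the bound $2\cdot\tfrac32=3$, which is exactly the constant in the statement, so the crude bound $\le 2|w|^2$ suffices.

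The main care point is the first step: checking the normalization conventions. These are the choice of $H^{-1}$ norm with multiplier $|m|^{-1}$ on $2\pi\Z^3$, and the fact that the complex coefficients, with $\overline{\sigma_k}=\sigma_{-k}$, contribute $|\sigma_k|^2=\theta_m^2$ with no extra factor. Getting either convention wrong would spoil the exact constant $3$. A further subtlety is that the curl of an $L^2$ field may have a nonzero projection issue, but the curl is automatically mean-zero and divergence-free, so the $H^{-1}$ bound holds cleanly.
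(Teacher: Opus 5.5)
Your proof is correct and follows essentially the same route as the paper. Both arguments write $\LL_{\sigma_k}\omega$ as (minus) the curl of $\sigma_k\times\omega$, bound its $H^{-1}$ norm by $\|\sigma_k\times\omega\|_{L^2}$, and then sum using $\sum_{k\in I}|\theta_k|^2=2\cdot\tfrac32=3$. The only difference is cosmetic: the paper uses the crude bound $\|\sigma_k\times\omega\|_{L^2}\le\|\sigma_k\|_{L^\infty}\|\omega\|_{L^2}$, while your pointwise identity $\sum_j|a_{m,j}\times w|^2=|w|^2+(m\cdot w)^2/|m|^2\le 2|w|^2$ gives the same constant. Your isotropic averaging remark, which sharpens the bound to $2\|\omega\|_{L^2}^2$, is also correct but not needed.
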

\begin{proof}
    Recall the following equivalence of norms: for every $X\in L^2$, it holds 
    $$\|(-\Delta)^{-1/2}\nabla \times X\|_{L^2}= \|\Pi QX\|_{L^2}$$
   where $Q$ is the orthogonal projector onto mean-zero functions. Then, we have 
    \begin{align*}
        \|\LL_{\sigma_k(t)} \omega \|^2_{ H^{-1}}&= \|(-\Delta)^{-1/2} \nabla \times (\sigma_k(t)\times \omega)\|_{L^2}^2 \\ &= \|(-\Delta)^{-1/2} \nabla \times (\Pi Q[\sigma_k(t)\times \omega])\|_{L^2}^2 \\ &= \|\Pi Q[\sigma_k(t)\times \omega]\|_{L^2}^2 \\ &\le \|\sigma_k(t)\times \omega\|_{L^2}^2,
        \end{align*}
    and finally,
    \begin{align*}
\sum_{k\in I}\|\sigma_k(t) \times \omega\|_{L^2}^2 &\le \sum_{k\in I}\|\sigma_k(t)\|_{L^\infty}^2\|\omega\|_{L^2}^2 \\ &= \Big(\sum_{k\in I}|\theta_k(t)|^2\Big)\|\omega\|_{L^2}^2 \\
&= 2\Big( \sum_{m\in 2\pi\Z^3_0}|\theta_m(t)|^2\Big)  \|\omega\|^2_{L^2}=3 \|\omega\|^2_{L^2}. \qquad \qedhere
\end{align*}
\end{proof}
\begin{lemma}\label{lem: H-1 estimate_local_nonlinear}
    Let $\omega$ be the global solution provided by \autoref{prop:global_well_cutoff} and $T>0$. For every $p\ge1$ there exists a constant $K(T, \nu, p)$ such that it holds
    \begin{align}
        \expt{\sup_{t\in [0, T]} \|\omega_t\|^{2p}_{H^{-1}} } +\expt[]{\int_0^T \|\omega_s\|_{ H^{-1}}^{2p-2}\|\omega_s\|_{H^{0}_g}^{2}\mathrm{d}s} \le K\expt[]{\|\omega_0\|_{ H^{-1}}^{2p}}
    \end{align}
\end{lemma}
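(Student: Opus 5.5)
We apply Itô's formula to the functional $\Phi(\omega)=\|\omega\|_{H^{-1}}^{2p}$ along the global solution of \eqref{main_ito_cutoff} given by \autoref{prop:global_well_cutoff}. A priori the solution only lives in $H^0_g$ in an $L^p_t$ sense, so we localize. We introduce stopping times $\tau_N=\inf\{t:\|\omega_t\|_{H^{-1}}\ge N\}\wedge T$ and, if needed, work first with Galerkin or regularized approximations, where Itô's formula for the Hilbert norm $\|\cdot\|^2_{H^{-1}}$ is justified via the Gelfand triple $H^0_g\subset H^{-1}\subset H^{-2}_g$ (\cite{agresti_nonlinear_2025}).

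\textbf{Step 1: the scalar Itô expansion.} For $Y_t=\|\omega_t\|^2_{H^{-1}}$ we get
\[
\mathrm{d}Y=2\langle \omega,(A+\nu\Delta)\omega-\phi_R\LL_u\omega\rangle_{H^{-1}}\mathrm{d}t+2\nu\sum_k\|\LL_{\sigma_k}\omega\|^2_{H^{-1}}\mathrm{d}t+\mathrm{d}M,
\]
with $\mathrm{d}M=-2\sqrt\nu\sum_k\langle\omega,\LL_{\sigma_k}\omega\rangle_{H^{-1}}\mathrm{d}W^k$. The factor $2$ in front of $\nu\sum_k\|\LL_{\sigma_k}\omega\|^2_{H^{-1}}$ comes from $[W^k,\overline{W^k}]=2t$.

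The nonlinear term vanishes. Indeed, $\langle\omega,\LL_u\omega\rangle_{H^{-1}}=\langle u,\Pi(\omega\times u)\rangle=0$ up to sign, since $\LL_u\omega=-\nabla\times(u\times\omega)$ and $(-\Delta)^{-1}\omega$ is the velocity potential; this is the usual energy cancellation. The cutoff factor $\phi_R$ does not affect this.

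For the linear terms:
\begin{itemize}
\item $\langle\omega,\nu\Delta\omega\rangle_{H^{-1}}=-\nu\|\omega\|^2_{L^2}$;
\item $\langle\omega,A\omega\rangle_{H^{-1}}\le -c\|\omega\|^2_{H^0_g}$, by \autoref{generation_semigroup} after shifting by one derivative;
\item by \autoref{lem:noise_estimate}, the Itô correction is at most $6\nu\|\omega\|^2_{L^2}$.
\end{itemize}
This correction is not absorbed by $-2\nu\|\omega\|_{L^2}^2$. Instead we write $\|\omega\|_{L^2}^2\le \eta\|\omega\|_{H^0_g}^2+C_\eta\|\omega\|_{H^{-1}}^2$. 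This holds because $g\to\infty$, so high modes are controlled by $g$ and low modes by the $H^{-1}$ norm. The result is
\[
\mathrm{d}Y\le\big(-c\|\omega\|_{H^0_g}^2+C(\nu)Y\big)\mathrm{d}t+\mathrm{d}M.
\]

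\textbf{Step 2: the power $p$.} Applying Itô to $Y^p$ gives
\[
\mathrm{d}Y^p=pY^{p-1}\mathrm{d}Y+\tfrac{p(p-1)}2Y^{p-2}\mathrm{d}[M].
\]
The quadratic variation satisfies $\mathrm{d}[M]\lesssim\nu\sum_k|\langle\omega,\LL_{\sigma_k}\omega\rangle_{H^{-1}}|^2\mathrm{d}t$. By Cauchy–Schwarz and \autoref{lem:noise_estimate} this is at most $C\nu Y\|\omega\|_{L^2}^2\,\mathrm{d}t$, so $Y^{p-2}\mathrm{d}[M]\lesssim Y^{p-1}\|\omega\|^2_{L^2}\mathrm{d}t$. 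The same interpolation as in Step 1 then bounds it by
\[
\tfrac c4 Y^{p-1}\|\omega\|^2_{H^0_g}+CY^p.
\]
Taking expectations up to $\tau_N$ and applying Gronwall yields the bound on $\mathbb{E}\int_0^{T} Y^{p-1}\|\omega\|_{H^0_g}^2\,\mathrm{d}s$ and on $\sup_t\mathbb{E}[Y^p]$.

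\textbf{Step 3: supremum inside the expectation.} We use Burkholder–Davis–Gundy:
\[
\mathbb{E}\sup_t\Big|\int Y^{p-1}\mathrm{d}M\Big|\lesssim \mathbb{E}\Big(\int Y^{2p-1}\|\omega\|_{L^2}^2\,\mathrm{d}s\Big)^{1/2}\le \tfrac12\mathbb{E}\sup_t Y^p+C\,\mathbb{E}\int Y^{p-1}\|\omega\|_{L^2}^2\,\mathrm{d}s.
\]
The last integral is already controlled by Step 2. Letting $N\to\infty$ by Fatou's lemma concludes the proof, with $K$ depending on $T,\nu,p$ and, through \autoref{lem:noise_estimate}, not on $n$.

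The main obstacle is the rigorous justification of Itô's formula and of the cancellation of the nonlinearity at the available regularity ($\omega\in L^p_tH^0_g$, $\LL_u\omega\in L^p_tH^{-2}_g$). In particular, the pairing $\langle\omega,\LL_u\omega\rangle_{H^{-1}}$ needs $H^0_g\times H^{-2}_g$ duality in the $H^{-1}$ structure, which holds because $H^{-1}$ is the midpoint space $X_{1/2}$. We would invoke the Itô formula for the triple $(X_1,X_{1/2},X_0)$ and verify the cancellation first for smooth fields, then extend by density using \autoref{lem:nonlinear_est}.
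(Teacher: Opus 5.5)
Your proposal is correct and follows essentially the same route as the paper. The common steps are: It\^o's formula for $\|\omega\|_{H^{-1}}^2$ with the cancellation of the nonlinearity, the bound $\sum_k\|\LL_{\sigma_k}\omega\|_{H^{-1}}^2\le 3\|\omega\|_{L^2}^2$ together with the interpolation $\|\omega\|_{L^2}^2\le \eps\|\omega\|_{H^0_g}^2+C_\eps\|\omega\|_{H^{-1}}^2$, Gr\"onwall, Burkholder--Davis--Gundy for the supremum, and It\^o's formula for the $p$-th power with the quadratic-variation term handled the same way. The only difference is technical: you justify It\^o's formula through the Gelfand triple $H^0_g\subset H^{-1}\subset H^{-2}_g$ with stopping-time localization, whereas the paper passes through a Galerkin approximation.
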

\begin{proof}(sketch.)
    Applying the finite-dimensional It\^o formula on an appropriate Galerkin approximation of $\omega_t$ and then passing to the limit, yields
    \begin{align*}
    \mathrm{d}\|\omega\|_{ H^{-1}}^2 + (\|\omega\|_{ H^{0}_g}^2 + \nu\|\omega\|_{L^2}^2)\mathrm{d}t= \sqrt{\nu}\mathrm{d}M_t + \nu\sum_{k\in I} \|\LL_{\sigma_k} \omega\|_{H^{-1}}^2\mathrm{d}t,
\end{align*}
 where $\mathrm{d}M_t = \sum_{k\in I} \brak{(-\Delta^{-1})\nabla \times (\sigma_k \times \omega_t), \omega_t}\mathrm{d}W_t^k$.
 An easy interpolation argument (similar to the one in the proof of \autoref{SMR_linear_prop}) gives, for every $\eps>0$, a constant $C_\eps$ such that 
\begin{equation}\label{L2_interpolation}
    \|\omega_s\|_{L^2}^2 \le \eps \|\omega_s\|_{H_g^0}^2 + C_\eps \|\omega_s\|^2_{H^{-1}}.
    \end{equation}
 This, together with \autoref{lem:noise_estimate} gives 
 \begin{align}\label{ito_H-1}
    \mathrm{d}\|\omega\|_{ H^{-1}}^2 + \nu\|\omega\|_{L^2}^2\mathrm{d}t+ \frac{1}{2}\|\omega\|_{H^{0}_g}^2\mathrm{d}t \le  \sqrt{\nu}\mathrm{d}M_t + C_\nu\| \omega\|_{H^{-1}}^2\mathrm{d}t.
\end{align}

We deduce 
$$\sup_{s \in [0, T]}\expt[]{\|\omega_s\|_{ H^{-1}}^2} + \expt[]{\int_0^T \|\omega_s\|_{H^{0}_g}^2\mathrm{d}s} \le K_\nu\expt[]{\|\omega_0\|_{H^{-1}}^2}.$$
Next, using Burkholder-Davis-Gundy and H\"older inequalities we get 
\begin{align*}
\expt[]{\sup_{s\in [0,T]}|M_s|}&\lesssim  \expt[]{\Big(\sum_{k\in I}\int_0^T\|(-\Delta)^{-1/2}\LL_{\sigma_k(s)}\omega_s\|^2_{L^2}\|\omega_s\|^2_{ H^{-1}}\mathrm{d}s\Big)^{1/2}} \\ &\lesssim  \expt[]{\sup_{s\in [0, T]} \|\omega_s\|_{ H^{-1}} \Big(\int_0^T \|\omega_s\|^2_{L^2}\mathrm{d}s\Big)^{1/2}} \\
&\le \frac{1}{2\sqrt \nu}\expt[]{\sup_{s\in [0, T]} \|\omega_t\|_{H^{-1}}^2 } + C_\nu\expt[]{\int_0^T \|\omega_s\|^2_{L^2}\mathrm{d}s}\\
&\le \frac{1}{2\sqrt{\nu}}\expt[]{\sup_{s\in [0, T]} \|\omega_t\|_{H^{-1}}^2 } + \tilde C_\nu\expt[]{\int_0^T \|\omega_s\|^2_{H^{-1}}\mathrm{d}s} + \eps \expt[]{\int_0^T \|\omega_s\|^2_{H^0_g}\mathrm{d}s},
\end{align*}
where we used again \eqref{L2_interpolation} in the last step. 
Taking the supremum in time of the time integrated equation \eqref{ito_H-1} and using the last inequality for the martingale term, we get 
\begin{equation}\label{energy_estimate_H-1}
    \expt[]{\sup_{s\in [0, T]}\|\omega_s\|_{ H^{-1}}^2} + \expt[]{\int_0^T \|\omega_s\|_{ H^{0}_g}^2\mathrm{d}s} \le K_\nu\expt[]{\|\omega_0\|_{ H^{-1}}^2}.
\end{equation}
For the case $p>1$ we apply again It\^o formula to $\|\omega\|_{H^{-1}}^{2p}$ obtaining 
\begin{align*}
    \mathrm{d}\|\omega\|_{ H^{-1}}^{2p} &= 2p \|\omega\|_{ H^{-1}}^{2p-2}\left( -\nu \|\omega\|_{L^2}^2 - \|\omega\|_{ H^0_g}^2\right)\mathrm{d}t  \\&\quad + \nu2p(2p-2)\|\omega\|_{ H^{-1}}^{2p-4}\sum_{k\in I}\brak{\LL_{\sigma_k}\omega, \omega}_{ H^{-1}}^2\mathrm{d}t \\
    &\quad+ \nu2p\|\omega\|_{H^{-1}}^{2p-2}\sum_{k\in I}\|\LL_{\sigma_k}\omega\|_{ H^{-1}}^2\mathrm{d}t +\mathrm{d}M_t\\
    &\le 2p \|\omega\|_{ H^{-1}}^{2p-2}\left( -\nu \|\omega\|_{L^2}^2 - \|\omega\|_{ H^0_g}^2 + 3\nu \|\omega\|_{L^2}^2\right)\mathrm{d}t \\ & \quad + \nu 2p(2p-2)\|\omega\|_{H^{-1}}^{2p-4}\sum_{k\in I}\brak{\LL_{\sigma_k}\omega, \omega}_{ H^{-1}}^2\mathrm{d}t + \mathrm{d}M_t \\
    &\le 2p \|\omega\|_{ H^{-1}}^{2p-2}\left( - \|\omega\|_{H^0_g}^2 + 2\nu (3p-2)\|\omega\|_{L^2}^2\right)\mathrm{d}t + \mathrm{d}M_t
\end{align*}
from which the desired formula follows by the same steps as in the previous case.
\end{proof}

\section{Scaling Limit for a system with cutoff}\label{sec:scaling_limit}
We begin by setting
\begin{equation*}
    \B_p(M):= \Big\{\xi \in B^{-\frac{2}{p}}_{g,2,p} : \ \  \|\xi\|_{ B^{-\frac{2}{p}}_{g,2,p}}\le M \Big\}.
\end{equation*}
The goal of this section is to prove the following result:
\begin{theorem}\label{thm: main scaling}
    Let $\nu>0$, $R> 0$, $M\ge 1$ and $\theta^n$ time independent satisfying \autoref{hyp_noise}. Assume $p \in (4, \infty)$, $-1/2 < r<- 2/p$. Let $\omega^n$ be the solution of \eqref{main_ito_cutoff} associated to $\theta^n$ and some initial data $\omega_0\in \B_p(M)$. Let $\omega^{R,\det}$ be the solution of 
    \begin{equation}\label{main_det_cutoff}
\begin{cases}
    \partial_t\omega^{R,\det} +  \phi_{R}(\|\omega^{R,\det}\|_{H^r_g})\LL_u \omega^{R,\det} + A\omega^{R,\det} - \nu\Delta \omega^{R,\det}   =0 \\
    u= K[ \omega^{R,\det}],
    \end{cases}
\end{equation}
with the same initial datum $\omega_0$ (c.f. \autoref{rem:stoch2det}). Then for all $r_0 < -2/p$ and $\eps >0$ we have 
    \begin{equation}
        \lim_{n\rightarrow\infty}\sup_{\omega_0 \in \B_p(M)} \PP\Big( \sup_{t\in [0, T]}\|\omega^n_t - \omega^{R,\det}_t\|_{H^{r_0}_g} > \eps \Big) =0
    \end{equation}
\end{theorem}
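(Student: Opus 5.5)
We argue by contradiction combined with stochastic compactness. Since the supremum over $\omega_0\in\B_p(M)$ appears inside the limit, assume there exist $\eta>0$, a subsequence (not relabelled) and initial data $\omega_0^n\in\B_p(M)$ with
\[
\PP\Big(\sup_{t\in[0,T]}\|\omega^n_t-\omega^{n,R,\det}_t\|_{H^{r_0}_g}>\eps\Big)\ge\eta ,
\]
where $\omega^{n,R,\det}$ is the deterministic solution of \eqref{main_det_cutoff} started from $\omega_0^n$. Because $\B_p(M)$ is bounded in $B^{-2/p}_{g,2,p}$ and this space embeds compactly into $H^{r_0}_g$ for $r_0<-2/p$ (the weights differ by a power of $|m|$), we may extract a further subsequence with $\omega_0^n\to\omega_0$ in $H^{r_0}_g$. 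We also choose $r<r_1<-2/p$ and work in $H^{r_1}_g$, which is still subcritical and compactly contains the bounded sets of $B^{-2/p}_{g,2,p}$. In this space the deterministic cutoff problem is Lipschitz-stable with respect to the data, by \autoref{lem:nonlinear_est_cutoff} together with maximal regularity for $A-\nu\Delta$. Hence $\omega^{n,R,\det}\to\omega^{R,\det}$ in $C([0,T];H^{r_0}_g)$, where $\omega^{R,\det}$ starts from $\omega_0$, and it suffices to prove $\omega^n\to\omega^{R,\det}$ in probability in $C([0,T];H^{r_0}_g)$.

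\textbf{Tightness.} By \autoref{prop:global_well_cutoff}, whose constant depends only on $\|\theta^n\|_{\ell^2}^2=3/2$ and not on $n$, we have uniform bounds
\[
\mathbb{E}\sup_{t\le T}\|\omega^n_t\|^p_{B^{-2/p}_{g,2,p}}+\mathbb{E}\int_0^T\|\omega^n\|^p_{H^0_g}\,\mathrm{d}t\lesssim 1+M^p.
\]
By \autoref{lem:nonlinear_est_cutoff} the cutoff drift is then bounded in $L^p(\Omega;L^p(0,T;H^{-2}_g))$. The martingale part is handled through \autoref{lem:noise_estimate} (or its $H^{-2}_g$ analogue): the stochastic integral is bounded in $L^p(\Omega;W^{\alpha,p}(0,T;H^{-1}))$ for $\alpha<1/2$ by BDG. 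Writing $\omega^n$ as the sum of its drift and stochastic integrals, we get a uniform bound in $W^{\alpha,p}(0,T;H^{-2}_g)\cap L^\infty(0,T;B^{-2/p}_{g,2,p})$, up to the initial-data term, whose convergence is handled directly. Aubin--Lions--Simon with $\alpha p>1$ then gives tightness in $C([0,T];H^{r_1}_g)$.

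\textbf{Identification of the limit.} By Skorokhod/Jakubowski we pass to a new probability space on which $\tilde\omega^n\to\tilde\omega$ a.s. in $C([0,T];H^{r_1}_g)$. The limit must solve \eqref{main_det_cutoff}, and this is the heart of the matter. The Itô corrector $-\nu\Delta\omega$ is already separated out in \eqref{main_ito_cutoff}. We then show the martingale term vanishes: for a smooth divergence-free test field $\psi$,
\[
\mathbb{E}\Big|\sqrt\nu\sum_k\int_0^t\langle\LL_{\sigma^n_k}\omega^n,\psi\rangle\,\mathrm{d}W^k\Big|^2
=2\nu\,\mathbb{E}\int_0^t\sum_k|\langle\omega^n,\LL_{\sigma^n_k}^*\psi\rangle|^2\,\mathrm{d}s .
\]
Since $\LL^*_{\sigma_k}\psi$ involves $\sigma_k\cdot\nabla\psi$ and $\nabla\sigma_k\cdot\psi$, we test against $\omega^n\in H^0_g$, which carries the $L^p_tH^0_g$ bound. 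The product $\overline{\sigma_k}\,\partial\psi$ concentrates at frequencies $\sim n$, so each pairing is at most $\|\omega^n\|_{L^2}\,\|P_{\gtrsim n}(\ldots)\|$. The term with $\nabla\sigma_k$ gains a factor $|m|\theta_m\sim|m|^{-1/2}/Z_n$, and this is paid for by the weight $g(n)^{1/2}$ from the $H^0_g$ norm, giving decay like $g(n)^{-1/2}\to0$.

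This estimate is where the hyperdissipation enters and is the main obstacle. The key point is that $H^0_g$ controls $\sum_{|m|\sim n}|\hat\omega(m)|^2$ with a factor $g(n)^{-1}$, which absorbs the singular stretching contribution $\sum_k\|\nabla\sigma_k\|_\infty^2\cdot\|\text{high modes}\|^2$. If this turns out too crude, I would write $\sum_k|\langle\omega,\LL^*_{\sigma_k}\psi\rangle|^2$ in Fourier variables and use the explicit isotropy of $\theta^n$ in \autoref{hyp_noise}.

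The nonlinear term passes to the limit by the continuity of $F_{R,r}$ from \autoref{lem:nonlinear_est_cutoff} together with strong convergence in $H^{r_1}_g$ and the $L^p_tH^0_g$ bound. Uniqueness for \eqref{main_det_cutoff} (\autoref{rem:stoch2det}) then shows that the law of the limit is $\delta_{\omega^{R,\det}}$. Convergence in law to a deterministic limit implies convergence in probability, which contradicts the assumed $\eta$.
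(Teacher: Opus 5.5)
Your overall architecture is the same as the paper's: contradiction over a sequence of data in $\B_p(M)$, the $n$-uniform bounds of \autoref{prop:global_well_cutoff}, tightness, Skorokhod, identification of the limit, uniqueness, and convergence in law to a deterministic limit upgraded to convergence in probability. Your tightness argument (BDG plus the fractional-Sobolev bound for stochastic integrals, using only \autoref{lem:noise_estimate} and the $L^p_tH^0_g$ bound) is a valid and somewhat more direct substitute for the paper's H\"older estimate in $H^{-\rho}$, which goes through $H^{-1}$ moment bounds and a lattice-sum lemma.

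The genuine gap is in the step you single out as the heart of the matter, the vanishing of the martingale term. The mechanism you propose does not close. If you bound each pairing by the high modes of $\omega^n$ and then sum over $k$, the stretching part is of size
\[
\sum_k\|\nabla\sigma^n_k\|_{L^\infty}^2\,\|P_{\gtrsim n}\omega^n\|_{L^2}^2\approx Z_n^{-2}\Big(\sum_{n\le|m|\le 2n}|m|^{-1}\Big)\,g(n)^{-1}\|\omega^n\|_{H^0_g}^2\sim\frac{n^2}{g(n)}\,\|\omega^n\|_{H^0_g}^2 .
\]
This diverges, because \autoref{hp_correction} forces $g(r)/r^2\to0$. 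The per-mode factor $|m|\theta^n_m\sim n^{-1/2}$ is small, but there are $\sim n^3$ active modes, and the gain $g(n)^{-1}$ cannot compensate. So hyperdissipation is not what kills this term.

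The missing idea has two parts. Orthogonality turns the sum over modes into a supremum, and against a smooth test field stretching is no worse than transport. Since $\div\omega=\div\sigma_k=0$, one has $\LL_{\sigma_k}\omega=-\nabla\times(\sigma_k\times\omega)$; equivalently, integrate by parts in $\int\omega_j\,\partial_j\sigma^i_k\,\psi_i$ using $\div\omega=0$. Hence
\[
\langle\LL_{\sigma_k}\omega,\psi\rangle=-\langle\sigma_k,\ \omega\times(\nabla\times\psi)\rangle,
\qquad
\sum_{k\in I}|\langle\LL_{\sigma^n_k}\omega,\psi\rangle|^2\le\|\theta^n\|_{\ell^\infty}^2\,\|\omega\times(\nabla\times\psi)\|_{L^2}^2\lesssim n^{-3}\|\psi\|_{C^1}^2\|\omega\|_{L^2}^2,
\]
by Bessel's inequality for the orthonormal family $a_{m,j}e^{im\cdot x}$. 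The quadratic variation therefore vanishes using only the uniform $L^2_{t,x}$ bound; this is exactly the paper's argument. Even in your adjoint form, using Bessel instead of pairing-by-pairing bounds gives the weight $\sup_m|m|^2(\theta^n_m)^2\sim n^{-1}$, which also suffices.

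Hyperdissipation enters only upstream, in the $n$-uniform bounds of \autoref{prop:global_well_cutoff}, through \autoref{SMR_perturbation_estimate}. There $\sum_k\|\LL_{\sigma_k}\omega\|_{H^{-1}}^2\le 3\|\omega\|_{L^2}^2$ must be absorbed by $\|\omega\|_{H^0_g}$.

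A minor point: Lipschitz stability of the deterministic cutoff problem in $H^{r_1}_g$ does not follow directly from \autoref{lem:nonlinear_est_cutoff} and maximal regularity in $H^{-2}_g$. The data converge only in $H^{r_1}_g$, not in the trace space $B^{-2/p}_{g,2,p}$. The simplest fix is to rerun your compactness argument with $\theta=0$, which is what the paper's ``easy to show'' amounts to.
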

In order to prove the above theorem, we will need the following time estimates.
\begin{proposition}\label{lem: time estimates}
    Let $p>4$, and $-1/2< r < -2/p$ and consider 
    $$\M^n_t:= \sum_{k\in I }\int_0^t \LL_{\sigma^n_k}\omega^n_s \mathrm{d}W_s^k.$$
    Then for every $T>0, a>1$ there exists $r_1, K>0, \ s_1\in [0, 1/2)$ such that 
    \begin{align}
        \expt[]{\|\M^n\|^{2a}_{C^{s_1}(0,T; H_g^{-r_1})}} &\le K_1 \EE[\|\omega_0\|^{2a}_{H^{-1}}] \label{time estimate mart}, \\
         \expt[]{\|\omega^n\|^{2a}_{C^{s_1}(0,T; H_g^{-r_1})}} &\le K_1 \EE[\|\omega_0\|^{2a}_{H^{-1}}] \label{time estimate}.
    \end{align}
\end{proposition}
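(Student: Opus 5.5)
We prove the martingale estimate \eqref{time estimate mart} first, via the Kolmogorov continuity criterion (or the Garsia--Rodemich--Rumsey inequality) in a sufficiently negative space $H^{-r_1}_g$. Then we deduce \eqref{time estimate} from the equation itself, written in integral form.

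\textbf{Step 1: increments of the martingale.} For $0\le s<t\le T$, the Burkholder--Davis--Gundy inequality in the Hilbert space $H^{-1}$ gives
\begin{align*}
\EE\big[\|\M^n_t-\M^n_s\|^{2a}_{H^{-1}}\big]
\lesssim_a \EE\Big[\Big(\int_s^t \sum_{k\in I}\|\LL_{\sigma^n_k}\omega^n_\rho\|^2_{H^{-1}}\,\mathrm{d}\rho\Big)^{a}\Big]
\le 3^a\,\EE\Big[\Big(\int_s^t\|\omega^n_\rho\|^2_{L^2}\,\mathrm{d}\rho\Big)^{a}\Big].
\end{align*}
The second inequality is \autoref{lem:noise_estimate}, which is uniform in $n$. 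This is the key point: the stretching part of the noise is harmless in $H^{-1}$.

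To extract a power of $|t-s|$, we interpolate $L^2$ between $H^{-1}$ and $H^0_g$, or simply use $\|\omega\|_{L^2}\le \|\omega\|_{H^0_g}/\sqrt{g(1)}$. We then apply H\"older in time with the exponent $2a$ available from the moment bounds. \autoref{lem: H-1 estimate_local_nonlinear} controls $\EE\int_0^T\|\omega\|^{2a-2}_{H^{-1}}\|\omega\|^2_{H^0_g}$ by $\EE\|\omega_0\|^{2a}_{H^{-1}}$ uniformly in $n$, since $K$ depends only on $\|\theta\|_{\ell^2}$. This does not directly give $\EE(\int_s^t\|\omega\|^2_{L^2})^a\lesssim |t-s|^{\beta}$, so I would trade space regularity for time integrability: work in $H^{-r_1}$ with $r_1>1$.

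There we have the cruder bound $\sum_k\|\LL_{\sigma_k}\omega\|_{H^{-r_1}}^2\lesssim \|\theta\|^2_{\ell^2}\|\omega\|^2_{H^{-1}}$. Indeed, $\LL_{\sigma_k}\omega=\nabla\times(\sigma_k\times\omega)$, and multiplication by $\sigma_k$ maps $H^{-1}\to H^{-r_1+1}$ with norm controlled by $\|\theta_m\|$ times a polynomial in $|m|$. This is \emph{not} uniform in $n$, so this route fails. The better route is to keep $L^2$ and split
\[
\int_s^t\|\omega\|_{L^2}^2 \le \varepsilon\int_s^t\|\omega\|^2_{H^0_g}+C_\varepsilon|t-s|\sup\|\omega\|^2_{H^{-1}}
\]
using \eqref{L2_interpolation}. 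I expect the genuine fix is to use the higher-order bound \eqref{a_priori_estimate_cutoff}, namely $\omega\in L^p(0,T;H^0_g)$ in $L^p(\Omega)$ with constants uniform in $n$. With it, H\"older gives
\[
\Big(\int_s^t\|\omega\|_{L^2}^2\Big)^{a}\le |t-s|^{a(1-2/p)}\Big(\int_0^T\|\omega\|^p_{H^0_g}\Big)^{2a/p},
\]
at least for $a\le p/2$; larger $a$ follows by interpolating moments. This produces the increment bound $\EE\|\M^n_t-\M^n_s\|^{2a}_{H^{-1}}\lesssim |t-s|^{a(1-2/p)}$.

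The right-hand side of \eqref{a_priori_estimate_cutoff} is $1+\|\omega_0\|^p_{B^{-2/p}}$ rather than $\EE\|\omega_0\|^{2a}_{H^{-1}}$. For the stated form I would instead combine \autoref{lem: H-1 estimate_local_nonlinear} with the pathwise bound $\sup\|\omega\|_{H^{-1}}$. Matching the right-hand side exactly is the main obstacle; in the application, where $\omega_0\in\B_p(M)$, either form suffices. Kolmogorov's criterion with $a(1-2/p)>1$ then yields the $C^{s_1}$ bound for any $s_1<\tfrac12-\tfrac1p-\tfrac1{2a}$ in $H^{-1}\hookrightarrow H^{-r_1}_g$.

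\textbf{Step 2: the solution.} Write
\[
\omega^n_t-\omega^n_s=-\int_s^t\big(\phi_R\LL_u\omega-A\omega-\nu\Delta\omega\big)\,\mathrm{d}\rho-\sqrt\nu(\M^n_t-\M^n_s).
\]
In $H^{-2}_g$ we use $\|A\omega\|_{H^{-2}_g}\lesssim\|\omega\|_{H^0_g}$ and $\|\nu\Delta\omega\|\lesssim\|\omega\|_{H^0_g}$. The nonlinear term is handled by \autoref{lem:nonlinear_est_cutoff}: $\|\phi_R\LL_u\omega\|_{H^{-2}_g}\lesssim (R+1)^{1+k}\|\omega\|^{1-k}_{H^0_g}$. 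H\"older in time then gives a $|t-s|^{1/2}$ bound, with moments controlled by the $L^2(0,T;H^0_g)$ estimate of \autoref{lem: H-1 estimate_local_nonlinear} (raised to power $a$). Hence the drift lies in $C^{1/2}(0,T;H^{-2}_g)$. Taking $r_1=2$, so that $H^{-1}\hookrightarrow H^{-2}_g$, and combining with Step 1 gives \eqref{time estimate}.
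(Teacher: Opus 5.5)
\paragraph{Gap in Step 1.} Your Step 1 has a genuine gap, and it comes from discarding the idea the paper actually uses. You reject working in a more negative space because the mode-by-mode bound $\|\LL_{\sigma^n_k}\omega\|_{H^{-r_1}}\lesssim|\theta_m|\,|m|\,\|\omega\|_{H^{-1}}$ loses a factor of order $n^2$ after summing over $k$. That loss is an artifact of taking operator norms one mode at a time.

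The paper works in $H^{-\rho}_g$ with $\rho>2$. It sums over the noise modes \emph{before} taking the supremum over the frequency $l$ of $\omega=\sum_l\omega^l e^{il\cdot x}$:
\begin{align*}
\sum_{k\in I}\|\LL_{\sigma^n_k}\omega\|^2_{H^{-\rho}}\lesssim\sum_{k\in I}\|\Pi Q(\sigma^n_k\times\omega)\|^2_{H^{1-\rho}}\lesssim Z_n^{-2}\sum_{l}\frac{|\omega^{l}|^2}{|l|^2}\sum_{\substack{n\le|m|\le2n\\ m\neq -l}}|m|^{-3}|l|^2|l+m|^{2-2\rho}.
\end{align*}
The inner sum is bounded uniformly in $l$ and $n$; this is \cite[Lemma 4.5]{butori_meanfield_2025}. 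Where $|l+m|\ge|l|/2$, one has $|l|^2|l+m|^{2-2\rho}\lesssim1$ and $\sum_{n\le|m|\le2n}|m|^{-3}\lesssim1$. The only dangerous region, $|l+m|<|l|/2$, forces $|m|>|l|/2$. There $|m|^{-3}|l|^2\lesssim|m|^{-1}$, so the contribution is at most $n^{-1}\sum_{0<|j|\le2n}|j|^{2-2\rho}\lesssim1$. Hence $\sum_k\|\LL_{\sigma^n_k}\omega\|^2_{H^{-\rho}}\lesssim\|\omega\|^2_{H^{-1}}$ uniformly in $n$.

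BDG then gives $\EE\|\M^n_t-\M^n_s\|^{2q}_{H^{-\rho}_g}\lesssim|t-s|^q\,\EE\sup_{[0,T]}\|\omega^n\|^{2q}_{H^{-1}}$. \autoref{lem: H-1 estimate_local_nonlinear}, which holds for every exponent, turns the right-hand side into $|t-s|^q\,\EE\|\omega_0\|^{2q}_{H^{-1}}$. This is exactly what yields the homogeneous right-hand side of \eqref{time estimate mart}, for every $a>1$.

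\paragraph{Your substitute route.} Going through \eqref{a_priori_estimate_cutoff} does not reach the statement. It bounds the $H^{-1}$ increments by $|t-s|^{a(1-2/p)}\big(C(T)(1+\|\omega_0\|^p_{B^{-2/p}_{g,2,p}})\big)^{2a/p}$, but only when $2a\le p$.

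Your remark that larger $a$ follows by interpolating moments is incorrect. Jensen/Lyapunov only lowers exponents, and nothing you have controls higher moments of $\int_0^T\|\omega\|^p_{H^0_g}\,\mathrm{d}t$. So you get the H\"older bound only for $a\le p/2$. Kolmogorov needs $a(1-2/p)>1$, which is compatible with $a\le p/2$ precisely because $p>4$. The right-hand side also has the wrong form, as you concede.

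That weaker estimate would in fact suffice for the tightness argument in the proof of \autoref{thm: main scaling}, where $\omega_0\in\B_p(M)$ and one moment above the Kolmogorov threshold is enough. It even places $\M^n$ in $H^{-1}$ rather than $H^{-\rho}_g$ and avoids the lattice-sum lemma. But it is not the proposition as stated.

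\paragraph{Step 2.} This matches the paper's deferral to \cite[Lemma 3.4]{flandoli_high_2021}, with one caveat. The moment $\EE\big(\int_0^T\|\omega\|^2_{H^0_g}\big)^{a}$ does not follow from the statement of \autoref{lem: H-1 estimate_local_nonlinear}. You need to rerun the BDG argument from its proof to obtain it.
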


\begin{proof}
    First, we show that $\M^n_t\in W^{s,q}(0, T;  H^{-\rho}_g)$ for some $s,q>0$ $\rho> 2$, then we obtain the statement by an usual embedding theorem. 
    Since $\rho >1$ we have the embedding $H^{-\rho}_{\operatorname{div}}\hookrightarrow H^{-\rho}_g$, we have, by BDG 
    \begin{align*}
        \expt[]{\|\M^n_t-\M^n_s\|_{ H^{-\rho}_g}^{2q}} &\lesssim \expt[]{\Big(\sum_{k\in I}\int_s^t\|\LL_{\sigma^n_k}\omega^n_r\|_{ H^{-\rho}}^{2}\mathrm{d}r\Big)^q} \\
        &\lesssim  \expt[]{\Big(\sum_{k\in I}\int_s^t\|\Pi Q(\sigma^n_k \times \omega^n_r)\|_{H^{-\rho +1}}^{2}\mathrm{d}r\Big)^q}.
    \end{align*}
     Now we have, for $k=(m,j)\in \Z_0^3\times \{1,2\}$ and $n\le |m|\le 2n$
    \begin{equation*}
        Q(\sigma^n_k  \times \omega^n_r)= Z_n^{-1}\sum_{\substack{(l,i)\in \Z_0^3 \times \{1,2\}\\ l\neq -m}} |m|^{-3/2}\omega^{l,i,n}_r a_{m,j} \times a_{l,i}e^{i(m+l)\cdot x},
    \end{equation*}
    where for each $(\alpha,\beta)\in I$
    \begin{align*}
    \omega^{\alpha,\beta,n}_r=\langle \omega^n, a_{\alpha,\beta}e^{i \alpha\cdot x}\rangle.  
    \end{align*}
    Therefore, since $\rho >1$, 
    \begin{align*}
        \sum_{\substack{k\in I \\ k=(m,j)}}\|\Pi Q(\sigma_k \times \omega_r)\|_{H^{-\rho +1}}^{2} &\le Z_n^{-2 } \sum_{\substack{(m,j) \in Z_0^3\times\{1,2\} \\ n\le |m| \le 2n}}|m|^{-3}\sum_{\substack{(l,i)\in \Z_0^3 \times \{1,2\}\\ l\neq -m }} |l+m|^{-2\rho+2}|\omega^{l,i,n}_r|^2\\
        &\le2 Z_n^{-2 } \sum_{\substack{(l,i)\in \Z_0^3 \times \{1,2\}}}\frac{|\omega^{l,i,n}_r|^2}{|l|^2} \Big(\sum_{\substack{m \in \Z_0^3 \\ n\le |m| \le 2n \\ l\neq -m }}|m|^{-3} |l|^2|l+m|^{-2\rho+2}\Big)\\
        &\lesssim  \|\omega^n_r\|^2_{H^{-1}}
        \end{align*}
        where the last inequality holds provided that the following series is convergent, and uniformly bounded in $|l|\in \Z_0^3$
        \begin{align*}
            \sum_{\substack{m \in \Z_0^3 \\ n\le |m| \le 2n \\ l\neq -m }}|m|^{-3} |l|^2|l+m|^{-2\rho+2}.
        \end{align*}
        This fact is proved in \cite[Lemma 4.5]{butori_meanfield_2025}, for any $\rho>2$. The key observation in the proof is that the bad part of this sum consists of the terms for which $|l+m|\le |l|/2$, for which we use that $|m| \ge |l|/2$ to reduce to the convergent series $\sum_{n\le |m| \le 2n} |m|^{-1}|l+m|^{-2\rho + 2} \lesssim 1$. 
        
           This inequality implies that
           $$ \expt[]{\|\M^n_t-\M^n_s\|_{ H^{-\rho}}^{2q}}\lesssim |t-s|^q \EE\left[\|\omega^n\|_{L^\infty_t H_x^{-1}}^{2q}\right]\lesssim |t-s|^q \EE\left[\|\omega_0\|_{ H^{-1}}^{2q}\right],$$
           where the control on the $H^{-1}$ norm follows from \autoref{lem: H-1 estimate_local_nonlinear}. Consequently, for $\alpha < 1/2 $
$$\int_0^T\int_0^T \frac{ \expt[]{\left\|\M^n_t- \M^n_s \right\|_{H^{-\rho}_g}^{2q}}}{|t-s|^{1+2\alpha q}}\mathrm{d}s\mathrm{d}t \lesssim 1.$$
 This estimate implies $\M^n_t\in W^{\alpha,2q}(0, T;  H_g^{-\rho})$, and provided $2\alpha q >1, \alpha< 1/2 $, which is true for instance for $q= 2$ and $\alpha = 1/3$, this space embeds in an H\"older space. 
 
The proof of the second bound follows easily from this, as in \cite[Lemma 3.4]{flandoli_high_2021}. 
\end{proof}

We are now in a position to prove the main result of the section.

\begin{proof}[Proof of \autoref{thm: main scaling}]
    Thanks to the uniform estimates provided by \autoref{prop:global_well_cutoff} and \autoref{lem: time estimates}, and the compact inclusion
    $$C(0, T; B^{-\frac{2}{p}}_{g,2,p}) \cap C^{s_1}(0, T; H^{-r_1}_g)\underset{c}{\hookrightarrow} C(0, T; H_g^{r_0})$$
   with $-1/2<r<r_0 < -2/p$, we can apply Prokhorov's theorem to the sequence $\mu^n$ of laws of $\omega^n$, obtaining a weak limit $\mu$ on $C(0, T; H^{r_0}_g)$.  
   Next by Skhorohod's representation theorem, up to passing to subsequences we can find a new probability space that for simplicity we continue to denote $(\Omega, \F, \F_t, \PP)$ and processes $$\left(\tilde \omega^n, W^n= (W_t^{k,n})_{k\in I}\right), \quad \left(\tilde \omega, W= ( W_t^k)_{k\in I}\right)$$ such that $\tilde \omega^n \rightarrow \tilde \omega$ in $C([0, T]; H^{r_0}_g),\ \tilde{u}^n:=K[ \tilde{\omega}^n]\rightarrow K[ \tilde{\omega}]=:\tilde{u}$ in $C([0,T];L^2)$ and $ W^n \rightarrow  W$  in $C(0, T; \R^I)$ $\PP-$almost surely (in the new probability space) and the processes $\tilde\omega^n$ are still weak solutions of equation \eqref{main_ito_cutoff}. Moreover, thanks to \autoref{prop:global_well_cutoff}, we also have, up to further subsequences, 
   \begin{align*}
       \tilde \omega^n \rightharpoonup \tilde \omega \quad \text{in } L^p(\Omega\times (0, T); H^0_g).
   \end{align*}
   It follows that for every $\psi \in C^\infty(\T^3;\R^3)$, we have
   $$ \sup_{t\in [0,T]} |\brak{\tilde \omega^n_t - \tilde \omega_t, \psi}| + \sup_{t\in [0,T]} \left|\int_0^T \brak{\tilde \omega^n_t - \tilde \omega_t, (A + \nu \Delta)\psi}\mathrm{d}t\right| \rightarrow 0. $$
   Next,
   \begin{align*}
       \sup_{r\in [0,T]}&\left|\int_0^r   \brak{\phi_{R}(\|\tilde \omega^n_t\|_{H^r_g})\LL_{\tilde u^n_t}\tilde \omega^n_t - \phi_{R}(\|\tilde \omega_t\|_{H^r_g})\LL_{\tilde u_t}\tilde\omega_t , \psi} \mathrm{d}t\right|  \\
       &\lesssim  \sup_{r\in [0,T]}\left| \int_0^r \brak{\tilde u^n_t\otimes \tilde u^n_t- \tilde u_t\otimes \tilde u_t , \nabla (\nabla\times \psi)}\mathrm{d}t \right| \\ & + \sup_{r\in [0,T]}\left| \int_0^r [\phi_{R}(\|\tilde \omega^n_t\|_{H^r_g})- \phi_{R}(\|\tilde \omega_t\|_{H^r_g})]\brak{\tilde u_t\otimes \tilde u_t , \nabla (\nabla\times \psi)}\mathrm{d}t \right|  \\
       &\lesssim  \sup_{r\in [0,T]}\int_0^r(\|\tilde u_t^n\|_{L^2} + \|\tilde u_t\|_{L^2}) \|\psi\|_{C^2}\|\tilde u_t^n - \tilde u_t\|_{L^2} \mathrm{d}t \\
       & + \sup_{r\in[0, T]} (\|\tilde \omega_r^n \|_{H^r} - \|\tilde \omega_r\|_{H^r}) \sup_{r\in [0, T]}\left |\int _0^r \brak{\tilde u_t\otimes \tilde u_t , \nabla (\nabla\times \psi)}\mathrm{d}t \right|
       \\ &\rightarrow 0\quad\mathbb{P}-a.s.
        \end{align*}
        Finally, thanks to the Burkholder-Davis-Gundy inequality
        \begin{align*}
            \expt{\sup_{t\in [0, T]}\Big(\sum_{k \in I} \int_0^t \brak{\sigma^n_k \times \tilde \omega^n_s , \nabla \times \psi} \mathrm{d}W_s^{k,n}\Big)^2} &\lesssim  \expt{ \sum_{k \in I} \int_0^T \brak{\sigma^n_k , \tilde \omega^n_s \times (\nabla \times \psi)}^2 \mathrm{d}s}  \\
            &\lesssim \|\theta^n_k\|^2_{l^\infty}\|\psi\|_{C^1}^2 \expt{\int_0^T\|\tilde \omega^n_s \|_{L^2}^2\mathrm{d}s} \rightarrow 0,
        \end{align*}
        since clearly $L^p(\Omega\times (0, T); H^0_g) \subset L^2(\Omega\times (0, T); L^2)$. Thus, up to another subsequence 
        \begin{align*}
            \sup_{t\in [0, T]}\left|\sum_{k \in I} \int_0^t \brak{\sigma_k \times \tilde \omega^n_s , \nabla \times \psi} \mathrm{d}W_s^{k,n}\right| \rightarrow 0 \quad \PP- a.s.
        \end{align*}
        It follows that $\tilde \omega \in C(0, T; H^{r_0}_g) \cap L^p(0, T; H^0_g)$ is a weak solution of \eqref{main_det_cutoff} in the sense that for every $\psi $ zero-mean, divergence free and smooth it holds 
        \begin{align}
            \brak{\tilde\omega_t, \psi}- \brak{\tilde\omega_0, \psi}= \int_0^t \brak{\omega_s, (A+ \nu \Delta )\psi}-\phi_r(\|\tilde\omega_s\|_{H^r_g})\brak{\tilde u_s \times \tilde \omega_s, \nabla \times \psi}\mathrm{d}s.
        \end{align}
        The same argument as in \cite[Lemma 5.6]{agresti_global_2024} shows that $\tilde\omega \in W^{1,p}(0, T; H^{-2}_g) \cap L^p(0, T; H^0_g) \subset C([0, T]; B^{-2/p}_{g,2,p})$, and thus it is also a $p$-solution in the sense analogous to \autoref{def:local_well_posed}. Hence, by uniqueness, cf. \autoref{prop:global_well_cutoff}, it coincides with the unique $p$-solution.
        It follows that $\mu^n \rightarrow \delta_{\omega^{R,\det}}$. Thus, since the limit measure is a delta, the whole sequence $\omega^n$ converges in probability to $\omega^{R,\det}$, in the original probability space.
        We are left to prove the uniformity with respect to $\omega_0 \in \B(M)$. First we notice that the proof holds in the same way if we let the initial condition for the stochastic problem depend on $n$ with $\omega_0^n \rightharpoonup \omega^0$ in $B^{-\frac{2}{p}}_{g,2,p}$ and we replace $\omega^{R,\det}$ with $\omega^{R,0,\det}$ given by the solution of \eqref{main_det_cutoff} with initial data $\omega_0$. Then, if by contradiction there existed a sequence of initial data $\{\omega_0^n\}_{n\ge 1} \in \B_p(M)$ and $\delta>0$ such that for every $n\ge 1$ 
         \begin{equation*}
             \PP\Big( \sup_{t\in [0, T]}\|\omega^n_t - \omega^{R,n,\det}_t\|_{H^{r_0}_g} > \eps \Big) \ge \delta,
         \end{equation*}
         then, up to a subsequence, we could assume that $\omega_0^n \rightharpoonup \omega_0$. This implies
         \begin{align*}
             \PP\Big( \sup_{t\in [0, T]}\|\omega^n_t - \omega^{R,0,\det}_{t}\|_{H^{r_0}_g} > \frac{\eps}{2} \Big) \rightarrow 0.
         \end{align*}
         The latter gives a contradiction, since it is easy to show that $\|\omega^{R,n\det} - \omega^{R,0\det}\|_{L^\infty(0, T; H^{r_0}_g)} \rightarrow 0$. 
\end{proof}

\section{Proof of \autoref{thm:main}}\label{sec:proof_of_main}
The first step is to prove a bound on the $H^{r_0}_g$- norm for the unique solution of the deterministic system without cutoff. 
\begin{lemma}\label{lem:dissipation}
    Let $M\ge 1$ and $r_0, \gamma$ satisfying $-\frac{1}{2} < r_0 < \gamma < 0$. There exists $\nu_0>0$ depending only on $M$ such that for all $\nu>\nu_0$ and all $\|\omega_0\|_{H^{\gamma}_g}\le M $, there exists a unique global solution $\omega^{\det} $ of 
    \begin{equation}\label{main_det}
\begin{cases}
    \partial_t\omega^{\det} + \LL_u \omega^{\det} + A\omega^{\det} - \nu\Delta \omega^{\det}   =0 \\
    u= K[ \omega^{\det}].
    \end{cases}
\end{equation}
Moreover, it holds, for some $\eta_0 >0$  
    \begin{align}
    \sup_{t>0}\|\omega^\det_t\|_{H_g^\gamma}^2 + \|\omega^\det \|_{L^2(0, +\infty; H^{\gamma+1}_g)}^2 &\lesssim \|\omega_0\|_{H_g^\gamma}^2 \label{det_energy_estimates_1}\\
        \|\omega^\det_t\|_{H^{r_0}_g} &\lesssim  e^{-\eta_0 t} \|\omega_0\|_{H^\gamma_g} .\label{det_energy_estimates_2}
    \end{align}
\end{lemma}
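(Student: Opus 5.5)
We argue by an energy estimate in $H^\gamma_g$, in which the extra eddy dissipation $-\nu\Delta$ absorbs the nonlinearity once $\nu$ is large relative to $M$. Local well-posedness of \eqref{main_det} for data in $H^\gamma_g$ with $\gamma>-\tfrac12$ comes from the deterministic analogue of \autoref{prop:local_well_nonlinear} (equivalently, \autoref{rem:stoch2det}), since $H^\gamma_g$ is subcritical. Globality will then follow from an a priori bound on $\|\omega^\det_t\|_{H^\gamma_g}$ together with $L^2_tH^{\gamma+1}_g$ integrability, via the standard blow-up criterion. Uniqueness is inherited from the local theory.

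We test the equation with $\omega$ in the $H^\gamma_g$ inner product, first on a Galerkin truncation or on smooth solutions, using the instantaneous regularization \eqref{eq:instantaneous_regularization_omega}:
\begin{align*}
\frac12\frac{\mathrm d}{\mathrm dt}\|\omega\|_{H^\gamma_g}^2+\|\omega\|_{H^{\gamma+1}_g}^2+\nu\|\nabla\omega\|_{H^\gamma_g}^2=-\langle \LL_u\omega,\omega\rangle_{H^\gamma_g}.
\end{align*}
Here $\langle -A\omega,\omega\rangle_{H^\gamma_g}=\|\omega\|^2_{H^{\gamma+1}_g}$ by the definition of the spaces, and $\|\nabla\omega\|_{H^\gamma_g}\geq 2\pi\|\omega\|_{H^\gamma_g}$ by the Poincaré inequality on zero-mean fields.

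The key step, and the main obstacle, is the nonlinear estimate
\[
|\langle \LL_u\omega,\omega\rangle_{H^\gamma_g}|\lesssim \|\omega\|_{H^\gamma_g}\,\|\omega\|_{H^{\gamma+1}_g}^{2-\kappa}\,\|\omega\|_{H^{\gamma}_g}^{\kappa'}
\]
for some interpolation exponents. We obtain it by duality, moving one $H^{\gamma+1}_g$-type weight onto each factor. We bound $\|\LL_u\omega\|_{H^{\gamma-1}_g}$ by a product of $H^{s}$ norms of $\omega$ through the Sobolev product rule. Here $u\otimes u$ gains one derivative over $\omega$, and $\gamma>-1/2$ ensures the required exponents lie in the subcritical range, as in \autoref{lem:nonlinear_est} and \autoref{lem:nonlinear_est_cutoff}. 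We then interpolate between $H^\gamma_g$ and $H^{\gamma+1}_g$ (by \autoref{lemma_interpolation}) and use that $\|\cdot\|_{H^{\gamma+1}}\lesssim \|\omega\|_{H^{\gamma+1}_g}$ dominates Sobolev norms of order $\gamma+1$. The goal is a bound of the form
\[
|\langle \LL_u\omega,\omega\rangle_{H^\gamma_g}|\le \tfrac12\|\omega\|_{H^{\gamma+1}_g}^2+\tfrac{\nu}{2}\|\nabla\omega\|^2_{H^\gamma_g}+C\|\omega\|_{H^\gamma_g}^{q}\|\nabla \omega\|^2_{H^{\gamma}_g}
\]
for some $q>0$. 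In the logarithmic case, subcriticality means the nonlinearity is controlled by $\|\omega\|_{H^\gamma}^{\beta}\|\omega\|_{H^{\gamma+1}}^{2}$ with $\beta>0$, rather than by a pure power of the dissipation. If the interpolation does not close directly, we first try splitting $\omega$ into low and high frequencies.

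A continuity/bootstrap argument then closes the estimate. As long as $C\|\omega_t\|^{q}_{H^\gamma_g}\le \nu/4$, which holds initially when $\nu>\nu_0(M)\sim CM^q$, we get
\[
\frac{\mathrm d}{\mathrm dt}\|\omega\|_{H^\gamma_g}^2+\|\omega\|_{H^{\gamma+1}_g}^2+\tfrac{\nu}{2}(2\pi)^2\|\omega\|^2_{H^\gamma_g}\le 0 .
\]
Hence $\|\omega_t\|_{H^\gamma_g}$ is nonincreasing, the bootstrap condition persists for all time, and integration yields \eqref{det_energy_estimates_1}. Gronwall then gives exponential decay in $H^\gamma_g$ with rate $\eta_0\sim\nu$, and \eqref{det_energy_estimates_2} follows from the embedding $H^\gamma_g\hookrightarrow H^{r_0}_g$ for $r_0<\gamma$.
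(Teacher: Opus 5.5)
Your argument is correct, and it takes a genuinely different and more direct route than the paper's.

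\textbf{The paper's route.} The paper never absorbs the nonlinearity pointwise in time. After the $H^\gamma_g$ energy inequality \eqref{eq:e.e._gamma}, it extracts $\nu$-smallness from the velocity energy balance \eqref{det_nonlinear_en_estimate}, which rests on the cancellation $\langle \LL_u\omega,(-\Delta)^{-1}\omega\rangle=0$, in the form $\|\omega\|_{L^4(0,T;H^{-1})}\lesssim \nu^{-1/4}\|\omega_0\|_{H^{-1}}$. It then interpolates to get $X\le R\|\omega_0\|_{H^\gamma_g}+R\nu^{-\delta}X^2$ for $X=\|\omega\|_{L^4(0,T;H^{\gamma+1/2}_g)}$ and closes this by a continuity argument. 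Finally, it obtains \eqref{det_energy_estimates_2} from Gr\"onwall in $H^{-1}$, interpolated against the uniform $H^\gamma_g$ bound.

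\textbf{Your route.} You use only subcriticality, and your key step can be made precise without any frequency splitting. Start from the trilinear bound $|\langle \LL_u\omega,\omega\rangle_{H^\gamma_g}|\lesssim \|\omega\|_{H^{\gamma+1}_g}\|\omega\|_{H^{\gamma/2+1/4}_g}^2$, which the paper derives from \eqref{eq:nonlinear-gamma}. The $H^s_g$ weights are exactly log-convex in $s$, so $\|\omega\|_{H^{\gamma/2+1/4}_g}\le \|\omega\|_{H^\gamma_g}^{3/4+\gamma/2}\|\omega\|_{H^{\gamma+1}_g}^{1/4-\gamma/2}$. Combining the two gives $|\langle \LL_u\omega,\omega\rangle_{H^\gamma_g}|\lesssim \|\omega\|_{H^{\gamma+1}_g}^{3/2-\gamma}\|\omega\|_{H^\gamma_g}^{3/2+\gamma}$. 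Thus $\kappa=\tfrac12+\gamma>0$, exactly because $\gamma>-\tfrac12$. Young and Poincar\'e then produce your target form with $q=4/(1+2\gamma)$. The extra $\tfrac{\nu}{2}\|\nabla\omega\|_{H^\gamma_g}^2$ you placed on the right-hand side is unnecessary.

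\textbf{One point to keep straight.} The leftover must be charged to $\|\nabla\omega\|_{H^\gamma_g}$ (or, via Poincar\'e, to $\|\omega\|_{H^\gamma_g}$), never to $\|\omega\|_{H^{\gamma+1}_g}$. The eddy term does not control the latter when $g$ is unbounded. Your target form respects this.

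\textbf{What each buys.} Your route gives monotonicity of $\|\omega_t\|_{H^\gamma_g}$ and an explicit, $\nu$-independent constant in \eqref{det_energy_estimates_1}. It also gives exponential decay in $H^\gamma_g$ itself with rate $\eta_0\gtrsim\nu$, which is stronger than \eqref{det_energy_estimates_2}, and an explicit threshold $\nu_0\sim M^{q}$. The paper's space-time bootstrap is modelled on \cite{agresti_global_2024}. That is the version of the argument that survives in stochastic settings, where no pointwise-in-time monotonicity is available. For this deterministic lemma, however, it is more roundabout than needed.
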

\begin{proof}
To prove the local well-posedness we employ the following estimate on the nonlinearity 
\begin{equation}\label{eq:nonlinear-gamma}
    \|\LL_u \xi\|_{H_g^{\gamma -1}} \lesssim \|u\|_{H_g^{\gamma/2 +5/4}} \|\xi\|_{H_g^{\gamma/2 +1/4}}
\end{equation} 
This is obtained by interpolation (see \cite[Theorem 4.1.1]{MR482275}) showing that the desired inequality holds for $\gamma =-1$ (\emph{c.f.} \autoref{lem:nonlinear_est}) and $\gamma = 0$. It follows that if $\gamma$ lies in a compact subset of $(0,1)$, the hidden constant can be made uniform in $\gamma$. To prove the case $\gamma =0$, introducing $\nabla \times v = \xi$, we have
\begin{align}
     \|\LL_u \xi\|_{H^{-1}} = \|u \cdot \nabla v\|_{L^2} \le \|u\|_{L^{12}} \|\nabla v\|_{L^{12/5}} \le \|u\|_{H^{5/4}} \|v\|_{H^{5/4}}\leq  \|u\|_{H^{5/4}_g} \|\xi\|_{H^{1/4}_g}
\end{align}
We now show how to obtain local well-posedness. It suffices to prove an estimate in $H^\gamma_g$. We have :
        \begin{align*}
            \frac{\mathrm{d}}{\mathrm{d}t}\|\omega^\det\|_{H_g^\gamma}^2 + 2 \nu\|\nabla \omega^\det\|_{H_g^\gamma}^2 + 2\|\omega^\det\|_{H_g^{\gamma + 1}}^2 = -2\brak{A^{\gamma/2} \LL_u \omega^\det, A^{\gamma /2}\omega^\det}_{L^2}.
        \end{align*}
        Thanks to \eqref{eq:nonlinear-gamma} we have 
        $$\left|\brak{A^{\gamma/2} \LL_u \omega^\det, A^{\gamma /2}\omega^\det}_{L^2}\right| \lesssim \|\omega^\det\|_{H^{\gamma+1}_g}\|\omega^\det\|^2_{H^{\gamma/2 +1/4}_g} $$
        In particular, it follows 
        \begin{equation}\label{eq:e.e._gamma}
            \sup_{[0, T]} \|\omega^\det\|_{H^\gamma_g}^2 + \|\omega^\det\|^2_{L^2(0, T; H_g^{\gamma + 1})} \lesssim \|\omega_0\|_{H^\gamma_g}^2 + \|\omega^\det\|^4_{L^4(0, T; H_g^{\gamma/2 + 1/4})}.
             \end{equation}
             Since $\gamma  > \gamma /2 + 1/4$ thanks to our assumptions, the above allows us to obtain the local well-posedness by standard arguments; we omit the easy details. 
In addition, thanks to \cite[Corollary 2.3(i)]{PRUSS20182028}, we have the blow up criteria
\begin{equation*}
    \tau < \infty \implies \sup_{t\in [0, \tau)}\|\omega^\det\|_{H^{\gamma}_g} = \infty.
\end{equation*}
Next, we show how \eqref{det_energy_estimates_2} follows from \eqref{det_energy_estimates_1}.
    First, we recall that thanks to the relation 
    $$\langle\LL_u \omega, (-\Delta)^{-1}\omega\rangle =0,$$
    the energy inequality for $\omega_\det$ gives
    \begin{equation}\label{det_nonlinear_en_estimate}
        \|\omega^\det_t\|^2_{H^{-1}} + \nu\int_0^t \|\omega^\det_s\|_{L^2}^2\mathrm{d}s + \int_0^t \|\omega^\det_s\|_{H^{0}_g}^2\mathrm{d}s \le  \|\omega_0\|^2_{H^{-1}}.
    \end{equation}
   Thus, we get by Gr\"onwall's inequality that for some $\eta_0 >0$, it holds
    $\|\omega^\det_t\|_{H^{-1}} \le  e^{-\eta_0 t}\|\omega_0\|_{H^{-1}} \le  e^{-\eta_0 t}\|\omega_0\|_{H^{\gamma}}$ and by interpolation, 
    \begin{equation}
        \|\omega^\det\|_{H^{r_0}_g}\le e^{-\eta_0 t}\|\omega_0\|^{\theta}_{H_g^{\gamma}}\|\omega^\det\|_{H^{\gamma}_g}^{1-\theta}
    \end{equation}
    This concludes the proof if we can show that for some constant $K_0$, independent of time, it holds 
    \begin{equation}\label{det_goal}
        \sup_{t\ge 0} \|\omega^\det_t\|_{H^\gamma_g} \le K_0 \|\omega_0\|_{H_g^\gamma}.
         \end{equation}
        Equation \eqref{det_goal} also implies $\tau = +\infty$ by the above blow-up criterion. 
    The key point of the remaining part of the proof is to bound the last term in \eqref{eq:e.e._gamma}. We make use of the freedom to choose $\nu$ as large as we want. 
By interpolation we have \begin{equation}\label{2p-1p estimate}
    \|\omega^\det\|_{L^{4}(0, T; H_g^{\gamma + 1/2})} \le\|\omega^\det\|^{1/2}_{L^{\infty}(0, T; H^{\gamma}_g)}\|\omega^\det\|^{1/2}_{L^{2}(0, T; H_g^{\gamma+1})}. 
    \end{equation}
    Using Young's inequality and the estimate \eqref{eq:e.e._gamma}, we get, for a hidden constant independent of $\nu$,
\begin{align*}
    \|\omega^\det\|_{L^{4}(0, T; H_g^{\gamma +1/2})} \lesssim \|\omega_0\|_{H_g ^\gamma} + \|\omega^\det\|^2_{L^{4}(0, T; H_g^{\gamma/2 + 1/4})}.
\end{align*}
Now we interpolate again $H_g^{\gamma /2+1/4}= [H^{-1}, H_g^{\gamma + 1/2}]_{\beta}$ with $\beta = (2\gamma + 1)/(4\gamma +6) \in (0,1)$ to get 
\begin{align}\label{2p-1p_interpolated}
    \|\omega^\det\|_{L^{4}(0, T; H_g^{\gamma + 1/2})} \lesssim \|\omega_0\|_{H_g^{\gamma}} + \|\omega^\det\|^{2\beta}_{L^{4}(0, T; H^{-1})}\|\omega^\det\|^{2-2\beta}_{L^{4}(0, T;H_g^{\gamma + 1/2})}.
\end{align}
Another round of interpolation also implies
\begin{align*}
    \|\omega^\det\|_{L^{4}(0, T; H^{-1})} &\lesssim  \|\omega^\det\|_{L^{4}(0, T; H^{-1/2})} \\
    &\lesssim \nu^{-1/4}( \nu^{1/2}\|\omega^\det\|_{L^{2}(0, T; L^2)})^{1/2}\|\omega^\det\|^{1/2}_{L^\infty(0, T; H^{-1})}\\
    &\lesssim  \nu^{-1/4} \|\omega_0\|_{H^{-1}},
\end{align*}
where we have used relation \eqref{det_nonlinear_en_estimate} in the last inequality. 
Plugging this estimate in \eqref{2p-1p_interpolated} we get, for $\delta =\beta/[2(1-\beta)]$
\begin{align*}\label{2p-1p_interpolated2}
    \|\omega^\det\|_{L^{4}(0, T; H_g^{\gamma +1/2})} &\lesssim \|\omega_0\|_{H^{\gamma}_g} + (\nu^{-1/4} \|\omega_0\|_{H^{-1}})^{2\beta}\|\omega^\det\|^{2-2\beta}_{L^{4}(0, T;H^{\gamma + 1/2})} \\
    &\lesssim \|\omega_0\|_{H^{\gamma}_g} + \|\omega_0\|_{H^{-1}}^2 +\nu^{-\delta} \|\omega^\det\|^2_{L^{4}(0, T;H^{\gamma + 1/2})}.
\end{align*}
Since $\|\omega_0\|_{H^{\gamma}_g}\le M$, we have shown the existence of $R\ge 0$, $\delta >0$, depending only on $ M$ for which 
\begin{align*}
    \|\omega^\det\|_{L^{4}(0, T; H_g^{\gamma + 1/2})} \le R \|\omega_0\|_{H^\gamma_g} + R\nu^{-\delta}\|\omega^\det\|^{2}_{L^{4}(0, T;H_g^{\gamma + 1/2})}.
\end{align*}
Now a classical bootstrapping argument (see \cite[Step 4-5, proof of Theorem 5.1]{agresti_global_2024}) allows us to deduce that if $\nu \ge (8R^2M)^{1/\delta}$, then 
$$\|\omega^\det\|_{L^{4}(0, T; H_g^{\gamma + 1/2})} \le 2R\|\omega_0\|_{H^\gamma_g}.$$
Finally, again using \eqref{eq:e.e._gamma} we get
\begin{align*}
     \sup_{s\in [0, t]}\|\omega^\det_s\|_{H_g^{\gamma}} + \|\omega^\det\|_{L^2(0, T; H^{\gamma +1}_g)} &\lesssim \|\omega_0\|_{H^\gamma_g} +  \|\omega^\det\|^2_{L^{4}(0, T; H^{\gamma/2-1/4}_g)} \\
    &\lesssim \|\omega_0\|_{H^\gamma_g} +  \|\omega^\det\|^2_{L^{4}(0, T; H^{\gamma+1/2}_g)}\\
    &\lesssim \|\omega_0\|_{H_g^\gamma} + 2R\|\omega_0\|^2_{H_g^\gamma}.
\end{align*}
Since $\|\omega_0 \|_{H_g^\gamma }\le M$, we have obtained \eqref{det_goal}, with $K_0$ depending only on $M$. This concludes the proof. 
\end{proof}
\begin{remark}\label{rmk:uniqueness}
    Let $p>4$ and $-\frac{1}{2}<r<-\frac{2}{p}$ such that $r\leq r_0<\gamma$ and $R$ larger than the right hand side (including the hidden constant) appearing in \eqref{det_energy_estimates_2}, then if $\omega_0$ additionally belongs to $B^{-2/p}_{g,2,p}$ then, the solution given by \autoref{lem:dissipation} coincide with the unique $p$ solution of \eqref{main_det_cutoff}. By the assumption on $R,\ r,r_0$ if $\omega^{\det}$ is the solution given by previous lemma we have that 
    \begin{align*}
       \|\omega_t^{\det}\|_{H^r_g}\leq R. 
    \end{align*}
     Therefore it is enough to check that $\omega^{\det}$ belongs to the proper regularity class, in particular 
     \begin{align*}
         \omega^{\det}\in W^{1,p}(0,T;H^{-2}_g)\cap L^p(0,T;H^{0}_g)\hookrightarrow C([0,T];B^{-2/p}_{g,2,p}).
     \end{align*}
     Recall that from equation \eqref{ineq:nonlinear_cutoff_interp} there exists $k\in (0,1)$ such that
     \begin{align*}
          \|\LL_u \omega \|_{H^{-2}_g} \lesssim \|\omega\|^{1-k}_{H^0_g}\|\omega \|^{1+k}_{H^r_g}.
     \end{align*}
    Since $\gamma>\frac{1}{2},$ we already know that $\omega^{\det}\in L^2(0,T;H^{0}_{g})$ and it is enough to check for $p_j=p\wedge(\frac{2}{(1-k)^j})$
    \begin{align*}
        \omega^{\det}\in L^{p_j}(0,T;H^{0}_{g})\implies  \omega^{\det}\in L^{p_{j+1}}(0,T;H^{0}_{g})\cap  W^{1,p_{j+1}}(0,T;H^{-2}_{g}).
    \end{align*}
    In order to do so, we apply maximal $L^p$ regularity for the operator $\nu\Delta+A$ on $H^{-2}_g$ and forcing $ f=\phi_{R}(\|\omega^{R,\det}\|_{H^r_g})\LL_u \omega^{R,\det}.$ The maximal regularity for the operator $\nu\Delta+A$ can be proved as discussed in \autoref{app_technical_res}. Assume now that $\omega^{\det}\in L^{p_j}(0,T;H^{0}_{g})$, then by \eqref{ineq:nonlinear_cutoff_interp} we have
    \begin{align*}
      \|\LL_u \omega \|_{H^{-2}_g}\lesssim_R  \|\omega\|^{1-k}_{H^0_g}.
    \end{align*}
    Therefore
    \begin{align*}
        f\in L^{p_{j+1}}(0,T;H^{0}_{g})
    \end{align*}
    and the claim follows.
\end{remark}

Now we have all the ingredients to complete the proof of our main theorem.
\begin{proof}[Proof of \autoref{thm:main}]
    Thank to the \autoref{lem:dissipation}, there exists $\nu >0$, $\eta$ and $R_0$ all depending only on $p, M$, such that 
    $$\sup_{t\in [0, T]} e ^{\eta_0t}\|\omega^\det\|_{H^{r_0}} \le R_0.$$
    In particular, for every $\delta >0$ choosing $R> R_0 + \delta$ in the definition of the cutoff $\phi_R$, we see easily that, by uniqueness, $\omega^\det$ coincide with $\omega^{R,\det}$. Thus, thanks to \autoref{thm: main scaling}, for every $\eps>0$ there exists $n\ge 1$ depending only on $\delta, M, \eps$ such that, for any $r<r_0 <-2/p$ and uniformly with respect to $\omega_0 \in \B_p(M)$,
    \begin{equation*}
        \PP\Big(\sup_{t \in [0, T]} \|\omega^n - \omega^\det\|_{H^{r_0}_g} >\delta/2\Big) \le \eps.
    \end{equation*}
    This implies, of course, that 
    \begin{equation*}
        \PP\Big(\sup_{t \in [0, T]} \|\omega^n\|_{H^{r_0}_g} >R\Big) \le \eps.
    \end{equation*}
    Thus, on an event of probability $1-\eps$, we have $\phi_R(\|\omega^n\|_{H^r_g})=1$, and, by uniqueness, $\omega^n$ coincides with the unique $p$-solution of \eqref{main_ito} provided by \autoref{prop:local_well_nonlinear}. We conclude that on this event, $\tau \ge T$. 
\end{proof}
\section{Proof of \autoref{thm_global}}\label{sec:proof_global}
Before proceeding with the proof of \autoref{thm_global}, we need slight generalizations of \autoref{thm: main scaling} to random initial conditions.
\begin{corollary}\label{cor_uniform_random_cond}
    Let $\nu>0$, $R> 0$, $M\ge 1$ and $\{\theta_k^n\}_{m\in I}$ time independent satisfying \autoref{hyp_noise}. Assume $p \in (4, \infty)$, $-1/2 < r<- 2/p$. Let $\omega^{n,\omega_0}$ be the solution of \eqref{main_ito_cutoff} associated to $\{\theta_k^n\}_{k\in I}$ defined as in \autoref{hyp_noise} and some $\mathcal{F}_0$ measurable initial datum $\omega_0\in \B_p(M)\ \mathbb{P}-a.s.$ Let $\omega^{R,\det,\omega_0}$ be the solution of \eqref{main_det_cutoff}
with the same \emph{random} initial datum $\omega_0$. Then for all $r_0 < -2/p$ and $\eps >0$ we have 
    \begin{equation*}
        \lim_{n\rightarrow\infty}\sup_{\omega_0\in L^0_{\mathcal{F}_0}\left(\Omega;\B_p(M)\right)}\PP\left( \sup_{t\in [0, T]}\|\omega^{n,\omega_0}_t - \omega^{R,\det,\omega_0}_{t}\|_{H^{r_0}_g} > \eps \right) =0.
    \end{equation*}    
\end{corollary}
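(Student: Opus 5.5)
The plan is to reduce the random-initial-datum statement to the deterministic uniform one, \autoref{thm: main scaling}, by conditioning on $\mathcal{F}_0$. Since $\omega_0$ is $\mathcal{F}_0$-measurable and the Brownian motions $(W^k)_{k\in I}$ are independent of $\mathcal{F}_0$, pathwise uniqueness of \eqref{main_ito_cutoff} (\autoref{prop:global_well_cutoff}) together with the Yamada--Watanabe principle yields a measurable solution map $\Phi^n:B^{-2/p}_{g,2,p}\times C([0,T];\R^I)\to C([0,T];H^{r_0}_g)$ with $\omega^{n,\omega_0}=\Phi^n(\omega_0,W)$ a.s. Likewise, $\omega^{R,\det,\omega_0}=\Psi(\omega_0)$ for the deterministic solution map $\Psi$ of \eqref{main_det_cutoff}; $\Psi$ is continuous by \autoref{rem:stoch2det}, hence measurable.

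To make the solution-map step rigorous, I would first treat simple initial data $\omega_0=\sum_j \xi_j\one_{A_j}$ with $A_j\in\mathcal{F}_0$ and $\xi_j\in\B_p(M)$. Multiplying the equation by $\one_{A_j}$ (the stochastic integral commutes with $\mathcal{F}_0$-measurable indicators) and using uniqueness gives $\omega^{n,\omega_0}=\sum_j \one_{A_j}\omega^{n,\xi_j}$. For general $\omega_0\in L^0_{\mathcal{F}_0}(\Omega;\B_p(M))$, I would approximate by simple data and pass to the limit through the continuous dependence on initial data provided by the stochastic maximal regularity theory (\autoref{SMR_maximal_thm}). Alternatively, one can apply the abstract result directly in the form that solutions depend measurably on $(\omega_0,W)$. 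I expect this step to be the main obstacle. The difficulty is that the approximation should stay inside $\B_p(M)$, or at least inside $\B_p(M+1)$, which is harmless since $M$ is arbitrary. In addition, continuity in probability of $\omega_0\mapsto\omega^{n,\omega_0}$ must hold in $C([0,T];H^{r_0}_g)$ for the cutoff equation, which follows from the Lipschitz estimate \eqref{ineq:lipschitz_cutoff} via the standard stopped-difference argument.

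With the representation in hand, define for deterministic $\xi\in\B_p(M)$
\begin{equation*}
p_n(\xi):=\PP\Big(\sup_{t\in[0,T]}\|\Phi^n(\xi,W)_t-\Psi(\xi)_t\|_{H^{r_0}_g}>\eps\Big).
\end{equation*}
Independence of $W$ from $\mathcal{F}_0$, together with the freezing lemma for conditional expectations, gives
\begin{equation*}
\PP\Big(\sup_{t\in [0,T]}\|\omega^{n,\omega_0}_t-\omega^{R,\det,\omega_0}_t\|_{H^{r_0}_g}>\eps\,\Big|\,\mathcal{F}_0\Big)=p_n(\omega_0)\quad\text{a.s.}
\end{equation*}
Taking expectations, we obtain
\begin{equation*}
\PP\Big(\sup_{t\in[0,T]}\|\omega^{n,\omega_0}_t-\omega^{R,\det,\omega_0}_t\|_{H^{r_0}_g}>\eps\Big)=\EE[p_n(\omega_0)]\le\sup_{\xi\in\B_p(M)}p_n(\xi),
\end{equation*}
uniformly over all $\omega_0\in L^0_{\mathcal{F}_0}(\Omega;\B_p(M))$. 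By \autoref{thm: main scaling}, $\sup_{\xi\in\B_p(M)}p_n(\xi)\to0$, which concludes. The remaining point to check is that $p_n$ is measurable, which follows from the joint measurability of $\Phi^n$.
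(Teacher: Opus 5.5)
Your proposal is correct and follows the same route as the paper: condition on $\mathcal{F}_0$, use the independence of the noise from $\mathcal{F}_0$ to write the probability as $\EE[p_n(\omega_0)]\le\sup_{\xi\in\B_p(M)}p_n(\xi)$, and conclude with the uniform deterministic-datum convergence of \autoref{thm: main scaling}. The paper compresses the conditioning step into the phrase ``conditioning and the Markov property''; you justify that step explicitly through a jointly measurable solution map (via Yamada--Watanabe or approximation by simple initial data) and the freezing lemma, which adds rigour without changing the strategy.
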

\begin{proof}
    Let us fix all the parameters as in the statement and $\delta>0.$ Thanks to \autoref{thm: main scaling} there exists $n_0\in \N$ such that for each $n\geq n_0$
    \begin{equation}\label{property_used_cor}
        \sup_{\overline{\omega}_0 \in \B_p(M)} \PP\left( \sup_{t\in [0, T]}\|\omega^{n,\overline{\omega}_0}_t - \omega^{R,\det,\overline{\omega}_0}_{t}\|_{H^{r_0}_g} > \eps \right)<\frac{\delta}{2}. 
    \end{equation}
    In order to prove the claim, it is enough to check that, for each $n\geq n_0$ and $\mathcal{F}_0$ measurable initial datum $\omega_0\in \B_p(M)\ \mathbb{P}-a.s.$, it holds 
    \begin{align*}
        \PP\left( \sup_{t\in [0, T]}\|\omega^{n,\omega_0}_t - \omega^{R,\det,\omega_0}_t\|_{H^{r_0}_g} > \eps \right)<\frac{\delta}{2}.
    \end{align*}
    The latter immediately follows by conditioning with respect to $\mathcal{F}_0$, the Markov property, and relation \eqref{property_used_cor}. Indeed,
    \begin{align*}
        \PP\left( \sup_{t\in [0, T]}\|\omega^{n,\omega_0}_t - \omega^{R,\det,\omega_0}_t\|_{H^{r_0}_g} > \eps \right)&=\mathbb{E}\left[\PP\left( \sup_{t\in [0, T]}\|\omega^{n,\omega_0}_t - \omega^{R,\det,\omega_0}_{t}\|_{H^{r_0}_g} > \eps \big|\mathcal{F}_0\right)\right]\\ & = \mathbb{E}\left[\PP\left( \sup_{t\in [0, T]}\|\omega^{n,\overline{\omega}_0}_t - \omega^{R,\det,\overline{\omega}_0}_{t}\|_{H^{r_0}_g} > \eps \right)\bigg|_{\overline{\omega}_0=\omega_0}\right]\\ & <\frac{\delta}{2}.
    \end{align*}
\end{proof}
Now the proof of \autoref{thm_global} follows by \autoref{thm:main}, \autoref{cor_uniform_random_cond}, and a gluing procedure. 
\begin{proof}[Proof of \autoref{thm_global}]
\emph{Step 0: Preparation.} We start introducing some parameters. First let $4<p_\infty < p$ so that there exists $n_0>0$ such that $p=p_\infty + 1/n_0$. Next, for $i\geq 0$, let
\begin{align*}
    p_i&=p_\infty+\frac{1}{n_0+i},\quad  \gamma^i=-\frac{6}{2p_i+p_{i+1}},\quad r_0^i=-\frac{4}{p_i+p_{i+1}},\quad  r^i=-\frac{2}{p_{i+1}},
    \end{align*}
    \begin{align*}
    M_i&=M+\sum_{j=1}^i \frac{1}{2^j},\quad  \epsilon_i=\frac{\epsilon}{2^{i+1}}.
\end{align*}
By definition, $p_0=p$ and $\lim_{i\rightarrow +\infty} p_i=p_\infty,\ -\frac{1}{2}<r^i<r_0^i<\gamma^i<-\frac{2}{p_i}.$ Secondly, let $C_i$ the constant appearing in the Sobolev embedding of $H^{r^i}_g\hookrightarrow B^{-\frac{2}{p_{i+1}}}_{g,2,p_{i+1}}$ and $K$ the one appearing in $ B^{-\frac{2}{p_{0}}}_{g,2,p_{0}}\hookrightarrow H^{\gamma_0}_g$
. Without loss of generality we can assume $C_i\geq 1$ and define
\begin{align*}
    \delta_i=\frac{1}{C_i 2^{i+1}}.
\end{align*}
Let also set
\begin{align*}
    \tilde{M}_{0}=K M_0, \quad \tilde{M}_i=\frac{M_{i-1}}{C_{i-1}}+\frac{\delta_{i-1}}{2}\quad \text{for }i\geq 1,\quad \tilde{M}=\sup_{i\geq 0} \tilde{M}_i\leq (M+1)\vee KM.
\end{align*}
By \autoref{lem:dissipation}, since $\{\gamma_i\}_{i\geq 0}\subseteq [-\frac{2}{p_\infty}, -\frac{2}{p_0}]$, we can find a constant $\nu>0$ uniform in $i\geq 0$ and $\eta_i>0 $ such that for each initial condition $\omega_0\in H^{\gamma_i}_g$ such that $\|\omega_0\|_{H^{\gamma_i}_g}\leq \tilde{M}$, the unique solution $\omega^{\det}$ of \eqref{main_det} exists globally and satisfies
\begin{align}\label{crucial_estimate}
    e^{-\eta_i t}\|\omega^{\det}_t\|_{H^{r^i_0}_g}\lesssim_i \|\omega_0\|_{H^{\gamma_i}_g}.
\end{align}
Now we argue by induction to construct the time-dependent coefficients $\{\theta_k(t)\}_{k\in I}$.\\
\emph{Step 1: $i=0$.} By the previous step there exists $ \eta_0,\ R_0^0>0$ depending only on $\tilde{M},\ r^0_0, \gamma_0$ such that
\begin{align*}
    \sup_{t\geq 0} e^{\eta_0 t}\|\omega^{\det,0}_t\|_{H^{r^0_0}_g}\leq R_0^0, 
\end{align*}
where $\omega^{\det,0}$ is the unique \emph{global} solution of \eqref{main_det} given by \autoref{lem:dissipation} with viscosity $\nu$ and initial condition $\omega^0_0$. Let us define $T_0\geq 1$ such that
\begin{align*}
    e^{-\eta_0 T_0}R_0^0\leq \frac{M_0}{C_0}\qquad \text{and } R^0:=R^0_0+\delta_0.
\end{align*}
By \autoref{thm: main scaling} there exists $\{\theta_k^0\}_{k\in I}$ such that the unique \emph{global} $p_0$-solutions of \eqref{main_ito_cutoff} and \eqref{main_det_cutoff} with $r=r^0,\ R=R^0$ and noise intensity $\nu$, called respectively $\omega^{R^0},\ \omega^{R^0,\det} $, satisfy 
\begin{align*}
    \mathbb{P}\left(\sup_{t\in [0,T_0]}\|\omega^{R^0}_t- \omega^{R^0,\det}_t\|_{H^{r_0^0}_g}>\frac{\delta_0}{2}\right)\leq \epsilon_0.
\end{align*}
Moreover, due to \autoref{rmk:uniqueness} $\omega^{R^0,\det}=\omega^{\det,0}.$
In particular, introducing the events \begin{align*}
    A^0&:=\left\{\sup_{t\in [0,T_0]}\|\omega^{R^0}_t- \omega^{R^0,\det}_t\|_{H^{r_0^0}_g}\leq \frac{\delta_0}{2}\right\}\\
     B^0&:=\left\{\sup_{t\in [0,T_0]}\|\omega^{R^0}_t\|_{H^{r_0^0}_g}\leq R^0\right\}\\
     C^0&:=\left\{\|\omega^{R^0}_{T_0}\|_{H^{r_0^0}_g}\leq \tilde{M}_1\right\},
\end{align*}
it holds $A^0\subset B^0\cap C^0=:\Omega_0$. Consequently \begin{align*}
    \mathbb{P}(\Omega_0)\geq 1-\epsilon_0.
\end{align*}
On $\Omega_0$ the cutoff is never active on $[0,T_0]$ and \begin{align*}
    \|\omega^{R^0}_{T_0} \|_{H^{r_0^0}_g}\leq \tilde{M}_1,\quad \|\omega^{R^0}_{T_0} \|_{B^{-\frac{2}{p_1}}_{g,2,p_1}}\leq M_1\quad \text{ on }\Omega_0.
\end{align*}
In particular, calling 
\begin{align*}
    \omega^1_0:=\begin{cases}
        \omega^{R^0}_{T_0}\quad &\text{on }\Omega_0\\
        0\quad &\text{on }\Omega_0^c,
    \end{cases}
\end{align*}
it is an $\mathcal{F}_{T_0}$-measurable random variable and $\omega^1_0\in \B_{p_{1}}(M_1)\quad \mathbb{P}-a.s.$\\
\emph{Step 2: i=1.}  By the previous steps, there exists $\eta_1,\ R_0^1$ depending only on $\tilde{M}, \ r_0^1, \gamma_1$ such that
\begin{align*}
    \sup_{t\geq 0} e^{\eta_1 t}\|\omega^{\det,1}_{T_0+t}\|_{H^{r^1_0}_g}\leq R_0^1, 
\end{align*}
where $\omega^{\det,1}$ is the unique \emph{global} solution of \eqref{main_det} with viscosity $\nu$ starting at $T_0$ from the random initial condition $\omega^1_0$. Let us define $T_1\geq 1$ such that
\begin{align*}
    e^{-\eta_1 T_1}R_0^1\leq \frac{M_1}{C_1}\qquad \text{and } R^1:=R^1_0+\delta_1.
\end{align*}
By \autoref{cor_uniform_random_cond} there exists $\{\theta_k^1\}_{k\in I}$ such that the unique \emph{global} $p_1$-solutions of \eqref{main_ito_cutoff} and \eqref{main_det_cutoff} with $r=r^1,\ R=R^1$ and noise intensity $\nu$ starting at $T_0$ from the random initial condition $\omega^1_0$, called respectively $\omega^{R^1},\ \omega^{R^1,\det} $, satisfy 
\begin{align*}
    \mathbb{P}\left(\sup_{t\in [T_0,T_0+T_1]}\|\omega^{R^1}- \omega^{R^1,\det}\|_{H^{r_0^1}_g}>\frac{\delta_1}{2}\right)\leq \epsilon_1.
\end{align*}
Moreover, due to \autoref{rmk:uniqueness} $\omega^{R^1,\det}=\omega^{\det,1}.$ In particular, introducing the events \begin{align*}
    A^1&:=\left\{\sup_{t\in [T_0,T_0+T_1]}\|\omega^{R^1}_t- \omega^{R^1,\det}_t\|_{H^{r_0^1}_g}\leq \frac{\delta_1}{2}\right\}\\
     B^1&:=\left\{\sup_{t\in [T_0,T_0+T_1]}\|\omega^{R^1}_t\|_{H^{r_0^1}_g}\leq R^0\right\}\\
     C^1&:=\left\{\|\omega^{R^1}_{T_0+T_1}\|_{H^{r_0^1}_g}\leq \tilde{M}_2\right\},
\end{align*}
it holds $A^1\subset B^1\cap C^1=:\Omega_1^0$. Finally, calling $\Omega_1=\Omega_1^0\cap \Omega_0$, it holds \begin{align*}
    \mathbb{P}(\Omega_1)\geq 1-\epsilon_0-\epsilon_1.
\end{align*}
On $\Omega_1$ both the cutoffs of $\omega^{R^1}$ and $\omega^{R^0}$ are never active on $[0,T_0]$ and $[T_0,T_0+T_1]$ respectively. Moreover, it holds \begin{align*}
    \|\omega^{R^1}_{T_0+T_1} \|_{H^{r_0^1}_g}\leq \tilde{M}_2,\quad\|\omega^{R^1}_{T_0+T_1} \|_{B^{-\frac{2}{p_2}}_{g,2,p_2}}\leq M_2\quad \text{ on }\Omega_1.
\end{align*}
In particular, introducing
\begin{align*}
    \omega^2_0:=\begin{cases}
        \omega^{R^1}_{T_0+T_1}\quad &\text{on }\Omega_1\\
        0\quad &\text{on }\Omega_1^c,
    \end{cases}
\end{align*}
it is an $\mathcal{F}_{T_0+T_1}$-measurable random variable and $\omega^2_0\in \B_{p_{2}}(M_2)\quad \mathbb{P}-a.s.$\\
\emph{Step 3: Iteration.} Iterating the procedure above we find deterministic sequences of times $\{T_i\}_{i\geq 0}$, radii $\{R^i\}_{i\geq 0}$ and noise coefficients $\{\theta_k^i\}_{k\in I,\ i\geq 0}$ such that $T_i\geq 1,\ R^i>0$ and the $\{\theta_k^i\}_{k\in I}$ satisfy \autoref{hyp_noise} for each $i$. 
Let us furthermore introduce 
\begin{align*}
    \overline{T}_0=0,\quad \overline{T}_{i+1}=\overline{T}_i+T_i\quad\text{for each } i\geq 0.
\end{align*}
By the construction detailed above, we also find a sequence of random variables $\{\omega^i_0\}_{i\geq 0}$ such that 
\begin{align*}
\omega^i_0\in  \B_{p_{i}}(M_i)\quad \mathbb{P}-a.s.,\quad \omega^i_0\ \text{is }\mathcal{F}_{\overline{T}_i} \text{ measurable} 
\end{align*}
and processes $\{\omega^{R^i}\}_{i\geq 0}$ such that each $\omega^{R^i}$ is the unique \emph{global} $p_i$-solutions of \eqref{main_ito_cutoff} with $r=r^i,\ R=R^i$ and noise intensity $\nu$ starting at time $\overline{T}_i$ from the random initial condition $\omega^i_0$.
Finally, the construction above allows us to find a sequence of events 
$\{\Omega_i\}_{i\geq 0}$ such that 
\begin{align*}
    \Omega_{i+1}\subseteq \Omega_i,\ \mathbb{P}(\Omega_i)\geq 1-\sum_{j=0}^i \epsilon_j\geq 1-\epsilon.
\end{align*}
Therefore, calling $\overline{\Omega}=\bigcap_{i\geq 0} \Omega_i$, it holds
\begin{align*}
    \mathbb{P}(\overline{\Omega})\geq 1-\epsilon.
\end{align*}
By definition of the $\Omega_i,\ $, $\mathbb{P}-a.s.$ on $\Omega_i$ 
the cutoffs of $\omega^{R^0},\dots, \omega^{R^i}$ are never active on $[\overline{T}_0,\overline{T}_1],\dots,[\overline{T}_i,\overline{T}_{i+1}]$ respectively and \begin{align*}
    \omega^{R^k}_{\overline{T}_{k}}=\omega^{R^{k-1}}_{\overline{T}_{k}} \quad \text{on }\Omega_i\quad \forall k\leq i+1. 
\end{align*}
\emph{Step 4: Conclusion.} For $m=(\alpha,\beta),\ \alpha\in \Z^3_0,\ \beta\in \{1,2\}$, let us introduce the time dependent coefficients 
\begin{align*}
    \sigma_m(t,x)=\theta_m^j a_{\alpha,\beta} e^{i\alpha \cdot x}\quad t\in \left[\overline{T}_j,\overline{T}_{j+1}\right).
\end{align*}
By \autoref{prop:local_well_nonlinear} there exists a unique maximal local $p_\infty$ solution $(\omega,\tau)$ of \eqref{main_ito} with noise coefficients $\{\sigma_m(t)\}_{m\in I}$, noise intensity $\nu$ and initial condition $\omega_0.$ Since $p_i\geq p_\infty$ for each $i$, by uniqueness and the properties of the $\Omega_i$ recalled in previous step it holds 
\begin{align*}
    \one_{\Omega_i}\omega_t=\one_{\Omega_i}\omega^{R^i}_t \quad \text{for }t\in \left[\overline{T}_i, \overline{T}_{i+1}\right)\quad \mathbb{P}-a.s.
\end{align*}
Consequently 
\begin{align*}
    \mathbb{P}(\tau = +\infty)\geq \mathbb{P}(\overline{\Omega})\geq 1-\epsilon
\end{align*}
Finally, since the unique $p_\infty$ solution $\omega_t$ satisfies $\omega_0 \in B^{-\frac{2}{p}}_{g,2,p}$, then by compatibility of $L^p$-solutions \cite[Corollary 5.11]{agresti_nonlinear_2025}, $\omega$ is actually a $p-$solution which is global with probability $1-\eps$ and this completes the proof.
\end{proof}
\appendix
\section{Technical results}\label{app_technical_res}
\subsection{Proof of some Interpolation inequalities}\label{sec:interpolation}
\begin{proof}[Proof of \autoref{lemma_interpolation}]
  Relation \eqref{identification_p2} is easy since the $\rho_j$ have almost disjoint support. Relation \eqref{real_interpolation} is well-known to experts, and follows almost verbatim from the proof of \cite[Theorem 14.3.1 and Theorem 14.4.31]{Analysis3}.
  We conclude the proof by providing some details about the explicit constants in the interpolation estimates. 
  
  \textit{Step 1. (Equivalence of norms)} There exists a constant $C$ independent of everything for which 
  $$C^{-1}\|f\|_{B_{g,2,2}^r}\le \|f\|_{H_{g}^r}= C\|f\|_{B_{g,2,2}^r}$$
  We show the result for $r\ge 0$ and for a scalar $f$. By duality, it extends to every $r\in \R$ and the extension to a vector-valued function is trivial. 
  Recall that the partition of unity $(\chi, \rho)$ is made such that $\rho$ has support on $[1/2, 1]$. Obviously $\rho_j(k)\neq 0 \iff k\in (2^{-j-1}, 2^{j})$. Moreover, by the Plancherel identity 
  $\|\Delta_j f\|_{L^2}^2= \sum_{k\in \Z_0^3} \rho_j^2(k)f_k^2$, where $f_k = \brak{f, e^{ikx}}$ are the Fourier coefficients of $f$. Finally, there exists a constant $C$ such that $ 1 \le C\sum_{j\ge -1} \rho^2_j(k) \le C' $. 
  Thus we have
  \begin{align*}
      \|f\|^2_{H^r_g}&= \sum_{k\in \Z^3_0}  |k|^{2r}g^{r+1}(k) f_k^2 \\
      &\le C\sum_{k\in \Z^3_0}\sum_{j\ge -1} \rho^2_j(k) |k|^{2r}g^{r+1}(k) f_k^2\\
      &\le C \sum_{j\ge -1}  2^{2rj}g^{r+1}(2^j) \sum_{k\in \Z_0^3}\rho^2_j(k)f_k^2 \\
      &\le C\|f\|^2_{B^r_{g,2,2}}
  \end{align*}
  On the other hand, recalling again that on the support of $\rho_j(k) \neq 0$ requires $k\ge 2^{j-1}$, we have 
  \begin{align*}
      \|f\|^2_{B^r_{g,2,2}} &=  \sum_{j\ge -1} 2^{2rj}g^{r+1}(2^j) \sum_k \rho^2_j(k)f_k^2 \\
      &\le  2^{2r}\sum_{k} g^{r+1}(2k)|k|^{2r}f_k^2\sum_{j\ge -1} \rho_j^2(k)\\
       &\le  2^{2r}C'\sum_{k} g^{r+1}(2k)|k|^{2r}f_k^2
  \end{align*}
  The conclusion follows by noticing that $\sup_{k\ge 0}g(2k)/g(k) \le C$ by assumption.  \\
  \textit{Step 2. (Interpolation)}
  Let $s_\theta = \theta s_1 + (1-\theta) s_0$. We begin by showing 
  \begin{align}
      \|f\|_{B_{g,2,1}^{s_\theta}} \le C_\theta \|f\|^{\theta}_{B_{g,2,\infty}^{s_1}} \|f\|^{1-\theta}_{B_{g,2,\infty}^{s_0}}
      \end{align}
      The above follows by reiteration of real-interpolation (see e.g.\ \cite[Theorem 3.5.3]{MR482275}) and \eqref{real_interpolation}. For the reader's convenience, we provide some details. We have 
      \begin{align*}
           \|f\|_{B_{g,2,1}^{s_\theta}}& = \sum_{j\ge -1}2^{s_\theta j}g(2^j)^{\frac{s_\theta+1}{2}}\|\Delta_j f\|_{L^2} \\
           &= \sum_{j= -1}^N2^{s_\theta j}g(2^j)^{\frac{s_\theta+1}{2}}\|\Delta_j f\|_{L^2} + \sum_{j> N}2^{s_\theta j}g(2^j)^{\frac{s_\theta+1}{2}}\|\Delta_j f\|_{L^2} \\
           & \le \|f\|_{B^{s_0}_{g,2,\infty}} \sum_{j= -1}^N2^{(s_\theta - s_0)j}g(2^j)^{\frac{s_\theta-s_0}{2}} + \|f\|_{B^{s_1}_{g,2,\infty}} \sum_{j> N}2^{(s_\theta - s_1)j}g(2^j)^{\frac{s_\theta-s_1}{2}}
      \end{align*}
      Now, since $g$ is increasing, by $s_0 < s_\theta < s_1$ we have
      \begin{align*}
          \sum_{j= -1}^N2^{(s_\theta - s_0)j}g(2^j)^{\frac{s_\theta-s_0}{2}} &
          \le g(2^N)^{\frac{s_\theta -s_0}{2}}\sum_{j= -1}^N2^{(s_\theta - s_0)j} \le C_{\theta,0} g(2^N)^{\frac{s_\theta -s_0}{2}}2^{N(s_\theta - s_0)} \\ \sum_{j> N}2^{(s_\theta - s_1)j}g(2^j)^{\frac{s_\theta-s_1}{2}}  &\le g(2^N)^{\frac{s_\theta -s_1}{2}}\sum_{j> N}2^{(s_\theta - s_1)j}\le C_{\theta,1} g(2^N)^{\frac{s_\theta -s_1}{2}}2^{N(s_\theta - s_1)}, 
        \end{align*}
        where the constant $C_{\theta, 0}$ and $C_{\theta, 1}$ can be chosen independent of $N$. Now we look for $N$ such that 
        \begin{align*}
            \|f\|_{B^{s_0}_{g,2,\infty}}g(2^N)^{\frac{s_\theta -s_0}{2}}2^{N(s_\theta - s_0)} \sim \|f\|_{B^{s_1}_{g,2,\infty}} g(2^N)^{\frac{s_\theta -s_1}{2}} 2^{N(s_\theta - s_1)},
        \end{align*}
        i.e., setting $\beta:= 2^{N}\sqrt{g(2^N)}$ we choose $N$ (possibly depending on $f$) such that for some large but fixed constant $C$ (independent of $f$) it holds
        \begin{align*}
            C^{-1}\frac{\|f\|_{B^{s_0}_{g,2,\infty}}}{ \|f\|_{B^{s_1}_{g,2,\infty}}}\le  \beta^{s_1 - s_0} \le C\frac{\|f\|_{B^{s_1}_{g,2,\infty}}}{ \|f\|_{B^{s_0}_{g,2,\infty}}} 
        \end{align*}
        Noticing that $(s_\theta -s_0)/(s_1-s_0)=\theta$ we get
        \begin{align*}
              \|f\|_{B_{g,2,1}^{s_\theta}}&\le  C_{\theta,0} \beta^{s_\theta- s_0}\|f\|_{B_{g,2,\infty}^{s_0}}  + C_{\theta,1} \beta^{s_\theta- s_1}\|f\|_{B_{g,2,\infty}^{s_1}} \\          
              &\le 2C_{\theta}\|f\|_{B_{g,2,\infty}^{s_0}}^{1-\theta}\|f\|_{B_{g,2,\infty}^{s_1}}^{\theta}, 
        \end{align*}
        where $C_\theta = C \cdot \max\{C_{\theta_0}, C_{\theta_1}\}$.
        Finally, it is easy to see that, with constant one, it holds for every $r\in \R, p\in [1, \infty]$
        $$  \|f\|_{B_{g,2,\infty}^{r}} \le  \|f\|_{B_{g,2,p}^{r}} \le \|f\|_{B_{g,2,1}^{r}}. $$
        Finally, it is evident by direct inspection that the constant $C_\theta$ is uniformly bounded for $\theta$ varying in a compact subset of $(0,1)$. 
\end{proof}
\begin{proof}[Proof of \autoref{generation_semigroup}]
Linearity as well as closedness and \eqref{inequality_coercivity} trivially follow from the definitions of all the objects involved. In particular, to show \eqref{inequality_coercivity}, the first item in \autoref{hp_correction} is crucially employed. We are left to show that the operator is self-adjoint. By definition, it holds
\begin{align*}
    \langle A u,v\rangle_{X_0}=\langle u,A v\rangle_{X_0}\quad \forall u,v\in D(A).
\end{align*}  
This means that $A$ is symmetric.
It remains to show that $D(A^*)=D(A)$ and $\forall u\in D(A),$ $ A^*u=Au$.
By definition \begin{align*}
    D(A^*)=\{u \in X_0:\quad & F: D(A)\subseteq X_0\rightarrow \mathbb R,\quad F(v)=\langle u,Av\rangle_{X_0}\\ & \text{ has a linear bounded extension on } X_0 \}.
    \end{align*} For each $u\in D(A^*),\ F(v)=\langle u,Av\rangle_{X_0}=\langle f_u,v\rangle_{X_0}$ therefore $A^*u=f_u$. In particular, $\forall u\in D(A^*)\ \langle u,Av\rangle_{X_0}=\langle A^*u,v\rangle_{X_0}$. Thanks to the fact that $A$ is symmetric, we have $D(A)\subseteq D(A^*)$. Given now $v\in D(A^*)$, let $ f_v=A^*v\in X_0$. Writing $f_v$ in the Fourier series, it is easy to check that there exists a unique $w\in D(A)$ such that $Aw=f$ as an equality in $X_0$. Therefore, it holds $Aw=f_v=A^*v$ as an equality in $X_0$. Similarly, for each $z\in X_0$, there exists a unique $s_z\in D(A)$ such that $As_z=z$. Therefore $\langle z,w-v\rangle_{X_0}=\langle As_z,w-v\rangle_{X_0}=\langle s_z,Aw-A^*v\rangle_{X_0}=0$ thanks to the fact that $A$ is symmetric. Since $z$ is arbitrary, then $v=w$ and the claim follows.
\end{proof}

\subsection{Stochastic Maximal Regularity Toolbox}\label{sec:av_theory}
In the body of the works, we have made use of various stochastic maximal regularity results. We recall a simplified version of the main theorems in \cite{agresti_nonlinear_2022} which we have employed in our proofs. We take two Hilbert spaces $X_1 \hookrightarrow X_0$ with dense and compact embedding and a third Hilbert space $\U$ on which we model our noise. We define $X_\theta = (X_0, X_1)_\theta$ as the complex interpolation space and $X^{\operatorname{Tr}}_p:= [X_0, X_1]_{(1-1/p, p)}$ as a real interpolation space.   
We consider some operators $-A: X_1 \rightarrow X_0$ and $B: [0,\infty)\times X_{1}\to \mathscr{L}_2(\U, X_{1/2})$ where $B$ is strongly measurable, and a $\U$-cylindrical Brownian motion $W_t$. We are interested in the maximal regularity properties of the Cauchy problem 
\begin{equation}\label{abstract_pb}
\mathrm{d}u - Au\, \mathrm{d}t = f\,\mathrm{d}t + F(u)\, \mathrm{d}t + B u\, \mathrm{d}W_t,\qquad  u(0)=u_0.
\end{equation}
Let $T>0$ and $I_T:= (0, T)$. The first definition concerns the maximal regularity of the linear part of the problem 
\begin{definition}
    Let $p>2$. We write $(A, B) \in \mathcal{SMR}^\bullet_p$ if for every $f\in L^p(I_T\times \Omega, X_0)$, $u_0 \in L^p_{\F_0}(\Omega; X^{\operatorname{Tr}}_p)$ there exists a strong solution $u$ to the linear Cauchy problem 
    $$\mathrm{d}u - Au\mathrm{d}t = f_t\mathrm{d}t  + B(u)\mathrm{d}W_t,\qquad  u(0)=u_0.$$
    such that $u \in  L^p(\Omega ; H^{\theta, p}(I_T; X_{1-\theta}))$ for every $\theta \in [0, 1/2)$ and for all stopping times $\tau : \Omega \rightarrow I_T$ it holds 
    $$\|u\|_{L^p( \Omega; H^{\theta, p}(I_T; X_{1-\theta}))} \lesssim \|f\|_{L^p(I_T \times \Omega ; X_0)} + \|u_0\|_{L^p_{\F_0}(\Omega; X^{\operatorname{Tr}}_p)}$$
    and in particular 
    \begin{equation}\label{smr-inequality}
        \EE\left[\sup_{t\in[0, T]} \|u_t\|^p_{X^{\operatorname{Tr}}_p}\right]  +\expt{\int_0^t\|u_s\|^p_{X_1}\mathrm{d}s}  \lesssim \expt[]{\int_0^t \|f_s\|^p_{X_0}ds} + \expt{\|u_0\|^p_{ X^{\operatorname{Tr}}_p}}.
    \end{equation}
\end{definition}

\begin{hypothesis}\label{SMR_nonlinear_hp}
        Let $X_0$ be a separable Hilbert space and suppose that there is a $\lambda_0 \ge 0$ such that $\lambda_0 - A$ is sectorial on $X_0$ and set $X_1 = D(-A)$. Let $p\ge 2$ and assume that the mapping 
    $F:X_1 \rightarrow X_0$
    satisfies 
    \begin{align}
        \|F(u)- F(v)\|_{X_0} \le L\left( 1 + \|u\|_{X_\beta}^\rho + \|v\|_{X_\beta}^\rho\right)\|u-v\|_{X_\theta}
    \end{align}
    where $\beta \in (1-1/p,  1)$, $\theta \in [\beta, 1)$ and $\rho>0$ satisfy
    \begin{align}\label{criticality_cond}
        \rho \left(\beta - 1 + \frac{1}{p}\right) + \theta \le 1
    \end{align}
    And the spaces $X_\beta = (X_0, X_1)_\beta$ 
\end{hypothesis}
\begin{theorem}\label{SMR_maximal_thm}
    Let $p\ge 2$ and assume that \autoref{SMR_nonlinear_hp} is satisfied and that $(A,B)\in \mathcal{SMR}^\bullet_p$. Assume also that $u_0 \in L^\infty_{\F_0}(\Omega; X^{\operatorname{Tr}}_p)$. Then, for every $f\in L^p(\Omega\times I_T ; X_0)$ there exists a unique maximal $p$-solution $(u, \sigma)$ to \eqref{abstract_pb} with $\sigma>0$ almost surely and for every stopping time $\tau < \sigma$
    $u \in L^p(\Omega; H^{\theta, p}(I_\tau; X_{1-\theta}))\cap L^p(\Omega; C(\bar I_\tau; X^{\operatorname{Tr}}_p)).$
\end{theorem}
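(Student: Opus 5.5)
This is a local well-posedness statement of the type recalled from \cite{agresti_nonlinear_2022}, so I would not reprove it from scratch. Instead I would reduce it to the abstract critical-spaces theory for quasilinear stochastic evolution equations. The plan has three steps.

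\textbf{Step 1: linear a priori estimate on a stochastic interval.} Given the structural assumption $(A,B)\in\mathcal{SMR}^\bullet_p$, I would first truncate the nonlinearity. Fix a smooth cutoff $\Theta_n$ of $\|u\|_{C([0,t];X^{\operatorname{Tr}}_p)}+\|u\|_{L^p(0,t;X_1)}$ at level $n$, and set $F_n(u)=\Theta_n(u)F(u)$. By \autoref{SMR_nonlinear_hp}, the Hölder inequality in time, and the criticality condition \eqref{criticality_cond}, I expect an estimate of the form
\begin{align*}
\|F_n(u)-F_n(v)\|_{L^p(0,T;X_0)}\le C_n\,T^{\varepsilon}\,\|u-v\|_{\mathrm{MR}} + \text{(small)}\,\|u-v\|_{\mathrm{MR}},
\end{align*}
where $\mathrm{MR}=L^p(0,T;X_1)\cap C([0,T];X^{\operatorname{Tr}}_p)$. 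The precise interpolation step is the embedding
\begin{align*}
H^{\theta,p}(X_{1-\theta})\cap L^p(X_1)\hookrightarrow L^{p/(\dots)}(X_\beta)
\end{align*}
together with the relation between $X_\beta$ and $X_{1-1/p}$, which holds because $\beta>1-1/p$. In the critical case (equality in \eqref{criticality_cond}) the factor $T^\varepsilon$ disappears. There I would instead use smallness of $\|u\|_{L^p(0,T;X_\beta)}$-type quantities on short random intervals, following the argument of \cite{agresti_nonlinear_2022}.

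\textbf{Step 2: fixed point and localisation.} I would run a contraction argument in $L^p(\Omega;\mathrm{MR})$ for the truncated problem, using the $\mathcal{SMR}^\bullet_p$ estimate \eqref{smr-inequality} with forcing $f+F_n(u)$. Since $u_0\in L^\infty_{\mathcal F_0}(\Omega;X^{\operatorname{Tr}}_p)$, the initial term is bounded uniformly. Uniqueness of each truncated solution gives consistency across levels, so the solutions glue along the stopping times $\sigma_n=\inf\{t:\|u\|_{\mathrm{MR}(0,t)}\ge n\}$. Setting $\sigma=\lim_n\sigma_n$ yields a maximal solution with $\sigma>0$ a.s., because continuity of $t\mapsto\|u\|_{\mathrm{MR}(0,t)}$ at $0$ forces $\sigma_n>0$. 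Uniqueness among local $p$-solutions follows by applying the same contraction estimate to the difference of two solutions up to the minimum of their lifetimes. The regularity $L^p(\Omega;H^{\theta,p}(I_\tau;X_{1-\theta}))$ is inherited from the $\mathcal{SMR}^\bullet_p$ definition for any $\tau<\sigma$.

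\textbf{Main obstacle.} The hard part is the critical case of \eqref{criticality_cond}, where no power of $T$ is gained. My first attempt would be to split $u=z+w$. Here $z$ solves the linear problem with data $u_0$ and $f$, and its $L^p(0,t;X_\beta)$ norm is small for small (random) $t$ by absolute continuity. The remainder $w$ starts from zero, so its $\mathrm{MR}$ norm is small on short intervals. This is exactly the mechanism in \cite{agresti_nonlinear_2022}, and I would cite it for the technical details.
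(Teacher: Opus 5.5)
The paper gives no proof of this theorem: it is quoted from \cite{agresti_nonlinear_2022}, so reducing to that theory is exactly the paper's route. Your subcritical sketch is the standard one: large-level truncation, H\"older in time giving the factor $T^{\varepsilon}$ with $\varepsilon=1-\rho(\beta-1+\frac1p)-\theta>0$, contraction via \eqref{smr-inequality}, and gluing along $\sigma_n$, with $u_0\in L^\infty_{\mathcal{F}_0}(\Omega;X^{\operatorname{Tr}}_p)$ ensuring $\sigma_n>0$ for large $n$.

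\textbf{The step that fails.} It is the last sentence of your Step 2. The $\mathcal{SMR}^\bullet_p$ estimate for the truncated problems gives $u\in L^p(\Omega;H^{\theta,p}(I_{\tau};X_{1-\theta}))\cap L^p(\Omega;C(\bar I_{\tau};X^{\operatorname{Tr}}_p))$ only for $\tau\le\sigma_n$ with $n$ fixed, i.e.\ along a localizing sequence. For an arbitrary stopping time $\tau<\sigma$ you only know $\tau(\omega)\le\sigma_{n(\omega)}(\omega)$ with a random index. That gives pathwise regularity but no moment bound, and no argument can give more. Take $B=0$, $f=0$, and $F(u)=\langle u,e\rangle^2e$ with $e$ a normalized eigenvector of a self-adjoint $A$. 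This satisfies \autoref{SMR_nonlinear_hp} with $\rho=1$ and $\theta=\beta$ slightly above $1-\frac1p$. With $u_0=m_0e$ and $m_0$ large, the solution is $u_t=m(t)e$, where $m$ solves a Riccati equation and satisfies $m(t)\gtrsim(\sigma-t)^{-1}$ up to a deterministic blow-up time $\sigma<T$. Now let $\xi\in(0,\sigma)$ be $\mathcal{F}_0$-measurable with $\mathbb{E}[\xi^{-p}]=\infty$. Then $\tau=\sigma-\xi$ is a stopping time with $\tau<\sigma$ and $\mathbb{E}\sup_{t\le\tau}\|u_t\|^p_{X^{\operatorname{Tr}}_p}=\infty$. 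So that clause of the statement is only correct along a localizing sequence (or pathwise), and your proof cannot establish it as written.

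\textbf{The critical case.} As you note, Steps 1--2 do not cover equality in \eqref{criticality_cond}. There the truncation at a large level $n$ gives a Lipschitz constant of order $n^\rho$ with no small factor. Moreover, the $C([0,t];X^{\operatorname{Tr}}_p)$ part of your truncated norm does not vanish as $t\downarrow0$.

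Your remedy, the splitting $u=z+w$, makes $\|z\|_{L^{r}(0,t;X_\beta)}$ small, with $r=(\beta-1+\frac1p)^{-1}$, but only pathwise. This cannot be made uniform in $\omega$ even for bounded data. Already for $B=0$, $f=0$, one has that $\sup\{\|e^{\cdot A}u_0\|_{L^{r}(0,t;X_\beta)}:\|u_0\|_{X^{\operatorname{Tr}}_p}\le1\}$ does not tend to $0$ as $t\downarrow0$; test with high-frequency eigenvectors, since this norm is scale invariant.

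To close a fixed point in $L^p(\Omega)$, this random smallness must first be converted into a truncation at a \emph{small} level $\lambda$ of an $L^{r}(0,t;X_\beta)$-type norm, or a stopping time for it. Then $\sigma>0$ a.s.\ follows from the pathwise vanishing of that norm as $t\downarrow0$. Since you defer this to \cite{agresti_nonlinear_2022}, as the paper does, it is not a gap relative to the paper. Still, the large-level scheme of Step 1 should not be presented as covering that case.
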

The following result concern the blow-up of maximal $L^p$-solutions and can be found in \cite[Theorem 5.2]{agresti_nonlinear_2025}
\begin{theorem}\label{SMR_blowup+instreg}
    Suppose that \eqref{SMR_nonlinear_hp} and the hypotesis of \autoref{SMR_maximal_thm} hold and let $(u, \sigma)$ be a maximal $L^p$-solution of \eqref{abstract_pb}, then
    \begin{align}
        &\PP\Big( \sigma< \infty, \lim_{t \rightarrow \sigma }u_t \mbox{ exists in }{X^{Tr}_p} <\infty\Big) =0\\ 
        &\PP\Big( \sigma< \infty, \sup_{t\in [0, \sigma)}\|u_t\|_{X^{Tr}_p} + \int_0^\sigma \|u_t\|^p_{X_1} \mathrm{d}t < \infty \Big) =0 \\
        &\PP\Big( \sigma< \infty, \sup_{t\in [0, \sigma)}\|u_t\|_{X^{Tr}_p} <\infty\Big) =0 \quad \text{if \eqref{criticality_cond} holds with strict inequality}
    \end{align}
\end{theorem}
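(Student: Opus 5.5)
The statement to prove is \autoref{SMR_blowup+instreg}, the blow-up criteria for maximal $L^p$-solutions of \eqref{abstract_pb}. I would not reprove the abstract stochastic maximal regularity machinery. Instead I would reduce to the localization and extension arguments behind \autoref{SMR_maximal_thm}, proving the three criteria in order of increasing strength. Throughout, $(u,\sigma)$ is the maximal $p$-solution, and $(\sigma_n)$ is a localizing sequence with $\sigma_n\uparrow\sigma$ on which $u\in L^p(\Omega;H^{\theta,p}(I_{\sigma_n};X_{1-\theta}))\cap L^p(\Omega;C(\bar I_{\sigma_n};X^{\operatorname{Tr}}_p))$.

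\emph{First criterion.} On the event $\mathcal{O}=\{\sigma<\infty,\ \lim_{t\to\sigma}u_t \text{ exists in } X^{\operatorname{Tr}}_p\}$, suppose for contradiction that $\PP(\mathcal{O})>0$. I would set $u_\sigma:=\lim_{t\uparrow\sigma}u_t$ on $\mathcal{O}$ and $u_\sigma:=0$ otherwise; this is $\mathcal{F}_\sigma$-measurable. After a further localization by $\{\|u_\sigma\|_{X^{\operatorname{Tr}}_p}\le N\}$ it is bounded. I then solve \eqref{abstract_pb} starting at the stopping time $\sigma$ from $u_\sigma$, using the time-shifted version of \autoref{SMR_maximal_thm}; the $\mathcal{SMR}^\bullet_p$ property is invariant under random time shifts via the strong Markov structure of $W$. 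This gives a local solution on $[\sigma,\sigma')$ with $\sigma'>\sigma$ a.s. Concatenating it with $u$ produces a local $p$-solution that lives strictly beyond $\sigma$ on $\mathcal{O}$, which contradicts maximality. The concatenated process satisfies the integral equation because the paths are continuous in $X^{\operatorname{Tr}}_p$ and the stochastic integrals add.

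\emph{Second criterion.} On $\{\sigma<\infty,\ \sup_{t<\sigma}\|u_t\|_{X^{\operatorname{Tr}}_p}+\int_0^\sigma\|u\|_{X_1}^p<\infty\}$ I show that the limit exists, and then the first criterion applies. I localize with the stopping times
\[
\tau_N=\inf\Big\{t:\ \|u_t\|_{X^{\operatorname{Tr}}_p}+\int_0^t\|u\|_{X_1}^p\,\mathrm{d}s\ge N\Big\}\wedge\sigma .
\]
By \autoref{SMR_nonlinear_hp} and interpolation, $\|u\|_{X_\beta}\lesssim\|u\|_{X^{\operatorname{Tr}}_p}^{1-\alpha}\|u\|_{X_1}^{\alpha}$ with $\alpha=(\beta-1+1/p)/(1/p)$. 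Using \eqref{criticality_cond}, this shows $F(u)\in L^p(I_{\tau_N};X_0)$ with norm bounded in terms of $N$. Regarding $F(u)+f$ as a forcing term, the maximal regularity estimate \eqref{smr-inequality} extends $u$ as an element of $C(\bar I_{\tau_N};X^{\operatorname{Tr}}_p)$ up to $\tau_N$ inclusive. Since $\tau_N=\sigma$ on the event for large $N$, the limit exists there.

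\emph{Third criterion.} This is the main obstacle: when \eqref{criticality_cond} is strict, boundedness of $\sup_t\|u_t\|_{X^{\operatorname{Tr}}_p}$ alone must be shown to control $\int_0^\sigma\|u\|^p_{X_1}$. I would use the same stopping times with only the sup-norm. Strictness gives a Young splitting
\[
\|F(u)\|_{X_0}\le C_{\varepsilon,N}+\varepsilon\|u\|_{X_1}
\]
on $[0,\tau_N)$. Plugging this into \eqref{smr-inequality} on $[0,\tau_N\wedge T]$ and absorbing the $\varepsilon$-term bounds $\mathbb{E}\int_0^{\tau_N}\|u\|_{X_1}^p$ by $C_N(1+T)$. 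The absorption needs the left-hand side to be finite a priori, which I would ensure by an additional truncation with the $\sigma_n$ and then letting $n\to\infty$. Hence $\int_0^\sigma\|u\|^p_{X_1}<\infty$ a.s.\ on the event, and the second criterion concludes.
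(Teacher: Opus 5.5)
The paper does not prove this statement; it quotes it from \cite[Theorem 5.2]{agresti_nonlinear_2025}, so your argument has to stand on its own. Your treatment of the second and third criteria is the standard one and is sound in substance:
\begin{itemize}
\item interpolation between $X^{\operatorname{Tr}}_p$ and $X_1$, together with \eqref{criticality_cond}, gives $\|F(u)\|_{X_0}\lesssim_N 1+\|u\|_{X_1}$ on $[0,\tau_N)$;
\item linear maximal regularity with forcing $\one_{[0,\tau_N)}(F(u)+f)$ then gives existence of $\lim_{t\uparrow\sigma}u_t$;
\item under strict inequality, the Young splitting and absorption (after truncating with $\sigma_n$) give $\int_0^\sigma\|u\|_{X_1}^p\,\mathrm{d}t<\infty$.
\end{itemize}

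The genuine gap is in the first criterion, on which your second criterion also rests. Gluing the solution restarted at $\sigma$ onto $u$ gives a local $p$-solution beyond $\sigma$ only if the glued process has a localizing sequence that eventually exceeds $\sigma$ on $\mathcal{O}$. That requires $\int_0^\sigma\|u_t\|^p_{X_1}\,\mathrm{d}t<\infty$ there, and hence $F(u)\in L^p(0,\sigma;X_0)$ and $Bu\in L^2(0,\sigma;\mathscr{L}_2(\U,X_{1/2}))$. The same integrability is needed just to let $t\uparrow\sigma$ in the integral equation. Convergence of $u_t$ in $X^{\operatorname{Tr}}_p$ controls only the sum of the drift and the stochastic integral, not each term separately. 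Nothing in your argument supplies this integrability on $\mathcal{O}$. In the critical case (equality in \eqref{criticality_cond}), supplying it is exactly the non-trivial content of the first criterion. Your own third step extracts such integrability from a sup-bound only under strict inequality.

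The missing idea is to restart not at $\sigma$ but at the times $\sigma_n\uparrow\sigma$ of a localizing sequence, where $u\in L^p(0,\sigma_n;X_1)$ is already known. You then need a local existence result whose existence time is bounded below, with high probability, uniformly for initial data in a small $X^{\operatorname{Tr}}_p$-neighbourhood of a compact set. This is local continuity with respect to the initial data, as in the Agresti--Veraar theory. On $\mathcal{O}$ the trajectory $\{u_t:t\le\sigma\}$ has compact closure and $u_{\sigma_n}\to u_\sigma$ in $X^{\operatorname{Tr}}_p$. Hence the restarted solutions, glued to $u$ on $[0,\sigma_n]$, survive past $\sigma$ on a set of positive probability, which contradicts maximality.

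For the second criterion, glue directly on its event, where the integrability is part of the hypothesis, rather than passing through the first criterion.

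A smaller point: restarting at a stopping time needs $\mathcal{SMR}^\bullet_p$ on stochastic intervals. For time-dependent $B$ (as in the paper, where $\theta$ depends on $t$), this does not follow from the strong Markov property of $W$ alone. It is available in the cited framework, but you should invoke it rather than assert it.
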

\begin{proposition}\label{SMR_linear_prop}
    Let $A$ as defined in \autoref{Defintition_operator} with ground space $H^{-2}_g$, i.e. \begin{align*}
        A: D(A)=H^0_g &\rightarrow H^{-2}_g
    \end{align*}
   and 
   $$
B: [0,T]\times  H^0_g \rightarrow \mathscr{L}_2(\mathcal{U},H^{-1}_g),
   $$
defined as $B(t)\omega= (\LL_{\sigma_k(t)}\omega)_{k\in I}$. Then 
   $(A, B)\in \mathcal{SMR}^\bullet_p$, for every $p\ge 2$. Moreover, it follows that for every $\nu\ge 0$ also $(A+\nu\Delta, B)\in \mathcal{SMR}_p^\bullet$ with the same ground space, where the hidden constant in \eqref{smr-inequality} might depend on $\nu$. 
   \end{proposition}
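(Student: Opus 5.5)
The plan is to obtain $(A,B)\in\mathcal{SMR}^\bullet_p$ by perturbation. The starting point is the deterministic part, $(A,0)\in\mathcal{SMR}^\bullet_p$. The noise $B$ will then be treated as a lower-order perturbation, and the smallness needed for this is the same mechanism as in \autoref{lem:noise_estimate} combined with \eqref{L2_interpolation}. Finally, $A$ is replaced by $A+\nu\Delta$.

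\emph{Step 1 (unperturbed problem).} By \autoref{generation_semigroup} and \autoref{semigroup}, $-A$ is self-adjoint and positive on the Hilbert space $X_0=H^{-2}_g$, and admits a bounded $H^\infty$-calculus of angle zero. The interpolation spaces are identified as $X_{1/2}=H^{-1}_g\simeq H^{-1}_{\div}$ and $X^{\rm Tr}_p=B^{-2/p}_{g,2,p}$ by \eqref{fractional_powers}--\eqref{trace_spaces}. The van Neerven--Veraar--Weis theorem then gives $(A,0)\in\mathcal{SMR}^\bullet_p$ for all $p\ge2$. The point is that $A$ is diagonal in Fourier variables, so it has a bounded $H^\infty$-calculus on the Hilbert space $X_0$. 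The same argument applies verbatim to $A+\nu\Delta$, whose symbol $-|m|^2(g(|m|)+\nu)$ is again negative, diagonal and comparable to that of $A$. Hence $A+\nu\Delta$ has the same domain $X_1=H^0_g$ and the same interpolation scale. Only the constants change, which is why the constants in \eqref{smr-inequality} depend on $\nu$.

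\emph{Step 2 (lower-order bound on the noise).} For $\omega\in H^0_g$ and every $t$, \autoref{lem:noise_estimate} gives
\begin{equation*}
\|B(t)\omega\|^2_{\mathscr{L}_2(\mathcal U,H^{-1}_g)}=\sum_{k\in I}\|\LL_{\sigma_k(t)}\omega\|_{H^{-1}}^2\le 3\|\omega\|_{L^2}^2 .
\end{equation*}
We then use the Fourier-side inequality $|m|^2\le \varepsilon |m|^2 g(|m|)$ for $|m|\ge m_\varepsilon$. It holds because $g(r)\to\infty$ by \autoref{hp_correction}, and the finitely many low modes are bounded by $C_\varepsilon|m|^{-2}$. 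This yields, for every $\varepsilon>0$,
\begin{equation*}
\|B(t)\omega\|_{\mathscr{L}_2(\mathcal U,X_{1/2})}\le \varepsilon\|\omega\|_{X_1}+C_\varepsilon\|\omega\|_{X_0}.
\end{equation*}
In fact the lower-order term is $C_\varepsilon\|\omega\|_{H^{-1}}$, and $\|\omega\|_{H^{-1}}\lesssim \|\omega\|_{X_{1/2}}$. Strong measurability of $t\mapsto B(t)$ is immediate, since $\theta(t)$ is piecewise constant by \autoref{hyp_noise}. The constants are uniform in $t$, because $\|\theta(t)\|_{\ell^2}$ is fixed by the normalisation $Z_n$.

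\emph{Step 3 (perturbation).} I would apply the perturbation theorem for stochastic maximal regularity of Agresti--Veraar. Its hypothesis is that the perturbation satisfies $\|B\omega\|_{\mathscr L_2(\mathcal U,X_{1/2})}\le\varepsilon\|\omega\|_{X_1}+C\|\omega\|_{X_0}$ with $\varepsilon$ small compared to the $\mathcal{SMR}$ constant of $(A,0)$; Step 2 lets $\varepsilon$ be arbitrarily small. Concretely, I would write the solution as $u=\mathcal R(f+\cdot)+\mathcal S(Bu)$ with the solution operators of Step 1. The $\varepsilon$-part of $\mathcal S(Bu)$ is absorbed into the left-hand side of \eqref{smr-inequality}. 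The $C_\varepsilon\|u\|_{X_0}$ term is controlled by splitting $[0,T]$ into short intervals, on which $\|u\|_{L^p(X_0)}$ is small relative to $\sup_t\|u\|_{X^{\rm Tr}_p}$, and then iterating. Stopping-time localisation is handled as in the abstract theory. The case $A+\nu\Delta$ follows identically from Step 1.

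The main obstacle is the uniformity of the smallness in Step 3. The perturbation theorem requires $\varepsilon$ below a threshold depending on the $\mathcal{SMR}$ constant of $A$, and on $p$, in $\mathscr{L}_2(\mathcal U,X_{1/2})$. I will verify that the threshold is independent of $t$ and of the frequency localisation $n$, which is needed later in \autoref{prop:global_well_cutoff}. This should hold because the bound in \autoref{lem:noise_estimate} depends only on $\|\theta\|_{\ell^2}$. If the direct absorption fails for large $p$, I would fall back on the $L^2$-coercivity route: the Itô computation in \autoref{lem: H-1 estimate_local_nonlinear} shows stochastic parabolicity in $H^{-1}$, and that would be upgraded to $L^p$ via the Hilbert-space variational criterion.
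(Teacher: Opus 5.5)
Your proposal is correct and follows essentially the paper's route. You obtain $(A,0)\in\mathcal{SMR}^\bullet_p$ from the bounded $H^\infty$-calculus of angle zero of $-A$ on $H^{-2}_g$. You then apply the Agresti--Veraar perturbation theorem pointwise in time, using the bound $\|B(t)\omega\|_{\mathscr L_2(\mathcal U,H^{-1})}\le\eps\|\omega\|_{H^0_g}+C_\eps\|\omega\|_{H^{-1}}$ obtained from \autoref{lem:noise_estimate} and $g\to\infty$, exactly as in \autoref{SMR_perturbation_estimate}.

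The only deviation concerns $\nu\Delta$. You put it into the generator, which is legitimate because the symbol $-|m|^2(g(|m|)+\nu)$ is comparable to that of $A$, so the domain and the $H^\infty$-calculus are unchanged. The paper instead treats $\nu\Delta$ as one more $\eps$-small perturbation, via the first half of \autoref{SMR_perturbation_estimate}. Both ways work.
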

   \begin{proof}
   The proof is done by following the proof in \cite[Theorem A.1]{agresti_anomalous_2024}, which is an application of the perturbation method developed in \cite{agresti_stochastic_2024}, provided $-A$ has a bounded $H^\infty$ calculus of angle zero, and the following lemma is applied pointwise in time. 
 \end{proof}

\begin{lemma}\label{SMR_perturbation_estimate}
Suppose that $\|(\sigma_k)_{k\in I}\|_{\ell^2(L^\infty(\T^3))}\le N$. Then for every $\eps>0$ there exists $C_\eps$ depending only on $\eps$ and $g$ such that it holds 
     \begin{equation}\label{noise_est}
        \|\Delta \omega\|_{H^{-2}_g} + \|B \omega\|_{\L^2( \ell^2, H^{-1})} \le \eps \|\omega\|_{H^{0}_g} + C_\eps \|\omega\|_{H^{-1}}.
    \end{equation}
\end{lemma}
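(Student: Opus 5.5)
The inequality splits into two independent bounds. The first is $\|\Delta\omega\|_{H^{-2}_g}\le \eps\|\omega\|_{H^0_g}+C_\eps\|\omega\|_{H^{-1}}$. The second is $\|B\omega\|_{\L^2(\ell^2,H^{-1})}\le \eps\|\omega\|_{H^0_g}+C_\eps\|\omega\|_{H^{-1}}$. Both will be reduced to a single spectral interpolation inequality that exploits the growth condition on $g$, namely \autoref{hp_correction}\eqref{hp_correction3}.

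\emph{Step 1: the key inequality.} We prove that for every $\eps>0$ there is $C_\eps$, depending only on $\eps$ and $g$, with
\begin{equation*}
\|\omega\|_{L^2}\le \eps\|\omega\|_{H^0_g}+C_\eps\|\omega\|_{H^{-1}}.
\end{equation*}
The argument is mode by mode. In Fourier variables, $\|\omega\|_{L^2}^2=\sum |m|^0|\hat\omega_m|^2$, $\|\omega\|_{H^0_g}^2=\sum g(|m|)|\hat\omega_m|^2$ and $\|\omega\|_{H^{-1}}^2=\sum|m|^{-2}|\hat\omega_m|^2$. Since $g(r)\to\infty$, we may choose $N_\eps$ such that $g(|m|)\ge \eps^{-2}$ for $|m|>N_\eps$. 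On these high modes $|\hat\omega_m|^2\le \eps^2 g(|m|)|\hat\omega_m|^2$. On the finitely many modes with $|m|\le N_\eps$ we have $|\hat\omega_m|^2\le N_\eps^2|m|^{-2}|\hat\omega_m|^2$. Summing the two ranges and taking square roots gives the inequality with $C_\eps=N_\eps$.

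\emph{Step 2: the Laplacian term.} By definition of $H^{-2}_g$,
\begin{equation*}
\|\Delta\omega\|_{H^{-2}_g}^2=\sum |m|^{-4}g(|m|)^{-1}|m|^4|\hat\omega_m|^2=\sum g(|m|)^{-1}|\hat\omega_m|^2 .
\end{equation*}
This is bounded by $g(1)^{-1}\|\omega\|_{L^2}^2$, using that $g$ is increasing and $|m|\ge 2\pi>1$. Step 1 then bounds it by $\eps\|\omega\|_{H^0_g}+C_\eps\|\omega\|_{H^{-1}}$. A sharper bound is available, since $g(|m|)^{-1}\le \eps^2 g(|m|)$ for large $|m|$, but the crude version suffices.

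\emph{Step 3: the noise term.} We argue as in \autoref{lem:noise_estimate}, replacing the explicit coefficients by the general bound $\|(\sigma_k)\|_{\ell^2(L^\infty)}\le N$. For each $k$, using $\LL_{\sigma_k}\omega=\nabla\times(\sigma_k\times\omega)$ for divergence-free fields, we get
\begin{equation*}
\|\LL_{\sigma_k}\omega\|_{H^{-1}}=\|\Pi Q(\sigma_k\times\omega)\|_{L^2}\le\|\sigma_k\|_{L^\infty}\|\omega\|_{L^2}.
\end{equation*}
Summing the squares over $k$ yields $\|B\omega\|_{\L^2(\ell^2,H^{-1})}\le N\|\omega\|_{L^2}$. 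Applying Step 1 with $\eps/N$ in place of $\eps$ closes the estimate; here the constant also depends on $N$, which is harmless because $N$ is fixed under \autoref{hyp_noise}.

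\emph{Main obstacle.} The only delicate point is the identity $\LL_\sigma\omega=\nabla\times(\sigma\times\omega)$ together with the passage $\|(-\Delta)^{-1/2}\nabla\times X\|_{L^2}=\|\Pi QX\|_{L^2}$. This identity uses $\div\sigma=\div\omega=0$, and it is what removes the derivative falling on $\sigma$, which is large at high frequencies. It is already established in \autoref{lem:noise_estimate}, so the proof reduces to Step 1 and bookkeeping.
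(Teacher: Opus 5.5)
Your proposal is correct and follows essentially the paper's argument. Both use a frequency split at a level beyond which $g$ is large, together with the bound $\sum_k\|\LL_{\sigma_k}\omega\|_{H^{-1}}^2\le N^2\|\omega\|_{L^2}^2$ from the computation in \autoref{lem:noise_estimate}; you merely route both terms through the single inequality $\|\omega\|_{L^2}\le\eps\|\omega\|_{H^0_g}+C_\eps\|\omega\|_{H^{-1}}$, and you track squares more carefully than the paper does. Your remark that the constant must also depend on $N$ is right and unavoidable (rescale the $\sigma_k$); the paper's proof has the same dependence, hidden in the constant $C$ it takes from \autoref{lem:noise_estimate}.
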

\begin{proof}
    We have 
     $$\|\Delta \omega\|_{H^{-2}_g} = \|\omega\|_{H^{0}_{g^{-1}}} \le \eps \|\omega\|_{H^{0}_g} + C_\eps\|\omega\|_{H^{-1}}$$
   where the last inequality follows by noticing that for any $M>1$, since $g$ is increasing, it holds
    $$\|\omega\|^2_{H^{0}_{g^{-1}}}= \sum_{k}g^{-1}(|k|)|\omega_k|^2 \le {g^{-1}(1)}M^2\sum_{|k|\le M}|k|^{-2}|\omega_k|^2 + g^{-2}(M)\sum_{|k|>M}g^{}(|k|)|\omega_k|^2.$$
    Thus, the estimate follows by choosing $M$ large enough.
   By a similar argument and thanks to \autoref{lem:noise_estimate}, for every $M\ge 1$ we have
    \begin{align}\label{ito_corr_estimate}
        \|B\omega \|_{\L^2(\ell^2, H^{-1})} = \sum_{k\in I}\|\LL_{\sigma_k} \omega\|_{H^{-1}}^2 \le C \|\omega\|_{H^0}^2 \le C g^{-1}(M) \|\omega\|_{H^{0}_g} + C M^2 \|\omega \|_{H^{-1}}. 
    \end{align}
    So, we can choose $M$ depending on $C$ and $g$, to satisfy the desired inequality with $\eps$ independent on $g $. 
\end{proof}

\begin{acknowledgements}
    The authors are members of GNAMPA (INdAM). 
    AA acknowledges support from INdAM through the GNAMPA 2026 project
 ``Fluidodinamica stocastica: irregolarità, trasporto e fenomeni di regolarizzazione''.
    EL has received funding from the European Research Council (ERC) under the European Union’s Horizon 2020 research and innovation programme (grant agreement No. 949981).
\end{acknowledgements}


\end{document}